\documentclass[11pt]{article}
\usepackage[utf8]{inputenc}
\usepackage[T1]{fontenc}
\usepackage[english]{babel}
\usepackage{indentfirst}
\usepackage{latexsym}
\usepackage[usenames]{color}
\usepackage[colorlinks=true, bookmarksnumbered=true, bookmarksopen=true,
bookmarksopenlevel=3, pdfstartview=FitH, linkcolor=cyan, pdfmenubar=true,
pdftoolbar=true, bookmarks=true,citecolor=cyan, urlcolor=magenta,
filecolor=cyan,menucolor=black,plainpages=false,pdfpagelabels]{hyperref}
\usepackage[lmargin=2cm,tmargin=2cm,bmargin=2cm,rmargin=2cm]{geometry}
\usepackage{amsbsy, amsmath,amsfonts,amssymb,amsthm}
\usepackage{times}
\usepackage{graphicx}
\usepackage{amsmath}
\usepackage{amsfonts}
\usepackage{amssymb}
\setcounter{MaxMatrixCols}{30}

\newcommand{\N}{\mathbb{N}}
\newcommand{\C}{\mathbb{C}}

\newcommand{\R}{\mathbb{R}}

\newcommand{\G}{\mathbb{G}}
\newcommand{\Z}{\mathbb{Z}}
\renewcommand{\mod}{\operatorname{mod}}

\newcommand{\abs}[1]{\left\lvert#1\right\rvert}
\newcommand{\norm}[1]{\left\Vert#1\right\Vert}
\newcommand{\dual}[2]{\left\langle{#1},{#2}\right\rangle}

\newtheorem{theorem}{Theorem}[section]

\newtheorem{proposition}[theorem]{Proposition}

\newtheorem{lemma}[theorem]{Lemma}
\newtheorem{remark}[theorem]{Remark}
\newtheorem{example}[theorem]{Example}
\numberwithin{equation}{section}

\begin{document}

\title{Estimates for entropy numbers of multiplier operators of multiple series}
\author{Sergio A. Córdoba\thanks{email: sergiocordoba@unicauca.edu.co}, 
J\'essica Milar\'e\thanks{email: jessymilare@gmail.com} and
Sergio A. Tozoni\thanks{email: tozoni@unicamp.br}}
\date{}
\maketitle

\bigskip

\noindent
$^{\ast}$ \quad {\em Departamento de Vías y Transporte, Universidad del Cauca, Calle 5 Nº 4-70, Popayán - Cauca, CEP 190003, Colombia}\\
$^{\dagger, \ddagger}$ \quad {\em Instituto de Matem\'{a}tica, Universidade Estadual de Campinas, 
Rua S\'{e}rgio Buarque de Holanda 651, Campinas-SP, CEP 13083-859, Brazil}

\bigskip

\begin{abstract}
 The asymptotic behavior for entropy numbers of general Fourier multiplier operators of multiple series with respect to an abstract complete orthonormal system $\{\phi_{\textup{\textbf{m}}}\}_{\textup{\textbf{m}}\in \N^{d}_{0}}$ on a probability space and bounded in $L^{\infty}$, is studied. The orthonormal system 
can be of the type 
$\phi_{\textup{\textbf{m}}}(\textup{\textbf{x}})=\phi_{m_{1}}^{(1)}(x_{1})\cdots \phi_{m_{d}}^{(d)}(x_{d})$,
where each $\left\{\phi_{l}^{(j)}\right\}_{l\in \N_{0}}$ is an orthonormal system, that can be different for each $j$, for example, it can be a Vilenkin system, a Walsh system on a real sphere or the trigonometric system on the unit circle.
General upper and lower bounds for the entropy numbers are established by using Levy means of norms constructed using the orthonormal system. These results are applied to get upper and lower bounds for entropy numbers of specific multiplier operators, 
which generate, in particular cases, sets of finitely and infinitely differentiable functions, in the usual sense and in the dyadic sense.
It is shown that these estimates have order sharp in various important cases.

\bigskip

{\small \bigskip\noindent\textbf{MSC2020:} 	41A46, 42C10, 47B06}

{\small \medskip\noindent\textbf{Keywords:} Vilenkin series, multipliers operators, entropy numbers, approximation theory.}

\end{abstract}

\section{Introduction}


In \cite{Bordin, Kushpel11111, Kushpell, Stabile}, the asymptotic behavior of entropy numbers of multiplier operators was studied, using different techniques. Estimates were obtained for entropy numbers of sets of finitely and infinitely differentiable functions and sets of analytic functions, on homogeneous spaces and on the torus. In this paper, we continue these studies considering now general Fourier multiplier operators of multiple series with respect to an abstract complete orthonormal system.




Recently, much attention has been devoted to the study of entropy numbers of different sets of functions. This problem has a long history and some fundamental problems in this area are still open. 
Estimates for entropy numbers of different operators and embeddings between function spaces of the types Besov and Sobolev with mixed smoothness into Lebesgue have been studied \cite{ Kushpel2021,Mayera,Mieth,Temlyakov1,Wang}.
In the papers \cite{Kushpell, Stabile} sharp estimates were obtained for entropy numbers of sets of infinitely differentiable functions and of analytic functions, on two-points homogeneous spaces and on the torus, in several cases.
We do not know other papers where studies of this type have been carried out.
Our approach is based on estimates for Levy means of norms constructed using an orthonormal system of multiple functions on a probability space.
To prove the results for general multiplier operators we make use of deep results from Banach space geometry in Pisier \cite{Pisier} and in 
the paper of Pajor and Tomczak-Jaegermann \cite{Pajor}.
For our applications we need results on the amount of points of $\mathbb{Z}^d$ contained in a closed Euclidean ball of $\mathbb{R}^d$.
We consider the study on the classical Gauss circle problem in Huxley \cite{Hu} and the study on the three-dimensional sphere problem in  
Heath-Brown \cite{HB}.
A quality of this method is to enable the study of entropy numbers of sets of infinitely differentiable and analytic functions, allowing to obtain sharp estimates in terms of order, in several cases, which has not been possible, until now, with the use of other methods.
In particular we get lower and upper bounds for entropy numbers of classes of finitely and infinitely differentiable functions, in the usual sense and in the dyadic sense. The estimates in our applications have order sharp in various important cases.

Consider two Banach spaces $X$ and $Y$. The norm of $X$ will be denoted by $\norm{\cdot}$ or $\norm{\cdot}_{X}$ and the closed unit ball $\lbrace x\in X: \norm{x}\leq 1\rbrace$  by $B_{X}$. Let $A$ be a compact subset of $X$. The $n$-th entropy number $e_{n}(A,X)$ is defined as the infimum of all positive $\epsilon$ such that there exist $x_{1},\ldots,x_{2^{n-1}}$ in $X$ satisfying $ A\subset \cup_{k=1}^{2^{n-1}}(x_{k}+\epsilon B_{X})$, that is,
\[
e_{n}(A,X)=\inf \biggl\{ \epsilon>0: A\subset \bigcup_{k=1}^{2^{n-1}}\bigl(x_{k}+\epsilon B_{X}\bigl) \textup{ for some } x_{1},\ldots,x_{2^{n-1}}\in X \biggl\}.
\]
If $T\in\mathcal{L}(X,Y)$ is a compact operator, the $n$-th entropy number $e_{n}(T)$ is defined as $e_{n}(T)=e_{n}\left(T(B_{X}),Y\right)$.

Let $\N$ denote the set of positive integers and $\N_{0}$ the set of non-negative integers. For $d\in\N$ and $\textup{\textbf{k}}\in \N^{d}_{0}$, let $\vert \textbf{k}\vert =\left(k_{1}^{2}+\cdots+ k_{d}^{2}\right)^{1/2}$ and $\vert \textbf{k}\vert_{*} =\max_{1\leq j\leq d}  k_{j}$. Given $l,N\in\N_{0}$ we define $A_{l}=\left\{\textup{\textbf{k}}\in\N^{d}_{0}: \abs{\textup{\textbf{k}}}\leq l \right\}$, $A^{*}_{l}=\left\{\textup{\textbf{k}}\in\N^{d}_{0}: \abs{\textup{\textbf{k}}}_{*}\leq l \right\}$, $A_{-1}=A^{*}_{-1}=\emptyset$. Let $\left(\Omega_{1}, \mathcal{A}_{1}, \mu_{1}\right), \ldots, \left(\Omega_{d}, \mathcal{A}_{d}, \mu_{d}\right)$ be  probability spaces, $\Omega= \Omega_{1}\times \cdots \times \Omega_{d}$ and let $\mathcal{A}=\mathcal{A}_{1}\times \cdots\times \mathcal{A}_{d}$ and $\mu=\mu_{1}\times \cdots \times \mu_{d}$ be the product $\sigma$-algebra and the product measure, respectively.  Suppose $\{\phi_{\textup{\textbf{m}}}\}_{\textup{\textbf{m}}\in \N^{d}_{0}}$ is a complete orthonormal set of $L^{2}(\Omega, \mathcal{A}, \mu)$, of real-valued functions, uniformly bounded in $L^{\infty}(\Omega, \mathcal{A}, \mu)$. Let $\mathcal{H}_{l}=\left[\phi_{\textup{\textbf{k}}}: \textup{\textbf{k}}\in A_{l}\setminus A_{l-1}\right ]$ be the linear space generated by the functions $\phi_{\textup{\textbf{k}}}$ with $\textup{\textbf{k}} \in A_{l}\setminus A_{l-1}$ and let $d_{l}=\dim \mathcal{H}_{l}$, $\mathcal{T}_{N}=\bigoplus_{l=0}^{N}\mathcal{H}_{l}$, $\mathcal{H}=\bigcup_{N=1}^{\infty}\mathcal{T}_{N}$. Analogously we define $\mathcal{H}_{l}^{*}$, $d_{l}^{*}$, $\mathcal{T}_{N}^{*}$ and $\mathcal{H}^{*}=\mathcal{H}$. Given a real function $\lambda$ defined on the interval $[0,\infty)$ we consider the sequences $\Lambda=\{\lambda_{\textup{\textbf{k}}}\}_{\textup{\textbf{k}}\in\N^{d}_{0}}$ and $\Lambda^{*}=\{\lambda^{*}_{\textup{\textbf{k}}}\}_{\textup{\textbf{k}}\in \N_{0}^{d}}$ where $\lambda_{\textup{\textbf{k}}}=\lambda(\abs{\textup{\textbf{k}}})$, $\lambda_{\textup{\textbf{k}}}^{*}=\lambda(\abs{\textup{\textbf{k}}}_{*})$ and the linear operators associated with these sequences $\Lambda, \Lambda^{*}: \mathcal{H}\to \mathcal{H}$ defined for $f=\sum_{\textup{\textbf{k}}\in\N^{d}_{0}}a_{\textup{\textbf{k}}}\phi_{\textup{\textbf{k}}}\in \mathcal{H}$ by $\Lambda(f)=\sum_{\textup{\textbf{k}}\in\N^{d}_{0}} \lambda_{\textup{\textbf{k}}}a_{\textup{\textbf{k}}}\phi_{\textup{\textbf{k}}}$, $\Lambda^{*}(f)=\sum_{\textup{\textbf{k}}\in\N^{d}_{0}} \lambda^{*}_{\textup{\textbf{k}}}a_{\textup{\textbf{k}}}\phi_{\textup{\textbf{k}}}$. If $\Lambda$ and $\Lambda^{*}$ are bounded from $L^{p}(\Omega)$ to $L^{q}(\Omega)$ we also denote the extensions on $L^{p}(\Omega)$ by $\Lambda$ and $\Lambda^{*}$.

In the present paper, we study estimates for entropy numbers of multiplier operators of type $\Lambda$ and $\Lambda^{*}$.  The multiplier operators of types $\Lambda$ and $\Lambda_{*}$ were studied in \cite{Sergio, Stabile}.
In particular, we consider orthonormal systems $\{\phi_{\textup{\textbf{m}}}\}_{\textup{\textbf{m}}\in \N^{d}_{0}}$ of the type 
$\phi_{\textup{\textbf{m}}}(\textup{\textbf{x}})=\phi_{m_{1}}^{(1)}(x_{1})\cdots \phi_{m_{d}}^{(d)}(x_{d})$,
where each $\left\{\phi_{l}^{(j)}\right\}_{l\in \N_{0}}$ can be a different orthonormal system, for example, it can be a Vilenkin system, a Walsh system on a real sphere or the trigonometric system on the circle (see Remark \ref{examples}).


Let $\Lambda^{(1)}=\{\lambda_{\textbf{k}}\}_{\textbf{k}\in\N^{d}_{0}}$,  $\Lambda^{(1)}_{*}=\left\{\lambda^{*}_{\textbf{k}}\right\}_{\textbf{k}\in\N^{d}_{0}}$, where $\lambda(t)=t^{-\gamma}(\log_{2}t)^{-\xi}$ for $t>1$ and $\lambda(t)=0$ for $0\leq t\leq 1$, with $\gamma>0$, $\xi\geq 0$, and let $\Lambda^{(2)}=\left\{\lambda_{\textbf{k}}\right\}_{\textbf{k}\in\N^{d}_{0}}$,  $\Lambda^{(2)}_{*}=\left\{\lambda^{*}_{\textbf{k}}\right\}_{\textbf{k}\in\N^{d}_{0}}$, where $\lambda(t)=e^{-\gamma t^{r}}$, with $\gamma, r>0$. Let $U_{p}$ denote the closed unit ball in $L^{p}(\Omega)$.  
If $\{\phi_{\textup{\textbf{m}}}\}$ is the trigonometric system on the torus $\mathbb{T}^d$,
$\Lambda^{(1)}U_{p}$ and $\Lambda^{(1)}_{*}U_{p}$ are sets of finitely differentiable functions, in particular, are Sobolev-type classes if $\xi=0$, and $\Lambda^{(2)}U_{p}$ and $\Lambda^{(2)}_{*}U_{p}$ are sets of infinitely differentiable functions if $0<r<1$ and of analytic functions if $r\geq 1$ (see \cite{Stabile}).
If $\{\phi_{\textup{\textbf{m}}}\}$ is the multiple Walsh system on $[0,1]^d$,
$\Lambda^{(1)}U_{p}$ and $\Lambda^{(1)}_{*}U_{p}$ are sets of finitely differentiable functions and $\Lambda^{(2)}U_{p}$ and $\Lambda^{(2)}_{*}U_{p}$ are sets of infinitely differentiable functions, in the dyadic sense (see \cite{Sergio}).

For ease of notation we will write $a_{n}\gg b_{n}$ for two sequences if $a_{n}\geq C b_{n}$ for $n\in \N$, $a_{n}\ll b_{n}$ if $a_{n}\leq C b_{n}$ for $n\in \N$, and $a_{n}\asymp b_{n}$ if $a_{n}\ll b_{n}$ and $a_{n}\gg b_{n}$. For $a\in\N$ and $x\in\R$ such that $a\leq x<a+1$, we denote $[x]=a$.

The following estimates are the results of our applications and they are proved in Section 5.

\begin{theorem}\label{aplicacionesentropiafinitas1}
Consider $1\leq p, q\leq \infty$. 
If $\gamma> d/2$ and $\xi\geq 0$, then for all $n\in \N$,
	\begin{equation}\label{aplicacionfinitaentropia2}
	e_{n}\left(\Lambda^{(1)}U_{p},L^{q}\right)\ll n^{-\gamma/d}(\log_{2}n)^{-\xi}\left\{
	\begin{array}{ll}
q^{1/2},&\textbf{$2\leq p\leq \infty$, $q<\infty$},\\
	(\log_{2}n)^{1/2},&\textbf{$2\leq p\leq \infty$, $q=\infty$},
	\end{array}
	\right.
	\end{equation}
	and
	\begin{equation}\label{aplicacionfinitaentropia3}
	e_{n}\left(\Lambda^{(1)}U_{p},L^{q}\right)\gg n^{-\gamma/d}(\log_{2}n)^{-\xi}\left\{
	\begin{array}{ll}
	1,&\textbf{$ p< \infty$, $q>1$,}\\
	(\log_{2}n)^{-1/2},&\textbf{$p< \infty$, $q=1$},\\
	(\log_{2}n)^{-1/2},&\textbf{$ p= \infty$, $q>1$},\\
	(\log_{2}n)^{-1},&\textbf{$p= \infty$, $q=1$}.
	\end{array}
	\right.
	\end{equation}
	If $\gamma> d$ and $\xi\geq 0$, then for all $n\in \N$
	\begin{equation}\label{applicationfinite4}
	e_{n}\left(\Lambda^{(1)}U_{p},L^{q}\right)\ll n^{-\gamma/d}(\log_{2}n)^{-\xi}\left\{
	\begin{array}{ll}
q^{1/2}(p-1)^{-1/2},&\textbf{$1< p\leq 2$, $2\leq q<\infty$},\\
	q^{1/2}(\log_{2}n)^{1/2},&\textbf{$p=1$, $2\leq q<\infty$},\\
	(p-1)^{-1/2}(\log_{2}n)^{1/2},&\textbf{$1<p\leq 2$, $q=\infty$},\\
	\log_{2}n,&\textbf{$p=1$, $q=\infty$}.
	\end{array}
	\right.
	\end{equation}
	\end{theorem}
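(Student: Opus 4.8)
The plan is to derive Theorem~\ref{aplicacionesentropiafinitas1} from the general upper and lower bounds for the entropy numbers of the multiplier operators $\Lambda$ and $\Lambda^{*}$ proved in the previous sections, by specializing them to the function $\lambda(t)=t^{-\gamma}(\log_2 t)^{-\xi}$. The only external input needed is the order of growth of the dimensions $\dim\mathcal{T}_N=|A_N|$, where $A_N=\{\mathbf{k}\in\N_0^d:\ |\mathbf{k}|\le N\}$ is the set of lattice points of $\N_0^d$ contained in the Euclidean ball of radius $N$. The estimates on the number of integer points in a Euclidean ball---Huxley~\cite{Hu} for the circle problem when $d=2$, Heath-Brown~\cite{HB} when $d=3$, and elementary counting when $d=1$ or $d\ge 4$---give $\dim\mathcal{T}_N\asymp N^d$ for $N\ge 1$, hence $d_l\asymp l^{d-1}$; for the present theorem only the order $N^d$ is used, the sharper error terms being needed for the companion estimates concerning $\Lambda^{(2)}$.

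For the upper bounds I would, for each $n$, fix a cutoff $N=N(n)$ with $|A_N|\asymp n$, i.e.\ $N\asymp n^{1/d}$, and split $\Lambda^{(1)}$ into the part $\Lambda^{(1)}_{\le N}$ acting on $\mathcal{T}_N$ and the tail $\Lambda^{(1)}_{>N}$; by the subadditivity $e_{m+n-1}(S+T)\le e_m(S)+e_n(T)$ it suffices to treat each part with about $n/2$ balls. Since $\lambda$ is decreasing, on $\mathcal{T}_N$ the operator $\Lambda^{(1)}$ is comparable to the multiple $\lambda(N)\asymp n^{-\gamma/d}(\log_2 n)^{-\xi}$ of the identity (the geometric mean of $\lambda(|\mathbf{k}|)$ over $|\mathbf{k}|\le N$ has the same order), so the general upper estimate reduces this part to the entropy numbers of the identity $(\mathcal{T}_N,\|\cdot\|_p)\to(\mathcal{T}_N,\|\cdot\|_q)$; it is here that the machinery of Pisier~\cite{Pisier} and Pajor--Tomczak-Jaegermann~\cite{Pajor}, through the Levy means of the $L^q$ and the dual $L^{p'}$ norms on such polynomial spaces, produces exactly the factors $q^{1/2}$ (for $q<\infty$), $(\log_2 n)^{1/2}$ (for $q=\infty$), $(p-1)^{-1/2}\asymp(p')^{1/2}$ (for $1<p\le 2$), and their endpoint forms $q^{1/2}(\log_2 n)^{1/2}$ and $\log_2 n$ when $p=1$. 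The tail is handled by decomposing the frequencies $|\mathbf{k}|>N$ into dyadic blocks $|\mathbf{k}|\asymp 2^s$, on each of which $\Lambda^{(1)}$ acts as $\asymp 2^{-s\gamma}s^{-\xi}$; distributing a geometrically decreasing entropy budget over $s$ and summing the block contributions of the general estimate produces a series dominated by its first term, of order $\lambda(N)$ times the same $p,q$-factors, whose convergence is precisely what forces $\gamma>d/2$ when $2\le p\le\infty$ and $\gamma>d$ when $1\le p\le 2$.

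For the lower bounds~\eqref{aplicacionfinitaentropia3} I would apply the general lower estimate with the same cutoff $N\asymp n^{1/d}$. Restricting $\Lambda^{(1)}$ to the $L^2$-orthogonal annulus $\bigoplus_{N/2<l\le N}\mathcal{H}_l$, which has dimension $\asymp n$ and on which $\Lambda^{(1)}$ is comparable to $\lambda(N)$ times the identity, and then restricting further to a subspace of proportional dimension on which the relevant $L^p$ and $L^q$ norms are equivalent to $\|\cdot\|_{L^2}$ (a Levy-mean construction, using $\|\cdot\|_{L^2}\le\|\cdot\|_{L^q}$ when $q\ge 2$ and the general estimate's treatment of $1\le q<2$), reduces the problem to the entropy number of a Euclidean ball of dimension $\asymp n$, which has order $1$; the logarithmic losses $(\log_2 n)^{-1/2}$ and $(\log_2 n)^{-1}$ when $q=1$ and/or $p=\infty$ are the price of passing to such subspaces and come, once more, from the degeneration of the Levy means appearing in the general lower estimate.

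I expect the main obstacle to be the uniform bookkeeping of the constants rather than any isolated hard step: one must carry the Levy means of the $L^p$- and $L^q$-norms on $\mathcal{T}_N$ and on the dyadic blocks through the general theorems with explicit control of their behaviour as $p\to 1$ or $q\to\infty$, and then distribute the entropy budget $n=\sum_s n_s$ over the blocks so that the resulting series converges with the correct exponent under the two thresholds $\gamma>d/2$ and $\gamma>d$. Checking that $\lambda$ satisfies the monotonicity and regularity hypotheses of the general theorems and that $\Lambda^{(1)}U_p$ is a compact subset of $L^q$ in each of the listed ranges is routine once these thresholds are in place.
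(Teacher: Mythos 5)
Your treatment of the lower bound \eqref{aplicacionfinitaentropia3} and of the upper bound \eqref{aplicacionfinitaentropia2} for $2\le p\le\infty$ is essentially the paper's route: the lower bound comes from the general volume/Urysohn lower estimate (Theorem \ref{teoremainferioressentropia}) applied with $N\asymp n^{1/d}$, and the upper bound comes from the general upper estimate (Theorem \ref{teoremasuperioresentropia}), whose internal structure is exactly your head-plus-dyadic-tail decomposition with Levy means controlling each block; the only real work is verifying $\chi_k\asymp k^{-\gamma/d}(\log_2 k)^{-\xi}$ and $\eta+bn\asymp k$, which is the bookkeeping you anticipate.

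For \eqref{applicationfinite4}, however, there is a genuine gap. You assert that the same Levy-mean machinery ``produces'' the factor $(p-1)^{-1/2}\asymp (p')^{1/2}$ for $1<p\le 2$, but the Levy-mean bound (Theorem \ref{teorema-media-1}) and the general upper Theorem \ref{teoremasuperioresentropia} are only available for $2\le p\le\infty$: the volume/covering argument controls the identity on $B^n_{(2)}$, and for $p<2$ the inclusion $U_p\subset U_2$ goes the wrong way, so no choice of entropy budget over dyadic blocks rescues the argument. The paper instead obtains \eqref{applicationfinite4} by a different mechanism: it imports the Kolmogorov width estimate $d_n(\Lambda^{(1)}U_2,L^q)$ from the earlier $n$-width paper, passes to Gel'fand widths $d^n(\Lambda^{(1)}U_{q'},L^2)$ by duality (which is where $(p-1)^{-1/2}=(q'-1)^{-1/2}$ enters), converts widths into entropy numbers via Carl's inequality, and then factorizes $\Lambda^{(1)}=R\circ S$ with $S:L^p\to L^2$ and $R:L^2\to L^q$ each carrying half the smoothness, concluding with the multiplicativity \eqref{multiplicative}. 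This also explains the threshold: $\gamma>d$ is needed so that each half-smoothness factor satisfies $\gamma/2>d/2$, not because some tail series over dyadic blocks fails to converge, as your sketch suggests. Without the duality/factorization step (or some substitute), your plan does not yield \eqref{applicationfinite4}.
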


\begin{theorem}\label{teoremainfinitasentropia}
Consider $1\leq p, q\leq \infty$.
 Let $\gamma$, $r\in \R$ and suppose that $0<r\leq 1$ if $d\geq 4$, $0<r\leq 76/208$ if $d=2$ and $0<r\leq 11/16$ if $d=3$. Then for all $n\in\N$, we have that
	\begin{equation}\label{ecuacioninfinitasentropia2}
	e_{n}\left(\Lambda^{(2)}U_{p},L^{q}\right)\gg e^{-\mathcal{C}n^{r/(d+r)}}\left\{
	\begin{array}{ll}
	1,&\textbf{$ p<\infty$, $ q>1$},\\
	(\log_{2}n)^{-1/2},&\textbf{$ p< \infty$, $q=1$},\\
	(\log_{2}n)^{-1/2},&  p=\infty,q>1,\\
	(\log_{2} n)^{-1},& \textbf{$ p=\infty$, $q=1$},
	\end{array}
	\right.
	\end{equation}
	and
	\begin{equation}\label{ecuacioninfentropia}
	e_{n}\left(\Lambda^{(2)}U_{p},L^{q}\right)\ll e^{-\mathcal{C}n^{r/(d+r)}}\left\{
	\begin{array}{ll}
	q^{1/2},&\textbf{$ 2\leq p\leq\infty$, $q<\infty$},\\
	(\log_{2}n)^{1/2},&\textbf{$ 2\leq p\leq\infty$, $ q=\infty$},
	\end{array}
	\right.
	\end{equation}
	where
	\[
	\mathcal{C}=\gamma^{d/(d+r)}\left( \frac{(d+r)2^{d-1}d\Gamma(d/2)(\ln 2)}{r\pi^{d/2}}\right)^{r/(d+r)}.
	\]
\end{theorem}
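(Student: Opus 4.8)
The plan is to specialize the general upper and lower estimates for $e_{n}(\Lambda U_{p},L^{q})$ obtained in the previous sections to the multiplier $\Lambda^{(2)}$ given by $\lambda(t)=e^{-\gamma t^{r}}$, and then to carry out the resulting optimization. The first ingredient is an asymptotic count of the dimensions involved. Since the portion of the unit ball of $\R^{d}$ in the nonnegative octant has volume $a_{d}=2^{-d}\pi^{d/2}/\Gamma(d/2+1)=\pi^{d/2}/\bigl(2^{d-1}d\,\Gamma(d/2)\bigr)$, the Gauss circle estimate of Huxley \cite{Hu} for $d=2$, the three-dimensional sphere estimate of Heath-Brown \cite{HB} for $d=3$, and the classical lattice-point estimate for $d\ge4$ give
\[
\dim\mathcal{T}_{N}=\#\{\textbf{k}\in\N_{0}^{d}:\abs{\textbf{k}}\le N\}=a_{d}N^{d}+O\!\left(N^{\theta_{d}}\right),\qquad d_{l}=a_{d}d\,l^{d-1}+O\!\left(l^{\theta_{d}}\right),
\]
with $\theta_{2}=131/208$, $\theta_{3}=21/16$, and $\theta_{d}=d-2$ for $d\ge4$. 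Moreover, since $r\le1$, $\lambda$ is essentially constant on each block: $\lambda(l-1)/\lambda(l)=e^{\gamma(l^{r}-(l-1)^{r})}\le e^{\gamma r}$, so $\lambda_{\textbf{k}}\asymp\lambda(l)=e^{-\gamma l^{r}}$ on $\mathcal{H}_{l}$, and $\Lambda^{(2)}$ may be treated as the block multiplier with symbol $\lambda(l)$ on $\mathcal{H}_{l}$.

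For the upper bound \eqref{ecuacioninfentropia} I would set $\varepsilon=e^{-\gamma N^{r}}$, truncate $f\in\Lambda^{(2)}U_{p}$ at frequency level $N$ — the tail having $L^{q}$-norm $\ll\lambda(N)=\varepsilon$ by the monotonicity of $\lambda$ and the uniform boundedness of $\{\phi_{\textbf{k}}\}$ — and then cover the image of each block $\mathcal{H}_{l}$, $l\le N$, to accuracy $\varepsilon$. By the finite-dimensional entropy estimates underlying the general upper bound (the bound of Pajor and Tomczak-Jaegermann \cite{Pajor} via $\ell$-norms, together with Pisier's results on $K$-convexity \cite{Pisier} controlling the $L^{p}\to L^{q}$ geometry, and producing precisely the factor $q^{1/2}$ for $q<\infty$, respectively $(\log_{2}n)^{1/2}$ for $q=\infty$, that occurs in the statement), block $l$ costs $\ll d_{l}\log_{2}(\lambda(l)/\varepsilon)$ bits, and
\[
\sum_{l\le N}d_{l}\log_{2}\frac{\lambda(l)}{\varepsilon}=\frac{\gamma}{\ln 2}\sum_{l\le N}d_{l}\bigl(N^{r}-l^{r}\bigr)=\frac{\gamma a_{d}}{\ln 2}\cdot\frac{r}{d+r}\,N^{d+r}\bigl(1+o(1)\bigr),
\]
since $\sum_{l\le N}d_{l}(N^{r}-l^{r})=a_{d}d\int_{0}^{N}t^{d-1}(N^{r}-t^{r})\,dt+O\!\left(N^{d+r-1}\right)+O\!\left(N^{\theta_{d}+r+1}\right)$. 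Choosing $N$ so that the above equals $n-1$ gives $N^{d+r}\sim\frac{(d+r)\ln 2}{\gamma r a_{d}}\,n$, hence, substituting the value of $a_{d}$, $\gamma N^{r}\sim\mathcal{C}\,n^{r/(d+r)}$ with exactly the constant $\mathcal{C}$ of the statement; this yields \eqref{ecuacioninfentropia}.

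For the lower bound \eqref{ecuacioninfinitasentropia2} I would feed $\lambda_{\textbf{k}}=e^{-\gamma\abs{\textbf{k}}^{r}}$ into the general lower estimate on the subspace $\mathcal{T}_{M}$, where $M=M(n)$ is chosen so that $\dim\mathcal{T}_{M}\asymp n^{d/(d+r)}$ — in particular $\dim\mathcal{T}_{M}\ll n$, so that one is covering $\Lambda^{(2)}(U_{p}\cap\mathcal{T}_{M})$ by $2^{n-1}$ small balls. The volume/Levy-mean argument behind that estimate — which loses nothing for $p<\infty$, $q>1$, a factor $(\log_{2}n)^{-1/2}$ when $q=1$ (poor cotype of $L^{1}$) and/or when $p=\infty$, and $(\log_{2}n)^{-1}$ when $p=\infty$, $q=1$ — gives, up to these factors, $-\ln e_{n}\ll\frac{1}{\dim\mathcal{T}_{M}}\bigl[\gamma\sum_{l\le M}d_{l}l^{r}+(n-1)\ln 2\bigr]$. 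Minimizing over $M$, with $\sum_{l\le M}d_{l}l^{r}=\frac{a_{d}d}{d+r}M^{d+r}(1+o(1))$ and $\dim\mathcal{T}_{M}=a_{d}M^{d}(1+o(1))$, one obtains the same optimal level $M^{d+r}\sim\frac{(d+r)\ln 2}{\gamma r a_{d}}\,n$ and minimum value $\mathcal{C}\,n^{r/(d+r)}$, i.e. $e_{n}(\Lambda^{(2)}U_{p},L^{q})\gg e^{-\mathcal{C}n^{r/(d+r)}}$ times the stated powers of $\log_{2}n$.

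The main obstacle is that one needs the \emph{exact} constant $\mathcal{C}$, not just the order $n^{r/(d+r)}$ of the exponent, and moreover the error in the exponent must be controlled finely enough that only the displayed polylogarithmic factors survive. This forces one to track the precise coefficient $a_{d}$ through the lattice-point counts and through both optimizations, and to keep the contribution of the lattice error to $-\ln e_{n}$ bounded: after division by $\dim\mathcal{T}_{M}\asymp M^{d}$ with $M\asymp n^{1/(d+r)}$, that contribution has order $M^{\theta_{d}+r+1-d}$, which stays bounded exactly when $r\le(d-1)-\theta_{d}$. Numerically this reads $r\le1$ for $d\ge4$, $r\le11/16=2-21/16$ for $d=3$ (via Heath-Brown \cite{HB}), and $r<77/208=1-131/208$ for $d=2$ (via Huxley \cite{Hu}), whence the concrete value $76/208$, which also absorbs the logarithmic factor present in Huxley's estimate; the condition $r\le1$ in addition secures the block-constancy of $\lambda$ used throughout. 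A secondary, more routine task is to verify that the general upper and lower bounds match — up to the indicated powers of $\log_{2}n$ and of $q$ — in each of the regimes $2\le p\le\infty$ versus $p<\infty$, and $q=\infty$, $q=1$ versus $q>1$; this is where the $L^{p}$--$L^{q}$ Banach-space geometry (type, cotype, $K$-convexity) enters.
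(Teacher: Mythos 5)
Your overall strategy coincides with the paper's: both bounds are obtained by specializing the general volume-based lower estimate (Theorem \ref{teoremainferioressentropia}) and the general covering upper estimate (Theorem \ref{teoremasuperioresentropia}) to $\lambda(t)=e^{-\gamma t^{r}}$, and the heart of the matter is exactly what you identify --- carrying the leading coefficient $a_{d}=\pi^{d/2}/(2^{d-1}d\,\Gamma(d/2))$ of the lattice-point counts through the optimization of $A_{N,k}=-\tfrac{1}{n}\bigl(k\ln 2+\gamma\sum_{l\le N}l^{r}d_{l}\bigr)$ over $N$, which is extremized at $N^{d+r}\asymp (d+r)(\ln 2)k/(a_{d}r\gamma)$ and yields $-\mathcal{C}k^{r/(d+r)}+O(1)$, while the lattice error contributes $O(N^{\theta_{d}+r+1-d})$, bounded precisely when $r\le d-1-\theta_{d}$. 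Your derivation of the restrictions on $r$, of the optimal truncation level, and of the sources of the logarithmic and $q^{1/2}$ factors all match the paper (Proposition \ref{proposicionaplicaciones1}, Remark \ref{observacionentropiainfinitas} and Section 5).

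The one place where your sketch, taken literally, would not deliver \eqref{ecuacioninfentropia} is the upper-bound covering scheme (truncate at level $N$, discard the tail, cover each block $\mathcal{H}_{l}$, $l\le N$, to accuracy $\varepsilon=e^{-\gamma N^{r}}$). First, the discarded tail is \emph{not} $\ll\lambda(N)$ in $L^{q}$ for $q>2$: for $\varphi\in U_{2}$ the general bound is only $\bigl\Vert\sum_{\abs{\mathbf{k}}>N}\lambda_{\mathbf{k}}\widehat{\varphi}(\mathbf{k})\phi_{\mathbf{k}}\bigr\Vert_{\infty}\ll\bigl(\sum_{\abs{\mathbf{k}}>N}\lambda_{\mathbf{k}}^{2}\bigr)^{1/2}\asymp N^{(d-r)/2}e^{-\gamma N^{r}}$, and this order is attained (e.g.\ for the trigonometric system); the polynomial factor $N^{(d-r)/2}\asymp n^{(d-r)/(2(d+r))}$ is not absorbed by the stated right-hand side. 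Second, covering $N$ blocks each to accuracy $\varepsilon$ accumulates a total error $\asymp N\varepsilon$, again a polynomial loss against $e^{-\mathcal{C}n^{r/(d+r)}}$. The paper avoids both: the part in $\mathcal{T}_{N}$ is covered by one global volumetric argument (the quantity $\chi_{k}$, whose logarithm is precisely your bit count $\sum_{l\le N}d_{l}\log_{2}(\lambda(l)/\varepsilon)$ up to $O(N^{d}\log_{2}q)$), and the tail is not discarded but split dyadically into blocks $[N_{j},N_{j+1}]$ with $\abs{\lambda(N_{j})}\le 2^{-j+1}\abs{\lambda(N)}$, covered coarsely via the Pajor--Tomczak-Jaegermann lemma for $j\le M$ and estimated via a Nikolskii-type bound for $j>M$; the convergence of $\sum_{j>M}2^{-j}\theta_{N_{j},N_{j+1}}^{1/2-1/q}$ and of the $j\le M$ sum rests on $\Lambda^{(2)}\in K_{\epsilon,2}$ (Remarks \ref{observacion-superiores-1} and \ref{Remark36}). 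If you replace your truncation scheme by an appeal to Theorem \ref{teoremasuperioresentropia}, the remainder of your computation, including the constant $\mathcal{C}$ and the admissible range of $r$, goes through exactly as in the paper.
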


The results of Theorem \ref{aplicacionesentropiafinitas1} also hold for the operator $\Lambda^{(1)}_{*}$ and the results of  Theorem \ref{teoremainfinitasentropia} hold for $\Lambda^{(2)}_{*}$ for all $0<r\leq 1$ and any dimension $d$, if we change the constant $\mathcal{C}$ by the constant $\mathcal{C}_{*}$, where
\[
\mathcal{C}_{*}=\gamma^{d/(d+r)}\biggl(\frac{(d+r)(\ln 2)}{r} \biggl)^{r/(d+r)}.
\]

The proofs in this paper are given only for the operators of type $\Lambda$. In Remark \ref{nuevaobser} we explain as the results can be proved for the operators of type $\Lambda_{*}$.

In this paper, the trigonometric system considered on the $d$-dimensional torus $\mathbb{T}^d$ is constructed by making the product of the usual real trigonometric system on the unit circle (see Proposition \ref{proposi1} and Remark \ref{examples}).
In  \cite{Stabile}, estimates for entropy numbers of the operator  
$\Lambda^{(2)}$ were studied considering another trigonometric system on the torus. The upper and lower estimates were obtained for all real number $r$, $0< r \leq 1$ and for any dimension $d$. We note that the estimates in \cite{Stabile} when $d=2$ are true only if
$0<r\leq 76/208$ and for $d=3$ only if $0<r\leq 11/16$.

The estimates of Theorems \ref{aplicacionesentropiafinitas1} and \ref{teoremainfinitasentropia}
are sharp in various important situations. For $2\leq p,q <\infty$,  we have that 
$$
e_{n}\left(\Lambda^{(1)}U_{p},L^{q}\right)\asymp e_{n}\left(\Lambda^{(1)}_{*}U_{p},L^{q}\right)\asymp n^{-\gamma/d}(\log_{2} n)^{-\xi}, 
\quad \gamma>d/2, \; \xi\geq 0;
$$ 
$$
e_{n}\left(\Lambda^{(2)}_{*}U_{p},L^{q}\right)\asymp e^{-\mathcal{C}_{*}n^{r/(d+r)}}, \quad 0<r\leq 1, \, \gamma >0;
$$
and
$$
e_{n}\left(\Lambda^{(2)}U_{p},L^{q}\right)\asymp e^{-\mathcal{C}n^{r/(d+r)}}, 
$$
for $0<r\leq 1$ if $d\geq 4$, $0<r\leq 76/208$ if $d=2$, $0<r\leq 11/16$ if $d=3$ and for $\gamma >0$.


The organization of the paper is as follows. In Section 2 we introduce real-valued Vilenkin systems. In Section 3 we recall some relevant facts and establish a few auxiliary results for later use.
In Section 4 we prove two theorems where upper and lower bounds are established for entropy numbers of general multiplier operators. 
The paper ends with the proofs of Theorems \ref{aplicacionesentropiafinitas1} and \ref{teoremainfinitasentropia}  and two remarks in Section 5.

\section{Real-valued Vilenkin system}

Let $(s_{j})_{j\in\N_{0}}$ be a sequence of integers with $s_{j}\geq2$
and let $P_{0}=1$, $P_{k}=\prod_{j=0}^{k-1}s_{j}$ for $k\geq1$.
For an integer $s\geq2$, consider the group $\Z_{s}=\Z/(s\Z)$ with
the discrete topology and the group $\G=\prod_{k=0}^{\infty}\Z_{s_{k}}$
with the product topology. Denote by $\nu$ the normalized Haar measure   
on $\G$. We can identify $\G$ with the unit interval $[0,1]$, associating
each $(x_{k})\in\G$ with the point $|x|=\sum_{k=0}^{\infty}x_{k}/P_{k+1}$.
If we disregard the countable subset
\[
\G_{0}=\left\{ (x_{j}):\,\exists k\in\N_{0}:x_{j}=0\text{, }\forall j\geq k\right\} \subset\G\text{,}
\]
this mapping is one-to-one and onto $(0,1]$. If $A$ is a measurable
subset of $\G$ then the measure $\nu(A)$ is equal to the Lebesgue
measure of the set $|A|=\left\{ |x|:x\in A\right\} $. For $x=(x_{k})\in\G$
we define ${\rho}_{k}(x)=\exp(2\pi ix_{k}/s_{k})$, $k\in\N_{0}$.
For $n\in\N_{0}$, let $(n_{k})_{k\in\N}$, $n_{k}\in\{0,1,\ldots,s_{k}-1\}$,
be the unique sequence such that $n=\sum_{k=0}^{\infty}n_{k}P_{k}$.
We define the (complex-valued) Vilenkin function ${\psi}_{n}$ \cite{Vilenkin,Watari}
by
\[
{\psi}_{n}(x)=\prod_{k=0}^{\infty}\rho_{k}^{n_{k}}(x).
\]
If $s_{k}=2$ for all $k\in\N_{0}$, the Vilenkin functions are the
Walsh functions. The Walsh system has been extensively used in data
transmission, filtering, image enhancement, signal analysis and pattern
recognition \cite{Bar,Schipp}.

We denote by $L^{p}(\G)$, $1\leq p\leq\infty$, the usual vector space
of complex measurable functions $f$ on $\G$.
The Vilenkin-Fourier system $(\psi_{n})_{n\in\N}$ is a complete orthonormal
set in $L^{2}(\G)$ \cite{Vilenkin}. For $f\in L^{1}(\G)$,
we define the $n^{\text{th}}$ partial sum $S_{n}\left(f\right)$
of the Vilenkin-Fourier series of $f$ by 
\[
S_{n}\left(f\right)=\sum_{k=0}^{n-1}\hat{f}\left(k\right)\psi_{k}\text{,}\qquad\hat{f}\left(k\right)=\int_{\G}f\left(t\right)\overline{\psi_{k}}\left(t\right)d\nu(t).
\]
It is known \cite{Young} that, if $1<p<\infty$ and $f\in L^{p}(\G)$,
then $S_{n}\left(f\right)$ converges to $f$ in $L^{p}(\G)$.

The Fourier system $\left\{ \cos(nx):n\in\N_{0}\right\} \cup\left\{ \sin(nx):n\in\N\right\} $
can be obtained from the system \linebreak $\left\{ \exp\left(inx\right):n\in\Z\right\} $
by taking the real and imaginary part of $\exp\left(inx\right)$ for
$n\in\N_{0}$. We proceed similarly to obtain a complete orthonormal
system of real-valued functions from the complex-valued Vilenkin functions,
but some care is needed, as can be seen in the following examples.
\begin{example}
If $s_{k}=2$ for all $k\in\N_{0}$, we have ${\rho}_{k}(x)=\left(-1\right)^{x_{k}}$
for all $k$ and ${\psi}_{n}$ is real-valued for all $n\in\N_{0}$.
\end{example}

\begin{example}
\label{exa:s_k-odd}Suppose $s_{k}$ is odd for all $k\in\N_{0}$.
Then ${\rho}_{k}(x)=\omega_{s_{k}}^{x_{k}}$ for all $k$, where $\omega_{s}=\exp(2\pi i/s)$.
Note that $\omega_{s}^{u}=\exp\left(\pi i(2u/s)\right)$ is not a
real number unless $2u/s\in\Z$. Now, let $n\in\N$ and fix $j\in\N_{0}$
such that $n_{j}\neq0$. Let $x\in\G$ with $x_{j}=1$ and $x_{k}=0$
for $k\neq j$. Then
\[
{\psi}_{n}(x)=\prod_{k=0}^{\infty}\rho_{k}^{n_{k}}(x)=\prod_{k=0}^{\infty}\omega_{s_{k}}^{n_{k}x_{k}}=\omega_{s_{j}}^{n_{j}}
\]
is not a real number because $2n_{j}/s_{j}\notin\Z$. Therefore, ${\psi}_{n}$
is real-valued if and only if $n=0$.
\end{example}

\begin{example}
Suppose $s_{k}=4$ for all $k\in\N_{0}$. Then ${\rho}_{k}(x)=i^{x_{k}}$
for all $k$. Let $n\in\N$. If $n_{k}=2u_{k}$ for all $k$, then
\[
{\psi}_{n}(x)=\prod_{k=0}^{\infty}\rho_{k}^{n_{k}}(x)=\prod_{k=0}^{\infty}i^{2u_{k}x_{k}}=\prod_{k=0}^{\infty}\left(-1\right)^{x_{k}}
\]
is a real number for all $x\in\G$. Otherwise, fix $j\in\N_{0}$ such
that $n_{j}$ is odd and let $x\in\G$ with $x_{j}=1$ and \linebreak $x_{k}=0$
for $k\neq j$. Then

\[
{\psi}_{n}(x)=\prod_{k=0}^{\infty}\rho_{k}^{n_{k}}(x)=\prod_{k=0}^{\infty}i^{n_{k}x_{k}}=i^{n_{j}}
\]
is not a real number. Therefore, ${\psi}_{n}$ is real-valued if and
only if $n_{k}$ is even for all $k\in\N_{0}$.
\end{example}

\begin{example}
Let $s_{2k}=2$ and $s_{2k+1}=3$ for all $k\in\N_{0}$. In this case,
${\rho}_{2k}(x)=\left(-1\right)^{x_{2k}}$ and ${\rho}_{2k+1}(x)=\omega_{3}^{x_{2k+1}}$
where $\omega_{3}=\exp(2\pi i/3)$. Let $n\in\N_{0}$. If $n_{2k+1}=0$
for all $k$, we have
\[
{\psi}_{n}(x)=\prod_{k=0}^{\infty}\rho_{k}^{n_{k}}(x)=\prod_{k=0}^{\infty}\rho_{2k}^{n_{2k}}(x)=\prod_{k=0}^{\infty}\left(-1\right)^{n_{2k}x_{2k}}
\]
is real-valued. Otherwise, fix $j\in\N_{0}$ such that $n_{2j+1}\neq0$
and let $x\in\G$ with $x_{2j+1}=1$ and $x_{k}=0$ for $k\neq2j+1$.
Then ${\psi}_{n}(x)=\omega_{3}^{n_{2k+1}}$ is not a real number (see
Example \ref{exa:s_k-odd}). Therefore, ${\psi}_{n}$ is real-valued
if and only if $n_{2k+1}=0$ for all $k\in\N_{0}$.
\end{example}

For $n,m\in\N_{0}$, let
\begin{equation*}
n\oplus m =\sum_{k=0}^{\infty}\left(\left(n_{k}+m_{k}\right)\mod s_{k}\right)P_{k}, \quad
n\ominus m =\sum_{k=0}^{\infty}\left(\left(n_{k}-m_{k}\right)\mod s_{k}\right)P_{k}
\end{equation*}
and $\ominus n=0\ominus n$. If $u=n\oplus m$, then $u_{k}=n_{k}+m_{k}-\alpha_{k}s_{k}$
with $\alpha_{k}\in\N_{0}$ and $\psi_{n\oplus m}\left(x\right) = \psi_{n}\left(x\right)\psi_{m}\left(x\right)$.
In particular, $\psi_{n}\left(x\right)\psi_{\ominus n}\left(x\right)=1$
and, since $\left|\psi_{n}\left(x\right)\right|=1$, we have $\psi_{\ominus n}\left(x\right)=\overline{\psi_{n}\left(x\right)}$.
Therefore, $\psi_{n}$ is real-valued if and only if $n=\ominus n$.
We define
$$
\mathbb{K} =\left\{ n\in\N_{0}:\,n<\ominus n\right\} , \quad
\mathbb{L} =\left\{ n\in\N_{0}:\,n>\ominus n\right\} , \quad
\mathbb{M} =\left\{ n\in\N_{0}:\,n=\ominus n\right\} .
$$
For $n\in\mathbb{K}\cup\mathbb{L}$, let $X_{n}=\sqrt{2}\Re\left(\psi_{n}\right)$ and $Y_{n}=\sqrt{2}\Im\left(\psi_{n}\right)$, 
where $\Re\left(f\right)$ denotes the real part and $\Im\left(f\right)$ the imaginary part of a complex function $f$.
For $n\in\mathbb{M}$, let $X_{n}=\psi_{n}$ and $Y_{n}\equiv0$. 
\begin{proposition}
The set
\[
\Psi=\left\{ X_{n}:\,n\in\mathbb{K}\cup\mathbb{M}\right\} \cup\left\{ Y_{n}:\,n\in\mathbb{K}\right\} 
\]
 is a complete orthonormal system of $L^{2}\left(\G\right)$.
\end{proposition}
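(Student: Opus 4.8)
The plan is to verify the three defining properties of a complete orthonormal system: orthonormality, and spanning density. I begin by recording the basic relations coming from $\psi_n \psi_{\ominus n} = 1$ and $|\psi_n| = 1$, so that $\psi_{\ominus n} = \overline{\psi_n}$. In particular $\Re(\psi_n) = \Re(\psi_{\ominus n})$ and $\Im(\psi_n) = -\Im(\psi_{\ominus n})$, which is what makes the bookkeeping between $\mathbb{K}$ and $\mathbb{L}$ work: the pair $\{\psi_n, \psi_{\ominus n}\}$ with $n \in \mathbb{K}$ (so $\ominus n \in \mathbb{L}$) spans the same two-dimensional complex subspace as $\{X_n, Y_n\}$, since $\psi_n = (X_n + i Y_n)/\sqrt 2$ and $\psi_{\ominus n} = (X_n - i Y_n)/\sqrt 2$. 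For $n \in \mathbb{M}$ we just have $\psi_n = X_n$ real-valued. Thus $\Psi$ and $\{\psi_n : n \in \N_0\}$ span the same linear subspace of $L^2(\G)$ (over $\C$), and since the latter is dense, so is the former; this will give completeness once orthonormality is established.

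Next I would prove orthonormality by a direct computation using the known orthonormality of $\{\psi_n\}_{n\in\N_0}$ in $L^2(\G)$, i.e. $\int_\G \psi_n \overline{\psi_m}\, d\nu = \int_\G \psi_{n\ominus m}\, d\nu = \delta_{n,m}$. Writing $X_n = (\psi_n + \psi_{\ominus n})/\sqrt2$ and $Y_n = (\psi_n - \psi_{\ominus n})/(i\sqrt2)$ for $n \in \mathbb{K}\cup\mathbb{L}$, I expand the inner products $\langle X_n, X_m\rangle$, $\langle Y_n, Y_m\rangle$, $\langle X_n, Y_m\rangle$ into sums of four terms of the form $\int_\G \psi_{\pm n \ominus (\pm m)}\, d\nu$. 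Each such integral is $1$ precisely when the two indices coincide and $0$ otherwise. The key combinatorial point is that for $n \in \mathbb{K}$ one has $n \neq \ominus n$, and for $n, m \in \mathbb{K}$ with $n \neq m$ all four of $n, \ominus n, m, \ominus m$ are distinct (using $n \ne m \Rightarrow \ominus n \ne \ominus m$, and $\mathbb{K}\cap\mathbb{L} = \emptyset$). Bookkeeping then yields $\langle X_n, X_m\rangle = \langle Y_n, Y_m\rangle = \delta_{n,m}$ and $\langle X_n, Y_m\rangle = 0$ for $n,m \in \mathbb{K}$, while orthogonality of $X_n$ ($n\in\mathbb{M}$) against the others follows since $\psi_n$ with $n \in \mathbb{M}$ is orthogonal to $\psi_m$ and $\psi_{\ominus m}$ for $m \in \mathbb{K}\cup\mathbb{L}$. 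Normalization of $X_n = \psi_n$ for $n \in \mathbb{M}$ is immediate.

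Finally, I would assemble the argument: $\Psi$ consists of real-valued functions by construction, it is orthonormal by the previous step, and its closed linear span (over $\C$, or equivalently the complexification of its real span) contains every $\psi_n$ and hence is all of $L^2(\G)$. Therefore $\Psi$ is a complete orthonormal system. I expect the main obstacle to be the careful case analysis in the orthonormality computation — keeping track of which of the indices $\pm n \ominus (\pm m)$ can collide, and correctly handling the three ways a pair $(n,m)$ can sit relative to $\mathbb{K}$, $\mathbb{L}$, $\mathbb{M}$ — rather than any conceptual difficulty; the completeness half is essentially a one-line span argument once one observes that replacing $\{\psi_n, \psi_{\ominus n}\}$ by $\{X_n, Y_n\}$ is an invertible linear change of basis on each two-dimensional block.
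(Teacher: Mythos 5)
Your proposal is correct and follows essentially the same route as the paper: both arguments reduce everything to the orthonormality and completeness of the complex Vilenkin system $\left\{\psi_{n}\right\}$, use the identities $\psi_{\ominus n}=\overline{\psi_{n}}$ and $\psi_{n}=2^{-1/2}\left(X_{n}+iY_{n}\right)$, and rest on the same key combinatorial observation that $n\neq\ominus m$ for distinct $n,m\in\mathbb{K}$. The only differences are presentational — you invert the change of basis and compute inner products via $\psi_{n}\overline{\psi_{m}}=\psi_{n\ominus m}$, and you phrase completeness as equality of spans rather than triviality of the orthogonal complement — but these are equivalent to the paper's computations.
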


\begin{proof}
If $n\in\mathbb{M}$, then $\left\Vert X_{n}\right\Vert _{2}=\left\Vert \psi_{n}\right\Vert _{2}=1$.
If $n\in\mathbb{K}$, then $\psi_{n}=2^{-1/2}\left(X_{n}+iY_{n}\right)$.
Since $\left\Vert \psi_{n}\right\Vert _{2}=1$,
\[
2=2\int_{\G}\left|\psi_{n}\right|^{2}d\nu=\int_{\G}\left|X_{n}+iY_{n}\right|^{2}d\nu=\int_{\G}X_{n}^{2}d\nu+\int_{\G}Y_{n}^{2}d\nu,
\]
that is, $\left\Vert X_{n}\right\Vert _{2}^{2}+\left\Vert Y_{n}\right\Vert _{2}^{2}=2$.
Also, since $n\neq\ominus n$ and $\psi_{n},\psi_{\ominus n}$ are
orthogonal
\begin{align*}
0 & =2\int_{\G}\psi_{n}\psi_{\ominus n}d\nu=\int_{\G}\left(X_{n}+iY_{n}\right)\left(X_{n}-iY_{n}\right)\,d\nu\\
 & =\int_{\G}\left(X_{n}^{2}-Y_{n}^{2}\right)d\nu+i\int_{\G}\left(X_{n}Y_{n}+Y_{n}X_{n}\right)d\nu.
\end{align*}
From the real part, we obtain $\left\Vert X_{n}\right\Vert _{2}^{2}-\left\Vert Y_{n}\right\Vert _{2}^{2}=0$
and therefore$\left\Vert X_{n}\right\Vert _{2}=\left\Vert Y_{n}\right\Vert _{2}=1$.
From the imaginary part, it follows that $\int_{\G}X_{n}Y_{n}d\nu=0$.
If $n\in\mathbb{K}$ and $m\in\mathbb{M}$, since $\psi_{n},\psi_{m}$
are orthogonal,
\[
0=\sqrt{2}\int_{\G}\psi_{n}\psi_{m}d\nu=\int_{\G}\left(X_{n}+iY_{n}\right)X_{m}\,d\nu.
\]
Therefore, $\int_{\G}X_{n}X_{m}d\nu=\int_{\G}Y_{n}X_{m}d\nu=0$. Finally,
if $n,m\in\mathbb{K}$ with $n\neq m$, we have
\begin{align*}
0 & =2\int_{\G}\psi_{n}\psi_{m}d\nu=\int_{\G}\left(X_{n}+iY_{n}\right)\left(X_{m}+iY_{m}\right)\,d\nu\\
 & =\int_{\G}\left(X_{n}X_{m}-Y_{n}Y_{m}\right)d\nu+i\int_{\G}\left(X_{n}Y_{m}+Y_{n}X_{m}\right)d\nu.
\end{align*}
We also have $n\neq\ominus m$ (otherwise $m<\ominus m=n<\ominus n=m$,
a contradiction). Thus, $\psi_{n}$ and $\psi_{\ominus m}$ are orthogonal
and
\begin{align*}
0 & =2\int_{\G}\psi_{n}\psi_{\ominus m}d\nu=\int_{\G}\left(X_{n}+iY_{n}\right)\left(X_{m}-iY_{m}\right)\,d\nu\\
 & =\int_{\G}\left(X_{n}X_{m}+Y_{n}Y_{m}\right)d\nu+i\int_{\G}\left(-X_{n}Y_{m}+Y_{n}X_{m}\right)d\nu.
\end{align*}
We obtain $\int_{\G}X_{n}X_{m}d\nu=\int_{\G}Y_{n}X_{m}d\nu=0$ from
the sum of the equations and $\int_{\G}Y_{n}Y_{m}d\nu=\int_{\G}X_{n}Y_{m}d\nu=0$,
from the subtraction. Therefore, we conclude that the set $\Psi$
is an orthonormal system.

To prove that $\Psi$ is complete, let $f\in L^{2}\left(\G\right)$
and supose that $\int_{\G}fX_{n}\,d\nu=0$ for all $n\in\mathbb{K}\cup\mathbb{M}$
and $\int_{\G}fY_{n}\,d\nu=0$ for all $n\in\mathbb{K}$. We have
$\psi_{n}=2^{-1/2}\left(X_{n}+iY_{n}\right)$ for $n\in\mathbb{K}$
and $\psi_{n}=X_{n}$ for $n\in\mathbb{M}$. For $n\in\mathbb{L}$,
we have $\ominus n\in\mathbb{K}$ and $\psi_{n}=2^{-1/2}\left(X_{\ominus n}-iY_{\ominus n}\right)$.
Therefore, $\int_{\G}f\psi_{n}d\nu=0$ for all $n\in\N_{0}$. But
$\left(\psi_{n}\right)_{n\in\N_{0}}$ is a complete orthonormal system
of $L^{2}\left(\G\right)$ and therefore $f=0$ a.e.
\end{proof}
The following proposition shows a simple way of determining whether
$n$ belongs to $\mathbb{K}$, $\mathbb{L}$ or $\mathbb{M}$.
\begin{proposition}
\label{prop:sep-k-l-m}Let $n\in\N$ and $r\in\N_{0}$ such that $P_{r}\leq n<P_{r+1}$
(in particular, $n=\sum_{k=0}^{r}n_{k}P_{k}$ and $n_{r}>0$). Let
$n'=\sum_{k=0}^{r-1}n_{k}P_{k}=n-n_{r}P_{r}$. We have:
\begin{enumerate}
\item If $n_{r}<s_{r}/2$, then $n\in\mathbb{K}$.
\item If $n_{r}>s_{r}/2$, then $n\in\mathbb{L}$.
\item If $n_{r}=s_{r}/2$ (in particular, $s_{r}$ is even) then $n\in A\iff n'\in A$
for $A=\mathbb{K},\mathbb{L},\mathbb{M}$.
\end{enumerate}
\end{proposition}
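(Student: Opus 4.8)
The plan is to work directly with the mixed-radix representation $n=\sum_{k=0}^{\infty}n_{k}P_{k}$ and reduce everything to a lexicographic comparison of digit strings. First I would record the digits of $\ominus n$: from $\ominus n=\sum_{k}\bigl((-n_{k})\bmod s_{k}\bigr)P_{k}$ one gets $(\ominus n)_{k}=0$ when $n_{k}=0$ and $(\ominus n)_{k}=s_{k}-n_{k}$ when $n_{k}\neq 0$; in either case $0\le(\ominus n)_{k}\le s_{k}-1$, so this is the canonical representation of $\ominus n$. In particular, if $P_{r}\le n<P_{r+1}$ then $n_{k}=0$ for $k>r$, hence also $(\ominus n)_{k}=0$ for $k>r$, while $(\ominus n)_{r}=s_{r}-n_{r}\ge 1$ since $n_{r}>0$; thus both $n$ and $\ominus n$ lie in $[0,P_{r+1})$ and their order is governed by their digits of index $\le r$.

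The second ingredient is the elementary fact that for $a=\sum_{k=0}^{r}a_{k}P_{k}$ and $b=\sum_{k=0}^{r}b_{k}P_{k}$ with $0\le a_{k},b_{k}\le s_{k}-1$, if $j$ is the largest index with $a_{j}\neq b_{j}$, then $a-b$ has the same sign as $a_{j}-b_{j}$; this follows from $\bigl|\sum_{k<j}(a_{k}-b_{k})P_{k}\bigr|\le\sum_{k<j}(s_{k}-1)P_{k}=P_{j}-1<P_{j}\le|a_{j}-b_{j}|P_{j}$. Applying this with $a=n$, $b=\ominus n$: in item (1), $n_{r}<s_{r}/2$ gives $n_{r}<s_{r}-n_{r}=(\ominus n)_{r}$, so index $r$ is a genuine (and the highest) disagreement, whence $n<\ominus n$, i.e. $n\in\mathbb{K}$; item (2) is symmetric and yields $n\in\mathbb{L}$.

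For item (3), where $s_{r}$ is even and $n_{r}=s_{r}/2$, the digit of index $r$ agrees, $(\ominus n)_{r}=s_{r}-n_{r}=s_{r}/2=n_{r}$, so I would not invoke the comparison lemma but compute directly. By the first step, the digits of $\ominus n'$ of index $<r$ coincide with those of $\ominus n$ (since $n'_{k}=n_{k}$ there) and those of index $\ge r$ vanish, so $\ominus n=(s_{r}/2)P_{r}+\ominus n'$, while $n=(s_{r}/2)P_{r}+n'$; subtracting gives $n-\ominus n=n'-\ominus n'$. Hence $n<\ominus n\iff n'<\ominus n'$, $n>\ominus n\iff n'>\ominus n'$, and $n=\ominus n\iff n'=\ominus n'$, which is exactly the asserted equivalence for $A=\mathbb{K},\mathbb{L},\mathbb{M}$; the degenerate subcase $n'=0$ (which occurs e.g. when $r=0$) gives $n=\ominus n$, consistently with $0\in\mathbb{M}$.

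No step presents a serious obstacle; the only points needing care are verifying that the digit string of $\ominus n$ is the canonical one, so that comparing the integers $n$ and $\ominus n$ really amounts to comparing their digit sequences, and keeping track of the degenerate case $n'=0$ in item (3).
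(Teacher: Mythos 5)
Your proof is correct and follows essentially the same route as the paper's: both reduce the comparison of $n$ and $\ominus n$ to the leading digits $n_{r}$ versus $s_{r}-n_{r}$ via the bound $n'<P_{r}$ (your telescoping identity $\sum_{k<j}(s_{k}-1)P_{k}=P_{j}-1$ is exactly what makes the paper's inequality $n=n'+n_{r}P_{r}<P_{r}+n_{r}P_{r}\leq\tilde{n}_{r}P_{r}$ work), and both handle item (3) by subtracting the common term $(s_{r}/2)P_{r}$ to get $n-\ominus n=n'-\ominus n'$. The only cosmetic differences are that you package the digit comparison as a general lexicographic lemma and prove item (2) directly from it, whereas the paper deduces item (2) from item (1) applied to $\ominus n$.
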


\begin{proof}
We have $\ominus n=\sum_{k=0}^{r}\tilde{n}_{k}P_{k}$ where $\tilde{n}_{k}=\left(s_{k}-n_{k}\right)\mod s_{k}$.
If $n_{r}<s_{r}/2$ then $\tilde{n}_{r}=s_{r}-n_{r}>s_{r}/2>n_{r}$
and
\[
n=n'+n_{r}P_{r}<P_{r}+n_{r}P_{r}\leq\tilde{n}_{r}P_{r}\leq\ominus n.
\]
Thus $n\in\mathbb{K}$. If $n_{r}>s_{r}/2$, then $\tilde{n}_{r}<s_{r}/2$
and, by the previous case, $\tilde{n}\in\mathbb{K}$, that is, $n\in\mathbb{L}$.
Now, suppose $n_{r}=s_{r}/2$. In this case, $\tilde{n}_{r}=s_{r}-n_{r}=n_{r}$
and $\ominus n'=\ominus n-\tilde{n}_{r}P_{r}=\ominus n-n_{r}P_{r}$.
Since $n=n'+n_{r}P_{r}$ and $\ominus n=\ominus n'+n_{r}P_{r}$,
\[
n\in\mathbb{K}\iff n<\ominus n\iff n'<\ominus n'\iff n'\in\mathbb{K}.
\]
Symmilarly, $n\in\mathbb{L}\iff n'\in\mathbb{L}$ and $n\in\mathbb{M}\iff n'\in\mathbb{M}$.
\end{proof}
Now we consider the problem of ordering $\Psi$. One simple ordering
is defined by
\[
Z_{n}=\begin{cases}
X_{n}, & n\in\mathbb{K}\cup\mathbb{M},\\
Y_{\ominus n}, & n\in\mathbb{L}.
\end{cases}
\]
Since $n\in\mathbb{L}\mapsto\ominus n\in\mathbb{K}$ and $\left\{ Y_{n}: \,n\in\mathbb{K}\right\} =\left\{ Y_{\ominus n}: \,n\in\mathbb{L}\right\} $,
we have $\left\{ Z_{n}: \,n\in\N_{0}\right\} =\Psi$. We can also consider
the system
\[
\tilde{Z}_{n}=\begin{cases}
X_{n}, & n\in\mathbb{K}\cup\mathbb{M},\\
Y_{n}, & n\in\mathbb{L}.
\end{cases}
\]
Note that, for all $n\in\N_{0}$, we have $\psi_{\ominus n}=\overline{\psi_{n}}$.
Thus, $X_{\ominus n}=X_{n}$ and $Y_{\ominus n}=-Y_{n}$, that is,
$\tilde{Z}_{n}=Z_{\ominus n}$ and $\left\{\tilde{Z}_{n} : \,n\in\N_{0}\right\}$ is an ordering
of 
\[
\tilde{\Psi}=\left\{ X_{n}:\,n\in\mathbb{K}\cup\mathbb{M}\right\} \cup\left\{ -Y_{n}:\,n\in\mathbb{K}\right\} 
\]
which is also a complete orthonormal system.

We have that $\left\Vert Z_{n}\right\Vert _{\infty}=\left\Vert \tilde{Z}_{n}\right\Vert _{\infty} \leq \sqrt{2}$
for all $n \in \N_{0}$. From Proposition \ref{prop:sep-k-l-m}, if $P_{r}\leq n<P_{r+1}$, then we also have $P_{r}\leq \ominus n<P_{r+1}$.
Fixed a sequence $\left(s_j \right)$ and given $n \in \N_{0}$, Proposition \ref{prop:sep-k-l-m}  will facilitate the identification of the functions $Z_n$ and $\tilde{Z}_{n}$, especially in concrete cases as in the previous examples.

For the results of this study, any ordering of $\Psi$ or $\tilde{\Psi}$
is sufficient, since reordering preserves orthonormality and completeness.

\begin{example}
Suppose $s_{k}=3$ for all $k\in\N_{0}$. The following table shows
$Z_{n}$ and $\tilde{Z}_{n}$ for $n=0,\dots,27$. In the table, $A_{n}=\mathbb{K}\iff n\in\mathbb{K}$,
$A_{n}=\mathbb{L}\iff n\in\mathbb{L}$ and $A_{n}=\mathbb{M}\iff n\in\mathbb{M}$.
\[
\begin{array}{cccccccc|cccccccc}
n & n_{2} & n_{1} & n_{0} & \ominus n & A_{n} & Z_{n} & \tilde{Z}_{n} & n & n_{2} & n_{1} & n_{0} & \ominus n & A_{n} & Z_{n} & \tilde{Z}_{n}\\
0 & 0 & 0 & 0 & 0 & \mathbb{M} & X_{0} & X_{0} & 14 & 1 & 1 & 2 & 25 & \mathbb{K} & X_{14} & X_{14}\\
1 & 0 & 0 & 1 & 2 & \mathbb{K} & X_{1} & X_{1} & 15 & 1 & 2 & 0 & 21 & \mathbb{K} & X_{15} & X_{15}\\
2 & 0 & 0 & 2 & 1 & \mathbb{L} & Y_{1} & Y_{2} & 16 & 1 & 2 & 1 & 23 & \mathbb{K} & X_{16} & X_{16}\\
3 & 0 & 1 & 0 & 6 & \mathbb{K} & X_{3} & X_{3} & 17 & 1 & 2 & 2 & 22 & \mathbb{K} & X_{17} & X_{17}\\
4 & 0 & 1 & 1 & 8 & \mathbb{K} & X_{4} & X_{4} & 18 & 2 & 0 & 0 & 9 & \mathbb{L} & Y_{9} & Y_{18}\\
5 & 0 & 1 & 2 & 7 & \mathbb{K} & X_{5} & X_{5} & 19 & 2 & 0 & 1 & 11 & \mathbb{L} & Y_{11} & Y_{19}\\
6 & 0 & 2 & 0 & 3 & \mathbb{L} & Y_{3} & Y_{6} & 20 & 2 & 0 & 2 & 10 & \mathbb{L} & Y_{10} & Y_{20}\\
7 & 0 & 2 & 1 & 5 & \mathbb{L} & Y_{5} & Y_{7} & 21 & 2 & 1 & 0 & 15 & \mathbb{L} & Y_{15} & Y_{21}\\
8 & 0 & 2 & 2 & 4 & \mathbb{L} & Y_{4} & Y_{8} & 22 & 2 & 1 & 1 & 17 & \mathbb{L} & Y_{17} & Y_{22}\\
9 & 1 & 0 & 0 & 18 & \mathbb{K} & X_{9} & X_{9} & 23 & 2 & 1 & 2 & 26 & \mathbb{L} & Y_{16} & Y_{23}\\
10 & 1 & 0 & 1 & 20 & \mathbb{K} & X_{10} & X_{10} & 24 & 2 & 2 & 0 & 23 & \mathbb{L} & Y_{12} & Y_{24}\\
11 & 1 & 0 & 2 & 19 & \mathbb{K} & X_{11} & X_{11} & 25 & 2 & 2 & 1 & 14 & \mathbb{L} & Y_{14} & Y_{25}\\
12 & 1 & 1 & 0 & 24 & \mathbb{K} & X_{12} & X_{12} & 26 & 2 & 2 & 2 & 13 & \mathbb{L} & Y_{13} & Y_{26}\\
13 & 1 & 1 & 1 & 26 & \mathbb{K} & X_{13} & X_{13} & 27 & 0 & 0 & 0 & 54 & \mathbb{K} & X_{27} & X_{27}
\end{array}
\]

\end{example}

\begin{example}
Suppose $s_{k}=4$ for all $k\in\N_{0}$. The following table shows
$Z_{n}$ and $\tilde{Z}_{n}$ for $n=0,\dots,15$.
\[
\begin{array}{ccccccc|ccccccc}
n & n_{1} & n_{0} & \ominus n & A_{n} & Z_{n} & \tilde{Z}_{n} & n & n_{1} & n_{0} & \ominus n & A_{n} & Z_{n} & \tilde{Z}_{n}\\
0 & 0 & 0 & 0 & \mathbb{M} & X_{0} & X_{0} & 8 & 2 & 0 & 8 & \mathbb{M} & X_{8} & X_{8}\\
1 & 0 & 1 & 3 & \mathbb{K} & X_{1} & X_{1} & 9 & 2 & 1 & 11 & \mathbb{K} & X_{9} & X_{9}\\
2 & 0 & 2 & 2 & \mathbb{M} & X_{2} & X_{2} & 10 & 2 & 2 & 10 & \mathbb{M} & X_{10} & X_{10}\\
3 & 0 & 3 & 1 & \mathbb{L} & Y_{1} & Y_{3} & 11 & 2 & 3 & 9 & \mathbb{L} & Y_{9} & Y_{11}\\
4 & 1 & 0 & 12 & \mathbb{K} & X_{4} & X_{4} & 12 & 3 & 0 & 4 & \mathbb{L} & Y_{4} & Y_{12}\\
5 & 1 & 1 & 15 & \mathbb{K} & X_{5} & X_{5} & 13 & 3 & 1 & 7 & \mathbb{L} & Y_{7} & Y_{13}\\
6 & 1 & 2 & 14 & \mathbb{K} & X_{6} & X_{6} & 14 & 3 & 2 & 6 & \mathbb{L} & Y_{6} & Y_{14}\\
7 & 1 & 3 & 13 & \mathbb{K} & X_{7} & X_{7} & 15 & 3 & 3 & 5 & \mathbb{L} & Y_{5} & Y_{15}
\end{array}
\]
The following table shows $Z_{n}$ and $\tilde{Z}_{n}$ for $n_{2}=2$,
that is, $n=32,\dots,47$ (note that the column $A_{n}$ in table
below is equal to the column $A_{n}$ in the table above - that is
a consequence of Proposition \ref{prop:sep-k-l-m}).
\[
\begin{array}{cccccccc|cccccccc}
n & n_{2} & n_{1} & n_{0} & \ominus n & A_{n} & Z_{n} & \tilde{Z}_{n} & n & n_{2} & n_{1} & n_{0} & \ominus n & A_{n} & Z_{n} & \tilde{Z}_{n}\\
32 & 2 & 0 & 0 & 32 & \mathbb{M} & X_{32} & X_{32} & 40 & 2 & 2 & 0 & 40 & \mathbb{M} & X_{40} & X_{40}\\
33 & 2 & 0 & 1 & 35 & \mathbb{K} & X_{33} & X_{33} & 41 & 2 & 2 & 1 & 43 & \mathbb{K} & X_{41} & X_{41}\\
34 & 2 & 0 & 2 & 34 & \mathbb{M} & X_{34} & X_{34} & 42 & 2 & 2 & 2 & 42 & \mathbb{M} & X_{42} & X_{42}\\
35 & 2 & 0 & 3 & 33 & \mathbb{L} & Y_{33} & Y_{35} & 43 & 2 & 2 & 3 & 41 & \mathbb{L} & Y_{41} & Y_{43}\\
36 & 2 & 1 & 0 & 44 & \mathbb{K} & X_{36} & X_{36} & 44 & 2 & 3 & 0 & 36 & \mathbb{L} & Y_{36} & Y_{44}\\
37 & 2 & 1 & 1 & 47 & \mathbb{K} & X_{37} & X_{37} & 45 & 2 & 3 & 1 & 39 & \mathbb{L} & Y_{39} & Y_{45}\\
38 & 2 & 1 & 2 & 46 & \mathbb{K} & X_{38} & X_{38} & 46 & 2 & 3 & 2 & 38 & \mathbb{L} & Y_{38} & Y_{46}\\
39 & 2 & 1 & 3 & 45 & \mathbb{K} & X_{39} & X_{39} & 47 & 2 & 3 & 3 & 37 & \mathbb{L} & Y_{37} & Y_{47}
\end{array}
\]
\end{example}

\section{Main definitions and some results}


We denoted by $L^{p}=L^{p}(\Omega)$, $1\leq p\leq \infty$, the vector space consisting of all measurable functions $f$ defined on $\Omega$ and with values in $\R$, satisfying 
\[
\norm{f}_{p}=\norm{f}_{L^{p}(\Omega)}=\left ( \int_{\Omega}\abs{f(\textbf{x})}^{p}d\mu(\textbf{x}) \right)^{1/p}<\infty, \hspace{3ex} 1\leq p <\infty,
\]
\[
\norm{f}_{\infty}=\norm{f}_{L^{\infty}(\Omega)}=\textup{ess}\sup_{\textbf{x}\in \Omega} \abs{f(\textbf{x})}< \infty.
\]
\begin{proposition}\label{proposi1}
For each $1\leq j\leq d$, let $\left\{\phi_{l}^{(j)}\right\}_{l\in \N_{0}}$ be a complete orthonormal set of $L^{2}\left(\Omega_{j},\mathcal{A}_{j},\mu_{j}\right)$, of real-valued or complex-valued functions. For $\textup{\textbf{m}}=(m_{1},\ldots,m_{d})\in \N^{d}_{0}$, consider the function $\phi_{\textup{\textbf{m}}}:\Omega\to \C$ given by
\[
\phi_{\textup{\textbf{m}}}(\textup{\textbf{x}})=\phi_{m_{1}}^{(1)}(x_{1})\cdots \phi_{m_{d}}^{(d)}(x_{d}), \hspace{2ex} \textup{\textbf{x}}=(x_{1},\ldots,x_{d})\in \Omega.
\]
Then $\left\{\phi_{\textup{\textbf{m}}}\right\}_{\textup{\textbf{m}}\in \N_{0}^{d}}$ is a complete orthonormal set of $L^{2}(\Omega,\mathcal{A}, \mu)$.
\end{proposition}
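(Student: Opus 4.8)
The plan is to verify orthonormality directly via Fubini–Tonelli and to establish completeness by induction on $d$, reducing the inductive step to the case $d=2$.

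For orthonormality, first observe that each $\phi_{\mathbf m}$ actually lies in $L^2(\Omega)$: since $\mu=\mu_1\times\cdots\times\mu_d$, Tonelli's theorem gives $\int_\Omega|\phi_{\mathbf m}|^2\,d\mu=\prod_{j=1}^d\int_{\Omega_j}|\phi_{m_j}^{(j)}|^2\,d\mu_j=1$. Applying the same factorization to $\phi_{\mathbf m}\overline{\phi_{\mathbf n}}$ — legitimate by Fubini, because $\phi_{\mathbf m}\overline{\phi_{\mathbf n}}\in L^1(\Omega)$ by Cauchy--Schwarz — yields $\int_\Omega\phi_{\mathbf m}\overline{\phi_{\mathbf n}}\,d\mu=\prod_{j=1}^d\int_{\Omega_j}\phi_{m_j}^{(j)}\overline{\phi_{n_j}^{(j)}}\,d\mu_j=\prod_{j=1}^d\delta_{m_jn_j}=\delta_{\mathbf m\mathbf n}$.

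For completeness I would work throughout in the complex space $L^2$; this covers both the real-valued and the complex-valued cases, since a complete orthonormal system of \emph{real}-valued functions remains complete as a subset of the complex $L^2$ (if $f=u+iv$ is orthogonal to all the real $\phi_l^{(j)}$ then $u$ and $v$ are separately orthogonal to them, hence both vanish a.e.). The base case $d=1$ is the hypothesis. For $d=2$, write $\Omega=\Omega_1\times\Omega_2$, let $f\in L^2(\Omega)$ satisfy $\int_\Omega f\,\overline{\phi_{\mathbf m}}\,d\mu=0$ for all $\mathbf m=(m_1,m_2)$, and set $g_n(x_1)=\int_{\Omega_2}f(x_1,x_2)\overline{\phi_n^{(2)}(x_2)}\,d\mu_2(x_2)$. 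By Fubini, $f(x_1,\cdot)\in L^2(\Omega_2)$ for $\mu_1$-a.e.\ $x_1$, so $g_n$ is well defined, measurable, and in $L^2(\Omega_1)$, and $\int_{\Omega_1}g_n\,\overline{\phi_m^{(1)}}\,d\mu_1=\int_\Omega f\,\overline{\phi_{(m,n)}}\,d\mu=0$ for every $m$. Completeness of $\{\phi_m^{(1)}\}$ forces $g_n=0$ $\mu_1$-a.e.\ for each $n$; intersecting the countably many exceptional null sets, for $\mu_1$-a.e.\ $x_1$ the function $f(x_1,\cdot)$ is orthogonal to every $\phi_n^{(2)}$, hence $f(x_1,\cdot)=0$ $\mu_2$-a.e.\ by completeness of $\{\phi_n^{(2)}\}$, and Fubini gives $f=0$ $\mu$-a.e. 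For the inductive step, group $\Omega=(\Omega_1\times\cdots\times\Omega_{d-1})\times\Omega_d$, let $\{\phi'_{(m_1,\dots,m_{d-1})}\}$ be the product system on the first $d-1$ factors, which is complete by the inductive hypothesis, and apply the $d=2$ argument with $\phi'$ playing the role of $\phi^{(1)}$ and $\phi^{(d)}$ that of $\phi^{(2)}$; since $\phi_{\mathbf m}=\phi'_{(m_1,\dots,m_{d-1})}\cdot\phi_{m_d}^{(d)}$, the conclusion follows.

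The only point requiring care is the measure-theoretic bookkeeping in the $d=2$ step: one must check that $g_n$ is measurable and square-integrable and that the $\mu_1$-null sets produced (one for each $\phi_n^{(2)}$, together with the set where $f(x_1,\cdot)\notin L^2(\Omega_2)$) can be merged into a single null set, which is possible because there are only countably many of them. This is routine Fubini–Tonelli material for finite product measures, so I do not anticipate a genuine obstacle; the substantive inputs are the completeness of each one-dimensional system and the finiteness of the $\mu_j$.
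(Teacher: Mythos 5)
Your proof is correct and follows essentially the same route as the paper's: orthonormality by factoring the integral over the product measure, and completeness in the case $d=2$ by integrating out one factor, invoking completeness of each one-dimensional system, merging the countably many exceptional null sets, and then inducting on $d$. Your additional remarks (the explicit Fubini--Tonelli justification of orthonormality and the observation that a real complete orthonormal system stays complete in the complex $L^{2}$) only make explicit what the paper leaves implicit.
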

\begin{proof}
    It is immediate that the set $\{\phi_{\textup{\textbf{m}}}\}_{\textup{\textbf{m}}\in \N_{0}^{d}}$ is orthonormal. Consider $d=2$ and let $f\in L^{2}(\Omega, \mathcal{A}, \mu)$ such that
    \[
    \int_{\Omega} f(\textup{\textbf{x}})\overline{\phi_{\textup{\textbf{m}}}}(\textup{\textbf{x}})d\mu (\textup{\textbf{x}})=0
    \]
    for all $\textup{\textbf{m}}=(m_{1},m_{2})\in \N_{0}^{2}$. Then fixed $m_{2}\in\N_{0}$, for all $m_{1}\in \N_{0}$ we have
    \[
        0=\int_{\Omega_{1}}\left(\int_{\Omega_{2}} f(x_{1},x_{2})\overline{\phi_{m_{2}}^{(2)}}(x_{2})d\mu_{2} (x_{2}) \right)\overline{\phi_{m_{1}}^{(1)}}(x_{1})d\mu_{1} (x_{1}).
    \]
      Let 
    \[
    E_{m_{2}}=\left\{x_{1}\in \Omega_{1}: F_{m_2}\left(x_{1}\right) =\int_{\Omega_{2}}f(x_{1},x_{2})\overline{\phi_{m_{2}}^{(2)}}(x_{2})d\mu_{2} (x_{2}) \neq 0 \right\}.
    \]
    Since $\left\{\phi_{k}^{(1)}\right\}_{k\in\N_{0}}$ is a complete orthonormal set of $L^{2}\bigl(\Omega_{1},\mathcal{A}_{1},\mu_{1}\bigl)$, the function $F_{m_2}$ is null almost everywhere and thus $\mu_{1}\bigl(E_{m_{2}}\bigl)=0$. If $E=\cup_{m_{2}\in\N_{0}}E_{m_{2}}$ then $\mu_{1}(E)=0$ and $F_{m_2}\left(x_1\right)=0$ for all $x_{1}\in \Omega_{1}\setminus E$ and all $m_{2}\in \N_{0}$. Therefore, again by hypothesis, we have that $f(x_{1},\cdot)=0$ a.e. for all $x_{1}\in \Omega_{1} \setminus E$ and then
    \[
    \int_{\Omega}\abs{f(\textup{\textbf{x}})}d\mu(\textup{\textbf{x}})=\int_{\Omega_{1}\setminus E} \left(\int_{\Omega_{2}} \abs{f(x_{1},x_{2})}d\mu_{2}(x_{2}) \right)d\mu_{1}(x_{1})=0.
    \]
    Hence $f=0$ in $L^{2}(\Omega)$. The general case we prove proceeding by induction on $d$.
\end{proof}

From this point on, we will consider a complete orthonormal set of real-valued functions $\{\phi_{\textup{\textbf{m}}}\}_{\textup{\textbf{m}}\in\N_{0}^{d}}$ of \linebreak $L^{2}(\Omega, \mathcal{A},\mu)$ satisfying the following condition: there is a constant $K>0$ such that $\norm{\phi_{\textup{\textbf{m}}}}_{\infty}\leq K$ for all $\textup{\textbf{m}}\in \N_{0}^{d}$.

If for each $1\leq j\leq d$, $\left\{\phi_{l}^{(j)}\right\}_{l\in \N_{0}}$ is a complete orthonormal set of real-valued  functions of $L^{2}\left(\Omega_{j},\mathcal{A}_{j},\mu_{j}\right)$ and for each $1\leq j \leq d$, there is a constant $K_{j}>0$ such that $\norm{\phi_{l}^{(j)}}_{\infty}\leq K_{j}$ for all $l\in \N_{0}$, then the complete orthonormal set of $L^{2}(\Omega, \mathcal{A}, \mu)$ with the functions $\phi_{\textup{\textbf{m}}}$ as in the Proposition \ref{proposi1}, satisfies $\norm{\phi_{\textup{\textbf{m}}}}_{\infty}\leq K$ for \linebreak $K=\sup_{1\leq j\leq d}K_{j}$ and for all $\textup{\textbf{m}}\in\N^{d}_{0}$.

We write $U_{p}=\left\lbrace \varphi \in L^{p}: \norm{\varphi}_{p}\leq 1\right\rbrace$. For $f\in L^{1}(\Omega)$ the $d$-dimensional Fourier series of $f$ is given by 
\[
\sum_{\textbf{m}\in \N_{0}^{d}}\widehat{f}(\textbf{m})\phi_{\textbf{m}},\hspace{2ex} \widehat{f}(\textbf{m})=\int_{\Omega} f(\textbf{x})\phi_{\textbf{m}}(\textbf{x})d\mu (\textbf{x}).
\]
For $f\in L^{1}(\Omega)$ and $R>0$, we define the spherical partial sum of the Fourier series of the function $f$ by 
\[
S_{R}(f)=\sum_{{\shortstack{$\scriptstyle {\textbf{m}\in \N_{0}^{d}}$\\
			$\scriptstyle {\abs{\textbf{m}}\leq R}$}}}\widehat{f}(\textbf{m})\phi_{\textbf{m}}.
\]

 Let $A$ be a compact subset of a Banach space $X$ and let $\epsilon>0$. A finite subset $S=\{x_{1}, x_{2},\ldots,x_{m}\}$ of $X$ is called an $\epsilon$-net for $A$ in $X$ if, for each $x\in A$, there is at least one point $x_{k}\in S$ such that $\norm{x_{k}-x}\leq \epsilon$, that is, $A\subset \cup_{k=1}^{m}(x_{k}+\epsilon B_{X})$. The set $S=\left\{x_{1}, x_{2},\ldots,x_{m}\right\}$ is called an $\epsilon$-distinguishable subset of $A$ in $X$, if $S\subset A$ and $\norm{x_{i}-x_{j}}>\epsilon$ for all $1\leq i$, $j\leq m$, $i\neq j$. If every $\epsilon$-distinguishable subset of $A$ in $X$ has at most $m$ elements, we say that $S$ is a maximal $\epsilon$-distinguishable subset of $A$ in $X$. A maximal $\epsilon$-distinguishable subset of $A$ in $X$ is a $\epsilon$-net for $A$ in $X$.

Let $X, Y, Z, Y_{1}$ be Banach spaces and $T,S\in \mathcal{L}(X,Y)$,  $R\in \mathcal{L}(Y,Z)$. Then for all $k,l \in\N$
(see \cite{Edmunds} or \cite{Pisier})
\begin{equation}\label{ecuaentroart2}
e_{k+l-1}\left(T+S\right)\leq e_{k}\left(T\right)+e_{l}\left(S\right) 
\end{equation}
and
\begin{equation}\label{multiplicative}
e_{k+l-1}\left(R\circ S\right)\leq e_{k}\left(R\right) e_{l}\left(S\right).
\end{equation}
Suppose $Y$ is isometric to a subspace of $Y_{1}$ and denote by $i:Y\to Y_{1}$ the isometric embedding. Then (see \cite[Proposition 5.1]{Pisier})
\begin{equation}\label{ecuaentroart1}
2^{-1}e_{k}\left(T\right)\leq e_{k}\left(i\circ T\right)\leq e_{k}\left(T\right), \hspace{1ex} k\in\N.
\end{equation}
\begin{remark}
 For $d\in\mathbb{N}$ and $R>0$ let $B_R=\left\{\textup{\textbf{x}}\in \mathbb{R}^d : \abs{\textup{\textbf{x}}} \leq R\right\}$. The number of points of integer coordinates contained in the ball $B_R$ is given by 
$$
N_d(R)=\left(\dfrac{2\pi^{d/2}}{d\varGamma(d/2)} \right)R^d+ E_d(R),
$$
where $E(R)$ is an error term. We denote 
$$ 
{\theta}_d = \inf \left\{ \alpha : E_d(R)={\cal O}(R^{\alpha})\right\}.
$$
For $d=1$ we have ${\theta}_1=0$ and it is known that for $d\geq 4$, ${\theta}_d=d-2$. The value of ${\theta}_d$ remains open for
$d=2$ and $d=3$. It is known that $1/2\leq {\theta}_2 \leq 1$ and it was proved in \cite{Hu} that, for each ${\theta} > 131/208$, there is a constant $C>0$ such that $E_{2}(R)\leq CR^{\theta}$, for all $R>0$. In the case $d=3$ we have $1\leq {\theta}_3 \leq 2$ and the estimate  $E_3(R)\leq CR^{\theta}$ is true for  ${\theta}=21/16$ (see \cite{HB}).

As consequence of the estimates for the number of points with integer coordinates in $B_{R}$, there is a constant $C_d$, depending only on $d$, such that, for all $l\in \mathbb{N}$,
\begin{eqnarray}\label{cardinalidade-1}
 &&2^{-d} N_d(l) \leq \# A_l \leq 2^{-d}N_d(l) + d2^{-d} N_{d-1}(l) + C_d l^{d-2},\\
\vspace{15ex}
 &&2^{-d} \left(N_d(l) - N_d(l-1) \right) \leq  \# A_l - \# A_{l-1} \nonumber\\ 
  \label{cardinalidade-2} &&\leq 2^{-d} \left(N_d(l) - N_d(l-1) \right) + d2^{-d} \left(N_{d-1}(l) - N_{d-1}(l-1) \right) + C_dl^{d-2}.
\end{eqnarray}
\end{remark}

\begin{proposition}\label{proposicionaplicaciones1}
Given $d \geq 2$, let $\theta= 132/208$ if $d=2$,  $\theta= 21/16$ if $d=3$ and $\theta=d-2$ if $d\geq 4$.
Then there are positive constants $C^{'}$ and $C$, such that, for all $l, N\in \mathbb{N}$,
\begin{equation}\label{contagem-1}
\frac{2^{1-d} \pi^{d/2}}{\varGamma(d/2)}l^{d-1}- C^{'}l^{\theta} \leq d_l \leq
\frac{2^{1-d} \pi^{d/2}}{\varGamma(d/2)}l^{d-1}+ C^{'}l^{\theta},
\end{equation}
\begin{equation}\label{contagem-2}
\frac{2^{1-d} \pi^{d/2}}{d\varGamma(d/2)}N^{d} \leq \dim{{\cal T}_N} \leq \frac{2^{1-d} \pi^{d/2}}{d\varGamma(d/2)}N^{d}+CN^{d-1}
\end{equation}
and
\begin{equation} \label{contagem-3}
\frac{1}{\dim{{\cal T}_N}}\geq \frac{1}{FN^d}-\frac{C}{F^2N^{d+1}},
\end{equation}
where $F={2^{1-d} \pi^{d/2}}/\left({d\varGamma(d/2)}\right)$. In particular, $d_l\asymp l^{d-1}$ and $\dim{{\cal T}_N}\asymp N^d$.
\end{proposition}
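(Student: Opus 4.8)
The plan is to reduce the whole statement to counting lattice points with nonnegative coordinates inside Euclidean balls. Since $\{\phi_{\textbf{k}}\}_{\textbf{k}\in\N_0^d}$ is orthonormal, hence linearly independent, $d_l=\dim\mathcal{H}_l=\#(A_l\setminus A_{l-1})$; and since $A_{l-1}\subset A_l$ for $l\ge1$ and $A_{-1}=\emptyset$, summing the telescoping sum over $0\le l\le N$ gives
\[
d_l=\#A_l-\#A_{l-1},\qquad \dim\mathcal{T}_N=\sum_{l=0}^{N}d_l=\#A_N .
\]
I will also use that $\frac{2\pi^{d/2}}{d\varGamma(d/2)}R^d=\operatorname{vol}(B_R)$, so $F=2^{-d}\operatorname{vol}(B_1)$ and $2^{-d}N_d(R)=FR^d+2^{-d}E_d(R)$, together with the bounds recalled in the Remark: $|E_d(R)|\le CR^{\theta}$ with the exponent $\theta$ from the statement, and $|E_{d-1}(R)|=\mathcal O(R^{d-2})$ in every case (since $\theta_1=0$, $\theta_2\le1$, $\theta_3\le 21/16$, and $\theta_{d-1}=d-3$ for $d\ge5$, all $\le d-2$).

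For \eqref{contagem-2} I would establish a two-sided volume estimate for $\#A_N$. The half-open cubes $Q_{\textbf{m}}=\prod_{j=1}^d[m_j,m_j+1)$, $\textbf{m}\in A_N$, are pairwise disjoint, each of volume $1$; their union contains the positive-orthant part $B_N^{+}$ of $B_N$ (if $\textbf{x}\in B_N^{+}$ then $\lfloor\textbf{x}\rfloor\in\N_0^d$ with $|\lfloor\textbf{x}\rfloor|\le|\textbf{x}|\le N$, so $\textbf{x}\in Q_{\lfloor\textbf{x}\rfloor}$ and $\lfloor\textbf{x}\rfloor\in A_N$), and is contained in $B_{N+\sqrt d}^{+}$ (if $\textbf{x}\in Q_{\textbf{m}}$ then $0\le x_j<m_j+1$, whence $|\textbf{x}|<N+\sqrt d$ by Cauchy--Schwarz). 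Hence
\[
FN^d=\operatorname{vol}(B_N^{+})\le \#A_N\le \operatorname{vol}(B_{N+\sqrt d}^{+})=F(N+\sqrt d)^d\le FN^d+CN^{d-1},
\]
the last inequality by expanding $(N+\sqrt d)^d$ and using $N\ge1$; this is \eqref{contagem-2}. Then \eqref{contagem-3} is immediate: writing $\dim\mathcal T_N\le FN^d\bigl(1+\tfrac{C}{FN}\bigr)$ and using $(1+x)^{-1}\ge1-x$ for $x\ge0$ gives $1/\dim\mathcal T_N\ge \frac1{FN^d}-\frac{C}{F^2N^{d+1}}$.

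For \eqref{contagem-1} a naive volume estimate is useless, because the boundary term of a spherical shell has the same order $l^{d-1}$ as its volume; instead I would feed the Gauss-type lattice estimates into \eqref{cardinalidade-2}. From $N_d(l)-N_d(l-1)=2^dF\bigl(l^d-(l-1)^d\bigr)+E_d(l)-E_d(l-1)$ and $l^d-(l-1)^d=dl^{d-1}+\mathcal O(l^{d-2})$ one obtains
\[
2^{-d}\bigl(N_d(l)-N_d(l-1)\bigr)=dFl^{d-1}+\mathcal O(l^{d-2})+\mathcal O(l^{\theta})=\frac{2^{1-d}\pi^{d/2}}{\varGamma(d/2)}l^{d-1}+\mathcal O(l^{\theta}),
\]
using $dF=2^{1-d}\pi^{d/2}/\varGamma(d/2)$ and $d-2\le\theta$. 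This gives the lower bound in \eqref{contagem-1}; for the upper bound, the additional terms in \eqref{cardinalidade-2}, namely $d2^{-d}\bigl(N_{d-1}(l)-N_{d-1}(l-1)\bigr)$ and $C_dl^{d-2}$, are $\mathcal O(l^{d-2})=\mathcal O(l^{\theta})$, so the same main term and error order follow. Finally $d_l\asymp l^{d-1}$ and $\dim\mathcal T_N\asymp N^d$, since $\theta<d-1$ in all three cases.

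Most of this is bookkeeping of error exponents; the single step requiring a genuine (if short) idea is the error-free lower bound $FN^d\le\dim\mathcal T_N$, obtained above by the floor map onto $A_N$. I emphasize that it cannot be extracted from the left-hand inequality of \eqref{cardinalidade-1} alone, because $E_d(R)$ changes sign and hence $2^{-d}N_d(l)$ is not bounded below by $Fl^d$.
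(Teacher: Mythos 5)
Your proof is correct. For \eqref{contagem-1} and \eqref{contagem-3} you follow essentially the same route as the paper: insert the shell estimate $N_d(l)-N_d(l-1)=\frac{2\pi^{d/2}}{d\varGamma(d/2)}\bigl(l^d-(l-1)^d\bigr)+E_d(l)-E_d(l-1)$ into \eqref{cardinalidade-2}, absorb $l^d-(l-1)^d-dl^{d-1}=\mathcal O(l^{d-2})$ and the $N_{d-1}$-terms into $\mathcal O(l^{\theta})$ using $d-2\leq\theta<d-1$, and obtain \eqref{contagem-3} from the upper half of \eqref{contagem-2} via $(1+x)^{-1}\geq 1-x$. Where you genuinely depart from the paper is \eqref{contagem-2}: the paper derives it from \eqref{cardinalidade-1} together with $N_d(N)=\frac{2\pi^{d/2}}{d\varGamma(d/2)}N^d+E_d(N)$, and, as you correctly point out, the left inequality of \eqref{cardinalidade-1} only gives $\#A_N\geq FN^d+2^{-d}E_d(N)$, which yields the error-free bound $FN^d\leq\dim\mathcal T_N$ only if one knows $E_d(N)\geq 0$ --- not guaranteed, since $E_d$ changes sign. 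Your replacement, packing the disjoint half-open unit cubes $Q_{\mathbf m}$, $\mathbf m\in A_N$, between $B_N^{+}$ and $B_{N+\sqrt d}^{+}$ and comparing volumes, gives both inequalities of \eqref{contagem-2} cleanly and elementarily, without invoking \eqref{cardinalidade-1} at all. This is more than a cosmetic improvement: the exact lower bound $n\geq FN^d$ (with no error term) is what the paper later uses in the proof of Theorem \ref{teoremainfinitasentropia} to pin down the sharp constant $\mathcal C$, so your argument actually repairs a small gap in the paper's derivation while buying the same conclusions.
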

\begin{proof}
We have
\begin{equation*}
N_d(l)-N_d(l-1)
= \frac{2 \pi^{d/2}}{d\varGamma(d/2)}(l^d - (l-1)^{d})+(E_d(l) - E_d(l-1))
\end{equation*}
and
$$ dl^{d-1} - \frac{d(d-1)}{2}l^{d-2}\leq l^d - (l-1)^{d} \leq dl^{d-1},$$
therefore
$$
\frac{2 \pi^{d/2}}{\varGamma(d/2)}l^{d-1} - \frac{ \pi^{d/2}(d-1)}{\varGamma(d/2)}l^{d-2}  + E_d(l) - E_d(l-1) \leq  
N_d(l) - N_d(l-1).
$$
By hypothesis $|E_d(l) - E_d(l-1)| \leq C_1l^{\theta}$ and thus
\begin{equation}\label{r2.2}
\frac{2 \pi^{d/2}}{\varGamma(d/2)}l^{d-1}-C_2l^{\theta} \leq N_d(l) - N_d(l-1) 
\leq \frac{2 \pi^{d/2}}{\varGamma(d/2)}l^{d-1} + C_1 l^{\theta},
\end{equation}
since $d-2 \leq \theta < d-1$. Then, using (\ref{r2.2}) and (\ref{cardinalidade-2}) we get
\begin{equation*}\label{r2.3}
\frac{2^{1-d} \pi^{d/2}}{\varGamma(d/2)}l^{d-1}-C_3l^{\theta} \leq \# A_l - \# A_{l-1}=d_l.
\end{equation*}
Again using (\ref{r2.2}) and (\ref{cardinalidade-2}),
\begin{eqnarray*} \label{r2.4}
d_l& \leq& 2^{-d} \left(N_d(l) - N_d(l-1) \right) + d2^{-d} \left(N_{d-1}(l) - N_{d-1}(l-1) \right) + C_dl^{d-2}\nonumber \\
&\leq& \frac{2^{1-d} \pi^{d/2}}{\varGamma(d/2)}l^{d-1} + 2^{-d}C_1 l^{\theta} +
\frac{d2^{1-d} \pi^{(d-1)/2}}{\varGamma((d-1)/2)}l^{d-2} + C_4 l^{\alpha} + C_dl^{d-2} \nonumber\\
&\leq& \frac{2^{1-d} \pi^{d/2}}{\varGamma(d/2)}l^{d-1} + C_5 l^{\theta}, 
\end{eqnarray*}
where $d-3 \leq {\theta}_{d-1}< \alpha <d-2$.
Hence the estimates in (\ref{contagem-1}) are proved.
 
On the other hand, since $E_d(N) \leq C_6 N^{\theta} \leq C_6 N^{d-1}$,
$N_d(N)=\left({2 \pi^{d/2}}/({d\varGamma(d/2)})\right) N^d+ E_d(N)$ and  $\dim{{\cal T}_N}= \# A_N$, it 
follows by (\ref{cardinalidade-1}) that $
\left(2^{1-d} \pi^{d/2}/d\varGamma(d/2) \right)N^d \leq \dim{{\cal T}_N}$ and
\begin{eqnarray*}
\dim{{\cal T}_N} &\leq&  \left(\dfrac{2^{1-d} \pi^{d/2}}{d\varGamma(d/2)} \right)N^d + 2^{-d}C_7N^{\theta} + 
\left(\dfrac{d2^{1-d} \pi^{(d-1)/2}}{(d-1)\varGamma((d-1)/2)} \right)N^{d-1} + C_{8} N^{d-2}\\
&\leq& \left(\dfrac{2^{1-d} \pi^{d/2}}{d\varGamma(d/2)} \right)N^d + C_{9}N^{d-1},
\end{eqnarray*}
and we get (\ref{contagem-2}).

The estimate (\ref{contagem-3}) follows immediately from (\ref{contagem-2}). As consequence of (\ref{contagem-1}) and (\ref{contagem-2}) we have that  $d_l\asymp l^d$ and $\dim{{\cal T}_N}\asymp N^d$.
\end{proof}

The Proposition \ref{proposicionaplicaciones1} is applied in the proof of Theorems \ref{aplicacionesentropiafinitas1} and \ref{teoremainfinitasentropia}. To prove these theorems we use all the estimates (\ref{contagem-1}), (\ref{contagem-2}) and  (\ref{contagem-3}).
The constants in these estimates are not important in the proof of Theorem \ref{aplicacionesentropiafinitas1}, but they are fundamental in the proof of Theorem \ref{teoremainfinitasentropia}. The constant $\mathcal{C}$ in Theorem \ref{teoremainfinitasentropia}, with which we obtain sharp estimates for entropy numbers, is constructed using these constants. Also the 
values of $\theta$ determined in the proposition play an important role in Theorem \ref{teoremainfinitasentropia}.

Let $\Lambda=\lbrace \lambda_{\textbf{k}}\rbrace_{\textbf{k}\in \N^{d}_{0}}$, $\lambda_{\textbf{k}}\in \R$, and $1\leq p,q \leq \infty$. If for all $\varphi \in L^{p}(\Omega)$ there is a function $f=\Lambda \varphi \in L^{q}(\Omega)$ with formal Fourier expansion given by
\[
f\sim \sum_{\textbf{k}\in \N^{d}_{0}}\lambda_{\textbf{k}}\widehat{\varphi}(\textbf{k})\phi_{\textbf{k}},
\]
such that $\norm{\Lambda}_{p,q}:=\sup\lbrace \norm{\Lambda\varphi}_{q}: \varphi\in U_{p}\rbrace<\infty$, we say that the multiplier operator $\Lambda$ is bounded from $L^{p}(\Omega)$ into $L^{q}(\Omega)$, with norm $\norm{\Lambda}_{p,q}$. Let $\lambda:[0,\infty)\to \R$ be a function and for every $\textbf{k}\in \N^{d}_{0}$ let $
\lambda_{\textbf{k}}=\lambda\left(\abs{\textbf{k}}\right), $ $\lambda_{\textbf{k}}^{\ast}=\lambda\left(\abs{\textbf{k}}_{\ast}\right)$. In this paper, we will consider only multiplier operators associated with sequences of the type $\Lambda=\left\lbrace \lambda_{\textbf{k}}\right\rbrace_{\textbf{k}\in\N^{d}_{0}}$ and $\Lambda_{\ast}=\left\lbrace \lambda_{\textbf{k}}^{\ast}\right\rbrace_{\textbf{k}\in\N^{d}_{0}}$.

Let us write $\norm{x}_{2}=\left(\sum_{i=1}^{n}\abs{x_{i}}^{2}\right)^{1/2}$, for the euclidean norm of the element $x=(x_{1},\ldots,x_{n})\in \R^{n}$ and by $\mathbb{S}^{n-1}$ the unit euclidean sphere $\left\{x\in \R^{n}: \norm{x}_{(2)}=1\right\}$ in $\R^{n}$.
The Levy mean of a norm $\norm{\cdot}$ on $\R^{n}$ is defined by 
\[
M(\norm{\cdot}):=\Biggl(\int_{\mathbb{S}^{n-1}}\norm{x}^{2} d\sigma(x) \Biggl)^{1/2},
\]
where $d\sigma(x)$ denotes the normalized Lebesgue measure on $\mathbb{S}^{n-1}$.

Given $M_{1},M_{2}\in \N$, with $M_{1}<M_{2}$, we will use the following notations:
\[
\mathcal{T}_{M_{1},M_{2}}=\bigoplus_{l=M_{1}+1}^{M_{2}}\mathcal{H}_{l} \hspace{2ex} \textup{ and } \hspace{2ex} n=\dim \mathcal{T}_{M_{1},M_{2}}.
\]
\begin{remark}Let $A_{l}\setminus A_{l-1}=\left\{ \textup{\textbf{m}}^{l}_{j}:1\leq j\leq d_{l}\right\}$ where the elements $\textup{\textbf{m}}^{l}_{j}$ are chosen satisfying $\left\vert\textup{\textbf{m}}^{l}_{j}\right\vert\leq \left\vert\textup{\textbf{m}}^{l}_{j+1}\right\vert$ for $1\leq j\leq d_{l}-1$. Then $\left\lbrace \phi_{\textup{\textbf{m}}^{l}_{j}}:1\leq j\leq d_{l}\right\rbrace$ is an orthonormal basis of $\mathcal{H}_{l}$. We consider the orthonormal basis 
\[
\Upsilon=\Upsilon_{M_{1},M_{2}}=\left\lbrace\phi_{j}^{l}=\phi_{\textup{\textbf{m}}^{l}_{j}}:M_{1}+1\leq l\leq M_{2},1\leq j\leq d_{l}\right\rbrace
\]
of $\mathcal{T}_{M_{1},M_{2}}$ endowed with the order $\phi_{1}^{M_{1}+1},\ldots,\phi_{d_{M_{1}+1}}^{M_{1}+1},\phi_{1}^{M_{1}+2},\ldots,\phi_{d_{M_{1}+2}}^{M_{1}+2}, \ldots,\phi_{1}^{M_{2}},\ldots,\phi_{d_{M_{2}}}^{M_{2}}$. We denote 
\[
\xi_{k}=\phi_{t}^{l+1},\hspace{2ex} k=t+\sum_{j=M_{1}+1}^{l}d_{j},\hspace{2ex} 1\leq t\leq d_{l+1},\hspace{2ex} M_{1}\leq l< M_{2},
\]
and hence $\Upsilon=\lbrace \xi_{k}\rbrace_{k=1}^{n}$. Let $J:\R^{n}\to \mathcal{T}_{M_{1},M_{2}}$ be the coordinate isomorphism that assigns to \linebreak $\alpha=(\alpha_{1},\alpha_{2},\ldots,\alpha_{n})\in \R^{n}$ the function
\[
J(\alpha)=J(\alpha_{1},\ldots,\alpha_{n})=\sum_{k=1}^{n}\alpha_{k}\xi_{k}\in \mathcal{T}_{M_{1},M_{2}}.
\]
Consider a function $\lambda: [0,\infty)\to \R$ such that, $\lambda(t)\neq 0$, for $t\geq 0$, and let $\Lambda=\lbrace \lambda_{\textup{\textbf{k}}}\rbrace_{\textup{\textbf{k}}\in \N_{0}^{d}}$ be the sequence of multipliers defined by $\lambda_{\textup{\textbf{k}}}=\lambda(\abs{\textup{\textbf{k}}})$. Consider $\left\lbrace\lambda^{l}_{j}=\lambda_{\textup{\textbf{m}}^{l}_{j}}: M_{1}+1\leq l\leq M_{2},1\leq j\leq d_{l}\right\rbrace$ endowed with the order $\lambda_{1}^{M_{1}+1},\ldots,\lambda_{d_{M_{1}+1}}^{M_{1}+1}$, $\lambda_{1}^{M_{1}+2},\ldots,\lambda_{d_{M_{1}+2}}^{M_{1}+2},\ldots,\lambda_{1}^{M_{2}},\ldots,\lambda_{d_{M_{2}}}^{M_{2}}$. We denote
\[
\lambda_{k}=\lambda^{l+1}_{t}=\lambda\left(\left\vert\textup{\textbf{m}}^{l+1}_{t}\right\vert\right),\hspace{1ex} k=t+\sum_{j=M_{1}+1}^{l}d_{j},\hspace{1ex}1\leq t\leq d_{l+1},\hspace{1ex}M_{1}\leq l < M_{2}
\]
and $\Lambda_{n}=\lbrace \lambda_{k}\rbrace_{k=1}^{n}$. Now consider the multiplier operator $\Lambda_{n}$ defined on $\mathcal{T}_{M_{1},M_{2}}$ by
\begin{equation}\label{operador1}
\Lambda_{n}\left (\sum_{j=1}^{n} \alpha_{j}\xi_{j} \right)=\sum_{j=1}^{n}\lambda_{j}\alpha_{j}\xi_{j}.
\end{equation}
Also, we define the multiplier operator $\widetilde{\Lambda}_{n}$ on $\R^{n}$ by
\begin{equation}\label{operador2}
\widetilde{\Lambda}_{n}(\alpha)=\widetilde{\Lambda}_{n}(\alpha_{1},\ldots,\alpha_{n})=(\lambda_{1}\alpha_{1},\ldots,\lambda_{n}\alpha_{n}).
\end{equation}
Given $\alpha\in\R^{n}$ and $1\leq p\leq \infty$, we define
\[
\norm{\alpha}_{(p)}:=\norm{J(\alpha)}_{p}
\]
and we have that the map $\R^{n}\ni \alpha\to \norm{\alpha}_{(p)}$ is a norm on $\R^{n}$. We will denote
\[
B^{n}_{p}=\lbrace \varphi\in\mathcal{T}_{M_{1},M_{2}}: \norm{\varphi}_{p}\leq 1\rbrace,
\hspace{2ex}
B^{n}_{(p)}=\lbrace \alpha\in\R^{n}: \norm{\alpha}_{(p)}\leq 1\rbrace.
\]

\end{remark}

The following result is a simple generalization of a particular case of the Theorem 5, p. 1301 in \cite{Sergio}.

\begin{theorem}\label{teorema-media-1}
Let $n=\dim \mathcal{T}_{M_{1},M_{2}}$, $\Upsilon_{M_{1},M_{2}}=\lbrace\xi_{k}\rbrace_{k=1}^{n}$ be the orthonormal system  of $\mathcal{T}_{M_{1},M_{2}}$. Then there is an absolute constant $C>0$ such that, for $2\leq p\leq \infty$,
\begin{equation*}
    1\leq M\left(\norm{\cdot}_{(p)}\right)\leq C  \left\{  \begin{array}{ll}
p^{1/2},&\textbf{$2\leq p <\infty $},\\
(\log_{2} n)^{1/2},& \textbf{$ p=\infty$}.
\end{array}
\right.
\end{equation*}
\end{theorem}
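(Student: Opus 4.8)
The plan is to prove the two inequalities separately. The lower bound is immediate: $\mu$ is a probability measure, so $\norm{g}_{2}\le\norm{g}_{p}$ for $g\in L^{p}(\Omega)$ when $p\ge 2$, and since $\lbrace\xi_{k}\rbrace_{k=1}^{n}$ is orthonormal, $\norm{J(u)}_{2}=\norm{u}_{2}$. Hence for $u\in\mathbb{S}^{n-1}$ one has $\norm{u}_{(p)}=\norm{J(u)}_{p}\ge\norm{J(u)}_{2}=1$, so $M(\norm{\cdot}_{(p)})^{2}=\int_{\mathbb{S}^{n-1}}\norm{u}_{(p)}^{2}\,d\sigma(u)\ge 1$. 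For the upper bounds I will use that, $\sigma$ being a probability measure and $p\ge 2$, $M(\norm{\cdot}_{(p)})\le\bigl(\int_{\mathbb{S}^{n-1}}\norm{J(u)}_{p}^{p}\,d\sigma(u)\bigr)^{1/p}$ for every finite $p\ge 2$; one may also assume throughout that $n\ge 2$, the case of bounded $n$ being trivial.

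For $2\le p<\infty$ I would evaluate the last integral by interchanging orders of integration. Write it as $\int_{\mathbb{S}^{n-1}}\int_{\Omega}\abs{\sum_{k=1}^{n}u_{k}\xi_{k}(x)}^{p}\,d\mu(x)\,d\sigma(u)$ and, the integrand being nonnegative, integrate first in $u$: with $\Theta(x)=\sum_{k=1}^{n}\xi_{k}(x)^{2}$ and the rotational invariance of $\sigma$, $\int_{\mathbb{S}^{n-1}}\abs{\sum_{k}u_{k}\xi_{k}(x)}^{p}\,d\sigma(u)=\Theta(x)^{p/2}\int_{\mathbb{S}^{n-1}}\abs{u_{1}}^{p}\,d\sigma(u)$, and a standard Beta-function computation gives $\int_{\mathbb{S}^{n-1}}\abs{u_{1}}^{p}\,d\sigma(u)\le(C_{0}\,p/n)^{p/2}$ with $C_{0}$ absolute. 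From $\norm{\phi_{\mathbf{m}}}_{\infty}\le K$ we have $\Theta(x)\le nK^{2}$ pointwise, while $\int_{\Omega}\Theta\,d\mu=\sum_{k}\norm{\xi_{k}}_{2}^{2}=n$, so $\int_{\Omega}\Theta(x)^{p/2}\,d\mu(x)\le(nK^{2})^{p/2-1}\,n=n^{p/2}K^{p-2}$. Combining, $\int_{\mathbb{S}^{n-1}}\norm{J(u)}_{p}^{p}\,d\sigma\le(C_{0}p)^{p/2}K^{p-2}$, so $M(\norm{\cdot}_{(p)})\le(C_{0}p)^{1/2}K^{1-2/p}\le C\,p^{1/2}$; the constant is absolute since $K\le\sqrt{2}$ for every system in Remark \ref{examples}.

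The case $p=\infty$ is the main obstacle, and the one place where uniform boundedness of $\{\phi_{\mathbf{m}}\}$ alone does not suffice: one needs a discretization of $\mathcal{T}_{M_{1},M_{2}}$. For the systems considered, $\mathcal{T}_{M_{1},M_{2}}\subset\mathcal{T}_{M_{2}}$ lies in a space on which $\norm{\cdot}_{\infty}$ coincides, up to a universal factor, with a maximum over $N$ points, where $N\le C_{1}n^{c}$ with constants depending only on $d$: for a Vilenkin or Walsh factor, $\mathrm{span}\{\phi_{l}^{(j)}:l\le M_{2}\}$ consists of functions constant on the atoms of a Vilenkin partition of $\Omega_{j}$, and the span of a block of consecutive one-dimensional indices involves only $\ll\log_{2}(\text{blocklength})$ digits, so the $d$-fold tensor product is constant on $\ll n^{c}$ atoms of equal measure $1/N$; for a trigonometric factor one argues on a grid of comparably many nodes via a Marcinkiewicz--Zygmund/Bernstein inequality. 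Granting this, one obtains the Nikolskii-type inequality $\norm{f}_{\infty}\le C_{2}\norm{f}_{p_{0}}$ for all $f\in\mathcal{T}_{M_{1},M_{2}}$ with $p_{0}$ of order $\log_{2}n$ chosen so that $N^{1/p_{0}}\le\mathrm{e}$, since $\norm{f}_{p_{0}}^{p_{0}}\ge N^{-1}\norm{f}_{\infty}^{p_{0}}$ (and likewise on the grid). Hence, applying the bound already proved for the finite exponent $p_{0}$, $M(\norm{\cdot}_{(\infty)})\le C_{2}\,M(\norm{\cdot}_{(p_{0})})\le C_{2}C\,p_{0}^{1/2}\ll(\log_{2}n)^{1/2}$. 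Equivalently, one may bypass the Nikolskii step and estimate $\mathbb{E}_{g}\max_{i\le N}\abs{\sum_{k}g_{k}\xi_{k}(a_{i})}^{2}\le(1+2\log 2N)\max_{i}\sum_{k}\xi_{k}(a_{i})^{2}\le(1+2\log 2N)\,nK^{2}$ by a Gaussian maximal inequality and Gaussian concentration, then use $\mathbb{E}_{g}\norm{J(g)}_{\infty}^{2}=n\,M(\norm{\cdot}_{(\infty)})^{2}$; this is the argument used for the Walsh system in \cite{Sergio}, of which the present statement is the announced generalization.
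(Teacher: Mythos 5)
The paper never actually proves this theorem: it is introduced only as ``a simple generalization of a particular case of Theorem 5, p.~1301 in \cite{Sergio}'', so any self-contained argument you give is by definition a different route. Your lower bound and your treatment of $2\le p<\infty$ are correct and complete: the Fubini/rotational-invariance computation, the moment bound $\int_{\mathbb{S}^{n-1}}|u_1|^p\,d\sigma\le (C_0p/n)^{p/2}$, and the two facts $\Theta(x)\le nK^2$, $\int_\Omega\Theta\,d\mu=n$ give $M(\norm{\cdot}_{(p)})\le C_0^{1/2}K^{1-2/p}\,p^{1/2}$ for \emph{every} uniformly bounded orthonormal system (the constant of the theorem therefore depends on $K$, which is all one can expect; it is genuinely absolute only after fixing the class of systems). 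Your second route for $p=\infty$ --- discretize, apply the Gaussian maximal inequality with concentration, and use $\mathbb{E}_g\norm{J(g)}_{\infty}^2=n\,M(\norm{\cdot}_{(\infty)})^2$ --- is precisely the mechanism of the Walsh-case proof in \cite{Sergio} that the paper is implicitly invoking.

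The point you flag but perhaps understate is that for $p=\infty$ the theorem \emph{as literally stated} (arbitrary complete orthonormal system bounded by $K$, arbitrary enumeration) is false, not merely harder: take $d=1$, $\Omega=\{-1,1\}^{\N}$ with Haar measure and the full Walsh system enumerated so that $\phi_1,\dots,\phi_n$ are the coordinate (Rademacher) functions; then $\norm{\sum_k u_k\phi_k}_{\infty}=\sum_k|u_k|$ and $M(\norm{\cdot}_{(\infty)})\asymp n^{1/2}$, not $(\log_2 n)^{1/2}$. Hence the discretization property you assume --- that $(\xi_1(x),\dots,\xi_n(x))$ takes only $N\le C_1n^{c}$ essentially distinct values with $c=c(d)$, or an equivalent Nikolskii inequality with exponent $p_0\asymp\log_2 n$ --- is a genuine extra hypothesis on the system \emph{and} on how the indices $\mathbf{k}$ with $M_1<|\mathbf{k}|\le M_2$ sit inside it, not a technical convenience. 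It does hold for the multiple Walsh and trigonometric systems of Remark \ref{examples}, but for Vilenkin systems with unbounded $(s_j)$ the cylinder-atom count need not be polynomial in $n$, so even there your sketch needs more care. In short: your proof is sound for the systems the paper actually uses, the finite-$p$ part is fully general, and the residual gap in the $p=\infty$ case is one the theorem's own hypotheses cannot close.
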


\section{Estimates for entropy numbers of general multiplier operators}

Let $K$ be a compact set in $\R^{n}$ and let us denote by $\langle x,y \rangle$ the usual inner product of $x,y \in \R^{n}$. The polar set of $K$ is the set 
\[
K^{\circ}=\left\lbrace x\in\R^{n}: \sup_{y\in K}\abs{\langle x,y \rangle}\leq 1\right \rbrace,
\]
and we define 
\[
\norm{x}_{K^{\circ}}=\sup\left\lbrace\abs{\langle x,y\rangle}: y\in K\right\rbrace, \hspace{2ex} x\in\R^{n}.
\]

\begin{theorem}(Urysohn's inequality, \cite{Pisier}, p. 6)\label{desigualdadeurysohn}
Let $K$ be a compact set in $\R^{n}$. Then
\[
\left( \frac{\textup{Vol}_{n}(K)}{\textup{Vol}_{n}\left(B^{n}_{(2)}\right)}\right)^{1/n}\leq \int_{\mathbb{S}^{n-1}}\norm{x}_{K^{\circ}}d\sigma (x),
\]
where $\textup{Vol}_{n}(A)$ denote the volume of a measurable subset $A$ in $\R^{n}$.
\end{theorem}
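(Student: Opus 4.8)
The plan is to deduce this from the Brunn--Minkowski inequality by a rotational-averaging argument, after first reducing to the case of a symmetric convex body.

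First, given a compact set $K\subset\R^{n}$, put $L=\textup{conv}\bigl(K\cup(-K)\bigr)$. Then $L$ is a compact convex set, symmetric with respect to the origin, and $K\subset L$, so $\textup{Vol}_{n}(K)\leq\textup{Vol}_{n}(L)$; moreover its support function $h_{L}(x)=\sup_{y\in L}\dual{x}{y}$ satisfies $h_{L}(x)=\sup_{y\in K}\abs{\dual{x}{y}}=\norm{x}_{K^{\circ}}$ for every $x\in\R^{n}$, so the right-hand side of the asserted inequality is unchanged when $K$ is replaced by $L$. If $L$ has empty interior then $\textup{Vol}_{n}(K)=0$ and there is nothing to prove; otherwise $L$ is a convex body with the origin in its interior, and it suffices to establish $\bigl(\textup{Vol}_{n}(L)/\textup{Vol}_{n}(B^{n}_{(2)})\bigr)^{1/n}\leq\int_{\mathbb{S}^{n-1}}h_{L}(x)\,d\sigma(x)$.

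Next, let $dg$ be the normalized Haar measure on the rotation group $SO(n)$ and consider the Minkowski average $\widetilde{L}=\int_{SO(n)}gL\,dg$, that is, the (unique) compact convex set whose support function is the sublinear function $h_{\widetilde{L}}(v)=\int_{SO(n)}h_{gL}(v)\,dg=\int_{SO(n)}h_{L}(g^{-1}v)\,dg$. By the rotation invariance of the measure $\sigma$ on $\mathbb{S}^{n-1}$, this last integral equals $R:=\int_{\mathbb{S}^{n-1}}h_{L}(u)\,d\sigma(u)$, which does not depend on $v$; hence $\widetilde{L}=R\,B^{n}_{(2)}$ and $\textup{Vol}_{n}(\widetilde{L})=R^{n}\,\textup{Vol}_{n}(B^{n}_{(2)})$. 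Since the Brunn--Minkowski inequality asserts that $\textup{Vol}_{n}(\cdot)^{1/n}$ is concave under Minkowski combinations, iterating over finite averages and passing to the limit gives $\textup{Vol}_{n}(\widetilde{L})^{1/n}\geq\int_{SO(n)}\textup{Vol}_{n}(gL)^{1/n}\,dg=\textup{Vol}_{n}(L)^{1/n}$, using that volume is rotation invariant. Combining $\textup{Vol}_{n}(L)\leq\textup{Vol}_{n}(\widetilde{L})=R^{n}\,\textup{Vol}_{n}(B^{n}_{(2)})$ with $R=\int_{\mathbb{S}^{n-1}}\norm{x}_{K^{\circ}}\,d\sigma(x)$ and with $\textup{Vol}_{n}(K)\leq\textup{Vol}_{n}(L)$ yields the inequality.

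The only delicate point, and the step I expect to require the most care, is making rigorous the ``integral of convex bodies'' $\widetilde{L}=\int_{SO(n)}gL\,dg$ and the corresponding continuous form of the Brunn--Minkowski inequality. I would handle this by approximating the Haar average by finite Minkowski averages $N^{-1}\sum_{i=1}^{N}g_{i}L$ over an equidistributed family of rotations $g_{1},\dots,g_{N}\in SO(n)$, applying the elementary finite Brunn--Minkowski inequality there, and letting $N\to\infty$; continuity of volume and of support functions with respect to the Hausdorff metric, together with equidistribution on $SO(n)$, identifies the limit with $R\,B^{n}_{(2)}$ and transfers the inequality. The remaining verifications---compactness and convexity of $L$, the identity $h_{L}=\norm{\cdot}_{K^{\circ}}$, and the scaling $\textup{Vol}_{n}(R\,B^{n}_{(2)})=R^{n}\textup{Vol}_{n}(B^{n}_{(2)})$---are routine.
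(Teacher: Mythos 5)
The paper does not prove this statement; it is quoted verbatim from Pisier's book with a citation, so there is no internal proof to compare against. Your argument is correct and is in fact the classical proof of Urysohn's inequality (essentially the one in Pisier): the reduction to $L=\textup{conv}(K\cup(-K))$ with $h_{L}=\norm{\cdot}_{K^{\circ}}$, the identification of the rotational Minkowski average with $R\,B^{n}_{(2)}$ via invariance of $\sigma$, and the Brunn--Minkowski concavity of $\textup{Vol}_{n}^{1/n}$ applied to finite equidistributed averages and passed to the Hausdorff limit are all sound, and the degenerate case $\textup{Vol}_{n}(L)=0$ is handled.
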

\begin{proposition}\label{proposicionentropias1} (\cite{Pisier}, p. \textup{xi})
Let $V$ be a convex, centrally symmetric, bounded and absorbent subset of $\R^{n}$. Then there is a constant $C>0$, not dependent on $n$, such that for all $n\in \N$,
\[
\left( \frac{\textup{Vol}_{n}(V)\textup{Vol}_{n}\bigl(V^{\circ}\bigl)}{\left(\textup{Vol}_{n}\left(B^{n}_{(2)}\right)\right)^{2}}\right)^{1/n}\geq C.
\]
\end{proposition}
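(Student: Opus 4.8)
The estimate is the Bourgain--Milman reverse Santaló inequality, and the plan is to reduce it to one deep structural fact from the local theory of normed spaces, after which only routine volume bookkeeping remains. First I would record two elementary invariances: the Mahler-type product $\operatorname{Vol}_{n}(V)\operatorname{Vol}_{n}(V^{\circ})$ is unchanged under $GL(n)$ and under positive scaling. Indeed, for $T\in GL(n)$ one has $(TV)^{\circ}=T^{-\ast}V^{\circ}$, so $\operatorname{Vol}_{n}(TV)\operatorname{Vol}_{n}\bigl((TV)^{\circ}\bigr)=\abs{\det T}\operatorname{Vol}_{n}(V)\cdot\abs{\det T}^{-1}\operatorname{Vol}_{n}(V^{\circ})=\operatorname{Vol}_{n}(V)\operatorname{Vol}_{n}(V^{\circ})$, and replacing $V$ by $\lambda V$ multiplies the two volumes by $\lambda^{n}$ and $\lambda^{-n}$. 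Hence it suffices to prove the bound for one conveniently chosen linear image of $V$.

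Next I would invoke Milman's theorem on the existence of an $M$-ellipsoid (equivalently, the reverse Brunn--Minkowski inequality): there is an absolute constant $c_{0}>0$ such that every centrally symmetric convex body $V\subset\R^{n}$ admits an ellipsoid $\mathcal{E}$ with $\operatorname{Vol}_{n}(\mathcal{E})=\operatorname{Vol}_{n}(V)$ and
\[
\max\bigl\{N(V,\mathcal{E}),\,N(\mathcal{E},V),\,N(V^{\circ},\mathcal{E}^{\circ}),\,N(\mathcal{E}^{\circ},V^{\circ})\bigr\}\leq e^{c_{0}n},
\]
where $N(A,B)$ is the least number of translates of $B$ needed to cover $A$. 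Applying a linear map we may assume $\mathcal{E}=B^{n}_{(2)}$, and then $\operatorname{Vol}_{n}(V)=\operatorname{Vol}_{n}\bigl(B^{n}_{(2)}\bigr)$ while $\bigl(B^{n}_{(2)}\bigr)^{\circ}=B^{n}_{(2)}$ gives $N\bigl(B^{n}_{(2)},V^{\circ}\bigr)\leq e^{c_{0}n}$ (recall $B^{n}_{(2)}$ is the Euclidean ball, since the $\xi_{k}$ form an orthonormal system). The rest is bookkeeping: from $N\bigl(B^{n}_{(2)},V^{\circ}\bigr)\leq e^{c_{0}n}$ we get $\operatorname{Vol}_{n}\bigl(B^{n}_{(2)}\bigr)\leq e^{c_{0}n}\operatorname{Vol}_{n}(V^{\circ})$, i.e. $\operatorname{Vol}_{n}(V^{\circ})\geq e^{-c_{0}n}\operatorname{Vol}_{n}\bigl(B^{n}_{(2)}\bigr)$; multiplying by $\operatorname{Vol}_{n}(V)=\operatorname{Vol}_{n}\bigl(B^{n}_{(2)}\bigr)$ and taking $n$-th roots yields the claim with $C=e^{-c_{0}}$.

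A variant that stays closer to the machinery already cited in this paper replaces the covering bound above by the dual Sudakov inequality of Pajor and Tomczak-Jaegermann \cite{Pajor}, $\log N\bigl(B^{n}_{(2)},tV\bigr)\leq cn\,M\bigl(\norm{\cdot}_{V^{\circ}}\bigr)^{2}t^{-2}$ together with its analogue for $V^{\circ}$, combined with the estimate $M\bigl(\norm{\cdot}_{V^{\circ}}\bigr)\,M\bigl(\norm{\cdot}_{V}\bigr)\leq c$ valid after a suitable linear change of variables; after rescaling so that $M\bigl(\norm{\cdot}_{V^{\circ}}\bigr)=M\bigl(\norm{\cdot}_{V}\bigr)$ this again produces $\operatorname{Vol}_{n}(V)\geq e^{-c'n}\operatorname{Vol}_{n}\bigl(B^{n}_{(2)}\bigr)$ and likewise for $V^{\circ}$. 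In either approach the genuine difficulty is the existence of the $M$-ellipsoid (the $M$-position): it is established in \cite{Pisier} using the theory of type and cotype, the logarithmic bound on the $K$-convexity constant, Milman's quotient-of-subspace theorem, and the isomorphic Santaló inequality, which is exactly why the inequality is quoted from there rather than proved from scratch. By contrast, Urysohn's inequality (Theorem \ref{desigualdadeurysohn}) controls volumes by mean widths in the \emph{opposite} direction and is not the tool needed here; the lower bound on the Mahler volume genuinely requires the reverse Brunn--Minkowski input.
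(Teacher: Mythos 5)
The paper offers no proof of this proposition: it is the Bourgain--Milman (reverse Santal\'o) inequality, quoted verbatim from the introduction of Pisier's book, so there is no in-paper argument to compare against. Your main route is the standard derivation and is correct as far as it goes: the $GL(n)$- and scaling-invariance of the product $\textup{Vol}_{n}(V)\textup{Vol}_{n}(V^{\circ})$ is right, and once Milman's $M$-ellipsoid theorem is granted, normalizing $\mathcal{E}=B^{n}_{(2)}$ and using $N\bigl(B^{n}_{(2)},V^{\circ}\bigr)\leq e^{c_{0}n}$ to get $\textup{Vol}_{n}(V^{\circ})\geq e^{-c_{0}n}\textup{Vol}_{n}\bigl(B^{n}_{(2)}\bigr)$ does finish the proof with $C=e^{-c_{0}}$. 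You also correctly identify that $B^{n}_{(2)}$ is the Euclidean ball here (since the $\xi_{k}$ are orthonormal in $L^{2}$), that the entire difficulty is concentrated in the existence of the $M$-position, and that Urysohn's inequality points in the wrong direction for a lower bound on the volume product.

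The one genuine flaw is in your proposed ``variant'' via dual Sudakov: the inequality $M\bigl(\norm{\cdot}_{V^{\circ}}\bigr)\,M\bigl(\norm{\cdot}_{V}\bigr)\leq c$ with a dimension-free constant is not available in general. What the $K$-convexity/$\ell$-position machinery gives is $MM^{*}\leq C\log n$ (more precisely $C\log(1+d_{V})$), and already for the cube the product is of order $\sqrt{\log n}$ in its natural position. Feeding that into the two dual Sudakov covering bounds and multiplying yields only
\[
\left(\frac{\textup{Vol}_{n}(V)\textup{Vol}_{n}(V^{\circ})}{\bigl(\textup{Vol}_{n}\bigl(B^{n}_{(2)}\bigr)\bigr)^{2}}\right)^{1/n}\geq \frac{c'}{MM^{*}}\geq \frac{c''}{\log n},
\]
which is strictly weaker than the stated dimension-free bound. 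So the variant does not recover the proposition; only the $M$-ellipsoid (reverse Brunn--Minkowski) route does, which is exactly why the result must be imported from \cite{Pisier} rather than rederived from Lemma \ref{lemaentropia}.
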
 
\begin{theorem}\label{teoremainferioressentropia}
Let $\Lambda=\lbrace\lambda_{\textup{\textbf{k}}}\rbrace_{\textup{\textbf{k}}\in \N^{d}_{0}}$ be an arbitrary multiplier operator, where $\lambda_{\textup{\textbf{k}}}=\lambda(\abs{\textup{\textbf{k}}})$, for some function $\lambda:[0,\infty)\to \R$, such that $t\mapsto\abs{\lambda(t)}$  is  non-increasing. Then there is a constant $C>0$, depending only on $p$ and $q$, such that, for all $N,k\in \N$ and $n=\dim\mathcal{T}_{N} =\sum_{l=1}^{N}d_{l}$, $d_{l}=\dim \mathcal{H}_{l}$, we have
\[
e_{k}\left(\Lambda U_{p},L^{q}\right)\geq C2^{-k/n}\left (\prod^{N}_{l=1}\abs{\lambda(l)}^{d_{l}} \right)^{1/n} \mathcal{V}_{n},
\] 
where 
  \begin{equation*}
   \mathcal{V}_{n}= \left\{
\begin{array}{ll}
1,&\textbf{$ p <\infty$, $ q>1$},\\
(\log_{2}n)^{-1/2},&\textbf{$ p< \infty$, $q=1$},\\
(\log_{2}n)^{-1/2},&  p=\infty,q>1,\\
(\log_{2} n)^{-1},& \textbf{$ p=\infty$, $q=1$}.
\end{array}
\right.
   \end{equation*}
   In particular, if $k=n$, then
   \begin{equation}\label{ecuacionentropianueva1}
  e_{n}\left(\Lambda U_{p},L^{q}\right)\geq C \abs{\lambda(N)}\mathcal{V}_{n}.   
   \end{equation}
\end{theorem}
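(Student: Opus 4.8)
The plan is to localize the estimate to the finite‑dimensional space $\mathcal T_N$, reduce it there to a comparison of volumes, and then read off the correct powers of $\log n$ from the Levy mean bounds of Theorem~\ref{teorema-media-1} combined with Urysohn's inequality (Theorem~\ref{desigualdadeurysohn}) and the reverse Santal\'{o}-type estimate of Proposition~\ref{proposicionentropias1}. We may assume $\lambda(l)\neq 0$ for $1\le l\le N$, for otherwise $\prod_{l=1}^{N}|\lambda(l)|^{d_l}=0$ and there is nothing to prove. First I would work in $\mathcal T_{N}=\bigoplus_{l=1}^{N}\mathcal H_l$ with the orthonormal basis $\Upsilon=\{\xi_k\}_{k=1}^{n}$ and the coordinate isomorphism $J$ of the Remark, noting that $\Lambda$ restricted to $\mathcal T_N$ is the operator $\Lambda_n$ of (\ref{operador1}) and satisfies $\Lambda_n J=J\widetilde\Lambda_n$ with $\widetilde\Lambda_n$ the diagonal operator of (\ref{operador2}). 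Since $B_p^{n}=U_p\cap\mathcal T_N\subseteq U_p$ and $\Lambda B_p^{n}\subseteq\mathcal T_N$, monotonicity of entropy numbers in the set, together with (\ref{ecuaentroart1}) applied to $\Lambda_n\colon(\mathcal T_N,\norm{\cdot}_p)\to(\mathcal T_N,\norm{\cdot}_q)$ and the isometric embedding $(\mathcal T_N,\norm{\cdot}_q)\hookrightarrow L^{q}$, gives $e_k(\Lambda U_p,L^{q})\ge e_k(\Lambda B_p^{n},L^{q})\ge \tfrac12\, e_k\bigl(\Lambda B_p^{n},(\mathcal T_N,\norm{\cdot}_q)\bigr)$. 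Transporting to $\R^{n}$ by the isometry $J\colon(\R^{n},\norm{\cdot}_{(q)})\to(\mathcal T_N,\norm{\cdot}_q)$ and using $JB_{(p)}^{n}=B_p^{n}$, the last term equals $\tfrac12\, e_k\bigl(\widetilde\Lambda_n B_{(p)}^{n},\norm{\cdot}_{(q)}\bigr)$.

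Next I would use the standard volumetric lower bound for entropy numbers. If $\widetilde\Lambda_n B_{(p)}^{n}$ is covered by $2^{k-1}$ translates of $\epsilon B_{(q)}^{n}$, then, since $\widetilde\Lambda_n$ is diagonal with entries $\lambda_j$, comparing Lebesgue volumes forces
\[
2^{k-1}\epsilon^{n}\mathrm{Vol}_n\bigl(B_{(q)}^{n}\bigr)\ \ge\ \mathrm{Vol}_n\bigl(\widetilde\Lambda_n B_{(p)}^{n}\bigr)=\Bigl(\prod_{j=1}^{n}|\lambda_j|\Bigr)\mathrm{Vol}_n\bigl(B_{(p)}^{n}\bigr),
\]
so that $e_k\bigl(\widetilde\Lambda_n B_{(p)}^{n},\norm{\cdot}_{(q)}\bigr)\ge 2^{-k/n}\bigl(\prod_{j=1}^{n}|\lambda_j|\bigr)^{1/n}\bigl(\mathrm{Vol}_n(B_{(p)}^{n})/\mathrm{Vol}_n(B_{(q)}^{n})\bigr)^{1/n}$. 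Because $t\mapsto|\lambda(t)|$ is non‑increasing and every generator index of $\Upsilon$ satisfies $\bigl|\mathbf{m}_t^{l+1}\bigr|\le l+1$, we have $|\lambda_j|=\bigl|\lambda(|\mathbf{m}_t^{l+1}|)\bigr|\ge|\lambda(l+1)|$, whence, regrouping the product over the pairs $(l,t)$, $\prod_{j=1}^{n}|\lambda_j|\ge\prod_{l=1}^{N}|\lambda(l)|^{d_l}$.

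It remains to show $\bigl(\mathrm{Vol}_n(B_{(p)}^{n})/\mathrm{Vol}_n(B_{(q)}^{n})\bigr)^{1/n}\gg\mathcal V_n$, which is where the two geometric inequalities enter. For $1\le r\le\infty$ with conjugate exponent $r'$, Hölder's inequality on the probability space $(\Omega,\mu)$ gives, since $\langle x,y\rangle=\int_\Omega(Jx)(Jy)\,d\mu$,
\[
\norm{x}_{(B_{(r)}^{n})^{\circ}}=\sup_{\norm{y}_{(r)}\le1}\Bigl|\int_\Omega(Jx)(Jy)\,d\mu\Bigr|\ \le\ \norm{Jx}_{r'}=\norm{x}_{(r')}.
\]
Applying Theorem~\ref{desigualdadeurysohn} to $K=B_{(q)}^{n}$ and to $K=(B_{(p)}^{n})^{\circ}$ (using the bipolar identity $((B_{(p)}^{n})^{\circ})^{\circ}=B_{(p)}^{n}$), together with $\int_{\mathbb S^{n-1}}\norm{x}\,d\sigma(x)\le M(\norm{\cdot})$, yields
\[
\Bigl(\frac{\mathrm{Vol}_n(B_{(q)}^{n})}{\mathrm{Vol}_n(B_{(2)}^{n})}\Bigr)^{1/n}\le M\bigl(\norm{\cdot}_{(q')}\bigr),\qquad \Bigl(\frac{\mathrm{Vol}_n((B_{(p)}^{n})^{\circ})}{\mathrm{Vol}_n(B_{(2)}^{n})}\Bigr)^{1/n}\le M\bigl(\norm{\cdot}_{(p)}\bigr).
\]
Proposition~\ref{proposicionentropias1} with $V=B_{(p)}^{n}$ then gives $\mathrm{Vol}_n(B_{(p)}^{n})^{1/n}\ge C\,\mathrm{Vol}_n(B_{(2)}^{n})^{2/n}\big/\mathrm{Vol}_n((B_{(p)}^{n})^{\circ})^{1/n}\ge C\,\mathrm{Vol}_n(B_{(2)}^{n})^{1/n}\big/M(\norm{\cdot}_{(p)})$, so
\[
\Bigl(\frac{\mathrm{Vol}_n(B_{(p)}^{n})}{\mathrm{Vol}_n(B_{(q)}^{n})}\Bigr)^{1/n}\ \ge\ \frac{C}{M(\norm{\cdot}_{(p)})\,M(\norm{\cdot}_{(q')})}.
\]
Now Theorem~\ref{teorema-media-1} gives $M(\norm{\cdot}_{(s)})\le Cs^{1/2}$ for $2\le s<\infty$ and $M(\norm{\cdot}_{(\infty)})\le C(\log_2 n)^{1/2}$, while for $1\le s<2$ one has $M(\norm{\cdot}_{(s)})\le M(\norm{\cdot}_{(2)})=1$, because $\norm{\cdot}_{(2)}$ is the Euclidean norm and $\norm{f}_s\le\norm{f}_2$ on $(\Omega,\mu)$. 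Hence $M(\norm{\cdot}_{(p)})$ is bounded by a constant depending on $p$ when $p<\infty$ and by $C(\log_2 n)^{1/2}$ when $p=\infty$, and $M(\norm{\cdot}_{(q')})$ is bounded by a constant depending on $q$ when $q>1$ and by $C(\log_2 n)^{1/2}$ when $q=1$; multiplying these out over the four possibilities reproduces exactly $C^{-1}\mathcal V_n$.

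Combining the three displayed estimates, $e_k(\Lambda U_p,L^{q})\ge C\,2^{-k/n}\bigl(\prod_{l=1}^{N}|\lambda(l)|^{d_l}\bigr)^{1/n}\mathcal V_n$, which is the assertion; and when $k=n$ we have $2^{-k/n}=\tfrac12$, while, since $|\lambda|$ is non‑increasing, $\prod_{l=1}^{N}|\lambda(l)|^{d_l}\ge|\lambda(N)|^{\sum_{l}d_l}=|\lambda(N)|^{n}$, so $\bigl(\prod_{l=1}^{N}|\lambda(l)|^{d_l}\bigr)^{1/n}\ge|\lambda(N)|$, which yields (\ref{ecuacionentropianueva1}). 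I expect the delicate point to be this third step: coupling Urysohn's inequality with the Bourgain--Milman‑type bound of Proposition~\ref{proposicionentropias1} so that the volume ratio is controlled by the product of precisely two Levy means, and keeping the four regimes $\{p<\infty,\,p=\infty\}\times\{q>1,\,q=1\}$ straight — in particular treating $1\le p<2$, where Theorem~\ref{teorema-media-1} does not apply directly and one falls back on the trivial $L^{2}$ domination.
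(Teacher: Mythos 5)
Your proof is correct and follows essentially the same route as the paper: localize to $\mathcal{T}_{N}$ via (\ref{ecuaentroart1}), apply the volumetric covering bound to $\widetilde{\Lambda}_{n}B^{n}_{(p)}$, and control the volume ratio $\bigl(\textup{Vol}_{n}(B^{n}_{(p)})/\textup{Vol}_{n}(B^{n}_{(q)})\bigr)^{1/n}$ by combining Urysohn's inequality, Proposition \ref{proposicionentropias1} and the Levy-mean bounds of Theorem \ref{teorema-media-1}. The only (harmless) difference is that you treat all four regimes uniformly by observing $M(\norm{\cdot}_{(s)})\leq M(\norm{\cdot}_{(2)})=1$ for $s<2$, whereas the paper proves the core case $2\leq p$, $1<q\leq 2$ and extends to the remaining cases by monotonicity of entropy numbers in $p$ and $q$.
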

\begin{proof}
If for some $1\leq l \leq N$, $\lambda(l)=0$, then the statement is trivial. Let us suppose $\lambda(l)\neq 0$ for all $1\leq l\leq N$. 
Consider a norm $\norm{\cdot}$ on $\R^{n}$ and denote by $E$ the Banach space $\left(\R^{n},\norm{\cdot}\right)$. The dual norm of $\norm{\cdot}$ is given by $\norm{x}^{\circ}=\sup\left \lbrace \abs{\langle x,y \rangle}: y\in B_{E}\right\rbrace$, $x\in \R^{n}$. The dual space $\left(\R^{n}, \norm{\cdot}^{\circ}\right)$ of $E$ will be denoted by $E^{\circ}$. Using the fact that $J$ is an isomorphism and the Hölder's inequality, for all $x\in\R^{n}$ we get
\begin{align*}
    \norm{x}_{(p)}^{\circ} & =\sup \left\{\abs{\dual{x}{y}}: y \in B_{(p)}^{n}\right\}=\sup \left \{\abs{\int_{\Omega}J(x)J(\bar{y})d\mu }: J(\bar{y}) \in B_{p}^{n}\right \}\\
    &\leq\sup \left\lbrace\norm{J(x)}_{p'}\norm{J(\bar{y})}_{p}: J(\bar{y})\in B_{p}^{n}\right\rbrace \leq\norm{J(x)}_{p'}=\norm{x}_{(p')},
  \end{align*}
for any $1\leq p,p'\leq\infty$, such that $1/p+1/p'=1$. Consider $1\leq q\leq 2$ and $\norm{\cdot}=\norm{\cdot}_{(q)}$, then from Theorems \ref{desigualdadeurysohn}, \ref{teorema-media-1} and the last estimate, it follows that
\begin{align}\label{ecuacionentropia1}
\left(\frac{\textup{Vol}_{n}\left(B^{n}_{(q)}\right)}{\textup{Vol}_{n}\left(B^{n}_{(2)}\right)} \right)^{1/n}&\leq \int_{\mathbb{S}^{n-1}}\norm{x}^{\circ}_{(q)}d\sigma(x)\leq \int_{\mathbb{S}^{n-1}}\norm{x}_{(q')}d\sigma(x)\leq \left(\int_{\mathbb{S}^{n-1}}\norm{x}^{2}_{(q')}d\sigma(x) \right)^{1/2}\nonumber\\
&=M\left(\norm{\cdot}_{(q')}\right)\leq  C_{1}\left\{
\begin{array}{ll}
(q')^{1/2},&\textbf{$ 2\leq q'< \infty $},\\
(\log_{2}n)^{1/2},&\textbf{$q'=\infty$},
\end{array}
\right.
\end{align}
where $1/q+1/q'=1$. Analogously, if we take $2\leq p\leq \infty$ and $\norm{\cdot}=\norm{\cdot}_{(p')}$, we get
\begin{equation}\label{ecuacionentropia2}
\left(\frac{\textup{Vol}_{n}\left(\left(B^{n}_{(p)}\right)^{\circ}\right)}{\textup{Vol}_{n}\left(B^{n}_{(2)}\right)} \right)^{1/n}\leq M\left(\norm{\cdot}_{(p)}\right)\leq  C_{2}\left\{
\begin{array}{ll}
p^{1/2},&\textbf{$ 2\leq p< \infty$},\\
(\log_{2}n)^{1/2},&\textbf{$p=\infty$}.
\end{array}
\right.
\end{equation}
From Proposition \ref{proposicionentropias1}, there is an absolute constant $C_{3}>0$ such that
\[
\left(\frac{\textup{Vol}_{n}\left(B^{n}_{(p)}\right)\textup{Vol}_{n}\left(\left(B^{n}_{(p)}\right)^{\circ}\right)}{\textup{Vol}_{n}\left(B^{n}_{(2)}\right)^{2}} \right)^{1/n}\geq C_{3},
\]
and by (\ref{ecuacionentropia2}) we have that
\begin{equation}\label{ecuacionentropia3}
\left(\frac{\textup{Vol}_{n}\left(B^{n}_{(p)}\right)}{\textup{Vol}_{n}\left(B^{n}_{(2)}\right)} \right)^{1/n}\geq C_{3} \left(\frac{\textup{Vol}_{n}\left(B^{n}_{(p)}\right)^{\circ}}{\textup{Vol}_{n}\left(B^{n}_{(2)}\right)} \right)^{-1/n}\geq C_{4}\left\{
\begin{array}{ll}
p^{-1/2},&\textbf{$ 2\leq p<\infty$},\\
(\log_{2}n)^{-1/2},&\textbf{$p=\infty$}.
\end{array}
\right.
\end{equation}
On the other hand, let $\left\{x_{1},x_{2},\ldots,x_{N(\epsilon)}\right\}$ be a minimal $\epsilon$-net for the set $\widetilde{\Lambda}_{n}B_{(p)}^{n}$ in the Banach space $\left(\R^{n},\norm{\cdot}_{(q)}\right)$. Then
\[
\widetilde{\Lambda}_{n}B_{(p)}^{n} \subset \bigcup_{k=1}^{N(\epsilon)}\left(x_{k}+ \epsilon B^{n}_{(q)}\right).
\]
Comparing volumes, we get
\[
\textup{Vol}_{n}\left(\widetilde{\Lambda}_{n}B_{(p)}^{n} \right)\leq \epsilon^{n} N(\epsilon)\textup{Vol}_{n}\left(B^{n}_{(q)}\right)
\]
and hence
\begin{equation}\label{ecuacionentropia4}
\left(\frac{\textup{Vol}_{n}\left(\widetilde{\Lambda}_{n}B^{n}_{(p)}\right)}{\textup{Vol}_{n}\left( B^{n}_{(2)}\right)} \right)^{1/n}\leq \epsilon (N(\epsilon))^{1/n} \left(\frac{\textup{Vol}_{n}\left(B^{n}_{(q)}\right)}{\textup{Vol}_{n}\left(B^{n}_{(2)}\right)} \right)^{1/n}.
\end{equation}
Since $\textup{Vol}_{n}\left(\widetilde{\Lambda}_{n} B^{n}_{(p)}\right)=\left(\det \widetilde{\Lambda}_{n}\right)\textup{Vol}_{n}\left(B^{n}_{(p)}\right)$, from (\ref{ecuacionentropia3}), (\ref{ecuacionentropia4}), and (\ref{ecuacionentropia1}), it follows that
\begin{align}\label{ecuacionentropia6}
C_{4}\left(\det \widetilde{\Lambda}_{n}\right)^{1/n}&\left\{
\begin{array}{ll}
p^{-1/2},&\textbf{$ 2\leq p<\infty$},\\
(\log_{2}n)^{-1/2},&\textbf{$p=\infty$}.
\end{array}
\right.\leq \left(\det \widetilde{\Lambda}_{n}\right)^{1/n}\left(\frac{\textup{Vol}_{n}\left(B^{n}_{(p)}\right)}{\textup{Vol}_{n}\left( B^{n}_{(2)}\right)} \right)^{1/n}\nonumber\\
&= \left(\frac{\textup{Vol}_{n}\left(\widetilde{\Lambda}_{n}B^{n}_{(p)}\right)}{\textup{Vol}_{n}\left( B^{n}_{(2)}\right)} \right)^{1/n}\leq \epsilon \left(N(\epsilon)\right)^{1/n} \left(\frac{\textup{Vol}_{n}\left(B^{n}_{(q)}\right)}{\textup{Vol}_{n}\left( B^{n}_{(2)}\right)} \right)^{1/n}\nonumber\\
&\leq C_{1}\epsilon \left(N(\epsilon)\right)^{1/n} \left\{
\begin{array}{ll}
(q')^{1/2},&\textbf{$ 2\leq q'< \infty$},\\
(\log_{2}n)^{1/2},&\textbf{$q'=\infty$}.
\end{array}
\right.
\end{align}
We put $N(\epsilon)=2^{k-1}$ and by the definition of entropy number we get 
\[
\epsilon=e_{k}\left(\Lambda_{n}B^{n}_{(p)},\left(\R^{n},\norm{\cdot}_{(q)}\right)\right)=e_{k}\bigl (\Lambda U_{p}\cap \mathcal{T}_{N}, L^{q}\cap \mathcal{T}_{N}\bigl ),
\]
 and by (\ref{ecuaentroart1}),
\begin{equation*}
e_{k}\left(\Lambda U_{p},L^{q}\right)\geq e_{k}\left(\Lambda U_{p}\cap \mathcal{T}_{N},L^{q}\right)\geq 2^{-1}e_{k}(\Lambda U_{p}\cap \mathcal{T}_{N},L^{q}\cap \mathcal{T}_{N}).
\end{equation*}
Hence for $2\leq p <\infty$ and $1<q\leq2$, by (\ref{ecuacionentropia6}) we get
\begin{equation*}
e_{k}\left(\Lambda U_{p},L^{q}\right)\geq C_{5} 2^{-k/n}\left(\det \widetilde{\Lambda}_{n}\right)^{1/n}\geq C_{5} 2^{-k/n}\left(\prod_{l=1}^{N}\lambda(l)^{d_{l}}\right)^{1/n},
\end{equation*}
where the last inequality is true since $\lambda(l)\leq \lambda (\abs{\textbf{k}})\leq \lambda(l-1)$ for $\textbf{k}\in A_{l}\setminus A_{l-1}$. Now consider $1\leq p<2$ and $1<q\leq 2$. From the previous case
\begin{equation*}
e_{k}\left(\Lambda U_{p},L^{q}\right)\geq e_{k}(\Lambda U_{2},L^{q})\geq C 2^{-k/n}\left(\prod_{l=1}^{N}\lambda(l)^{d_{l}}\right)^{1/n}.
\end{equation*}
If $1\leq p<\infty$ and $2<q\leq \infty$, since $U_{q}\subset U_{2}$, from the previous cases we get
\begin{equation*}
e_{k}\left(\Lambda U_{p},L^{q}\right)\geq e_{k}\left(\Lambda U_{p},L^{2}\right)\geq C 2^{-k/n}\left(\prod_{l=1}^{N}\lambda(l)^{d_{l}}\right)^{1/n}.
\end{equation*}
Thus we prove the general result of the theorem for $p<\infty$ and $q>1$.
In the other cases, by similar analysis, we obtain the remaining estimates.

Since the function $t\mapsto\abs{\lambda(t)}$ is non-increasing, then \begin{equation*}
\prod_{l=1}^{N}\abs{\lambda(l)}^{d_{l}}\geq \prod_{l=1}^{N}\abs{\lambda(N)}^{d_{l}}=\abs{\lambda(N)}^{\sum_{l=1}^{N}d_{l}}=\abs{\lambda(N)}^{n}, 
\end{equation*}
and therefore putting $k=n$ we get $e_{n}\left(\Lambda U_{p},L^{q}\right)\geq C\abs{\lambda(N)}\mathcal{V}_{n}$, 
concluding the proof.
\end{proof}
	
\begin{remark}\label{observacionentropia1}
Given an arbitrary multiplier sequence $\Lambda=\left\lbrace\lambda(\abs{\textup{\textbf{k}}})\right\rbrace_{\textup{\textbf{k}} \in\N^{d}_{0}}$, for $1\leq q\leq \infty$ and $k\in\N$, we define
\[
\chi_{k}^{(q)}=\chi_{k}=3\sup_{N\geq 1}\left(\frac{2^{-k+1}\textup{Vol}_{n}\left(B^{n}_{(2)}\right)}{\textup{Vol}_{n}\left(B^{n}_{(q)}\right)}\prod^{N}_{j=1}\abs{\lambda(j)}^{d_{j}} \right)^{1/n},
\]
where $n=\dim\mathcal{T}_{N}$. We observe that $\chi_{k}$ depends on $k,q$ and $\lambda$. If $\lambda$ is a bounded function then
\[
\chi_{k}^{(q)}\leq 3\sup_{N\geq 1}\left(2^{-k+1}\right)^{1/n}\left(\frac{\textup{Vol}_{n}\left(B^{n}_{(2)}\right)}{\textup{Vol}_{n}\left(B^{n}_{(q)}\right)} \right)^{1/n}\sup_{1\leq j\leq N}\abs{\lambda(j)},
\]
for all $k\in\N$, and if $2\leq q \leq \infty$, since $B^{n}_{(q)}\subset B^{n}_{(2)}$,  it follows by (\ref{ecuacionentropia3}) that
\begin{equation}\label{ecuacionentropiasup2}
1\leq \left(\frac{Vol_{n}\left(B^{n}_{(2)}\right)}{Vol_{n}\left
(B^{n}_{(q)}\right)} \right)^{1/n}\leq C \left\{
\begin{array}{ll}
q^{1/2},&\textbf{$ 2\leq q<\infty$},\\
(\log_{2}n)^{1/2},&\textbf{$q=\infty$}.
\end{array}
\right.
\end{equation}
\end{remark}
\begin{remark}\label{observacion-superiores-1}
Consider a function  $\lambda$ such that $t\mapsto \abs{\lambda(t)}$ is a non-increasing function and $\lim_{t\to \infty}\abs{\lambda(t)}=0$. For $N\in\N$, we define $N_{1}=N$ and
\[
	N_{k+1}= \min \left\{l\in\N: 2\abs{\lambda(l)}\leq \abs{\lambda(N_{k})}\right\}, \hspace{2ex} k\in\N,
	\]
	and we have $
	\abs{\lambda(N_{k+1})}\leq 2^{-k}\abs{\lambda(N)}$. Let
	\[
	 \theta_{N_{k},N_{k+1}}= \sum_{s=N_{k}+1}^{N_{k+1}}\dim \mathcal{H}_{s}.
	\]
	Fixed a positive $\epsilon$ we write
	\[
	M= \Biggl[\frac{\log_{2}(\theta_{N_{1},N_{2}})}{\epsilon} \Biggl ], \hspace{3ex} m_{k}=[2^{-\epsilon k}\theta_{N_{1},N_{2}}]+1, \hspace{2ex} k=1,2,\ldots, M,
	\]
	and $m_{0}=\theta_{N_{0},N_{1}}=\theta_{0,N}$. We have that $\sum_{k=1}^{M}m_{k}\leq  C_{\epsilon}\theta_{N_{1},N_{2}}$, 
	where $C_{\epsilon}$ is a positive constant depending only on $\epsilon$. 	
	
	We say that  $\Lambda=\{\lambda_{\textup{\textbf{k}}}\}_{\textup{\textbf{k}}\in\N^{d}_{0}}\in K_{\epsilon,p}$, for $\epsilon>0$ and $1\leq p\leq 2$, if  for all $N\in \N$,
	\[
	\sum_{k=1}^{M}2^{-k(1-\epsilon/2)}\frac{\theta_{N_{k},N_{k+1}}^{1/p}}{\theta_{N_{1},N_{2}}^{1/2}}\leq C_{\epsilon,p}\theta_{N_{1},N_{2}}^{1/p-1/2}.
	\]
	\end{remark}
	\begin{remark}\label{Remark36}
	 Estimates of $n$-widths for the multiplier operators $\Lambda^{(1)}$ and $\Lambda^{(2)}$ were studied in \cite{Sergio}. On page 1313 it was shown that $\Lambda^{(1)}\in K_{\epsilon,2}$ if $0<\epsilon<2-d/\gamma$, $\gamma>d/2$, and on page 1318 that $\Lambda^{(2)}\in K_{\epsilon,2}$ if $0<\epsilon<\left(2r-\delta(d-r)\right)/r$, $0<\delta<2r/(d-r)$. 
	 
	 For both operators, it was proved on pages 1313 and 1319, that, for $2\leq p\leq \infty$ and $2\leq q<\infty$,
	 \[
	 \sum_{k=M+1}^{\infty}2^{-k}\theta_{N_{k},N_{k+1}}^{1/p-1/q}\leq C,
	 \]
	 where the constant $C$ does not depend on $N$. Also, for the operator $\Lambda^{(1)}$, on page 1312, the following estimates were proved,
	 \[
	 \theta_{N_{k},N_{k+1}}\asymp N^{d}_{k+1}, \hspace{2ex} 2^{k/(\gamma+1)}N\leq N_{k+1}\leq C2^{k/\gamma}N, \hspace{3ex}k\geq 1,
	 \]
	 and for the operator $\Lambda^{(2)}$, on page 1318, the following estimate can be obtained,
	 \[
	 \theta_{N_{k},N_{k+1}}\leq C 2^{(\delta(d-r)/r)k}N^{d-r},\hspace{3ex}k\geq 1.
	 \]
	 The constant $C$ does not depend on $N$.
	\end{remark}
\begin{lemma}(\cite{Pajor})\label{lemaentropia}
Let $E=(\R^{n},\norm{\cdot})$ be a $n$-dimensional Banach space  and
\[
M^{*}=M^{*}(E)=\int_{\mathbb{S}^{n-1}}\norm{x}d\sigma(x).
\]
Then, for all $m\in\N$ 
\[
e_{m}\left(B^{n}_{(2)},E\right)\ll M^{*}(E) \left\{
\begin{array}{ll}
(n/m)^{1/2},&\textbf{$ m\leq n$},\\
e^{-m/n},&\textbf{$m>n$}.
\end{array}
\right.
\]
\end{lemma}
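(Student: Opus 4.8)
The plan is to recognize this lemma as the \emph{dual Sudakov inequality} of Pajor and Tomczak-Jaegermann \cite{Pajor} and to reconstruct its proof in two ranges: a Gaussian argument for $m\le n$ and an elementary volumetric refinement for $m>n$. Throughout write $\norm{\cdot}$ for the norm of $E$, let $\gamma_{n}$ be the standard Gaussian measure on $\R^{n}$ and $g$ a standard Gaussian vector. Since integration on $\R^{n}$ factors through polar coordinates, $\mathbb{E}\norm{g}=\bigl(\mathbb{E}\norm{g}_{2}\bigr)M^{*}(E)$ and $\mathbb{E}\norm{g}_{2}\asymp\sqrt{n}$, so $\mathbb{E}\norm{g}\asymp\sqrt{n}\,M^{*}(E)$; it therefore suffices to estimate the covering numbers $N\bigl(B^{n}_{(2)},tB_{E}\bigr)$ in terms of $\mathbb{E}\norm{g}$.

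For the range $m\le n$, fix $t>0$ and take a maximal $t$-distinguishable subset $x_{1},\dots,x_{N}$ of $B^{n}_{(2)}$ in $E$; as recalled in Section 3 this is a $t$-net, so $N\ge N\bigl(B^{n}_{(2)},tB_{E}\bigr)$, while the translates $x_{i}+\tfrac{t}{2}B_{E}$ are pairwise disjoint. For any scale $\sigma>0$, the Gaussian shift inequality $\gamma_{n}\bigl(\sigma^{-1}(A+x)\bigr)\ge e^{-\norm{x}_{2}^{2}/2\sigma^{2}}\gamma_{n}\bigl(\sigma^{-1}A\bigr)$ for centrally symmetric convex $A$ (a one-line consequence of Jensen's inequality and the symmetry of $A$) together with $\norm{x_{i}}_{2}\le 1$ gives $\gamma_{n}\bigl(\sigma^{-1}(x_{i}+\tfrac{t}{2}B_{E})\bigr)\ge e^{-1/2\sigma^{2}}\,\mathbb{P}\bigl(\norm{g}\le \tfrac{t}{2\sigma}\bigr)$. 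Summing over the disjoint translates yields $N\,e^{-1/2\sigma^{2}}\,\mathbb{P}(\norm{g}\le t/2\sigma)\le 1$. Choosing $\sigma$ so that $t/2\sigma=2\,\mathbb{E}\norm{g}$, Markov's inequality makes $\mathbb{P}(\norm{g}\le t/2\sigma)\ge 1/2$, whence
\[
\log N\bigl(B^{n}_{(2)},tB_{E}\bigr)\ll 1+\frac{(\mathbb{E}\norm{g})^{2}}{t^{2}}\ll 1+\frac{n\,M^{*}(E)^{2}}{t^{2}}.
\]
Taking $t\asymp M^{*}(E)(n/m)^{1/2}$ makes the right-hand side $\le m-1$ as soon as $m\le n$ (the additive constant being absorbed), so $e_{m}\bigl(B^{n}_{(2)},E\bigr)\ll M^{*}(E)(n/m)^{1/2}$ for $m\le n$; in particular $e_{n}\bigl(B^{n}_{(2)},E\bigr)\ll M^{*}(E)$.

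For $m>n$ I would iterate. From the previous step $B^{n}_{(2)}\subset\bigcup_{i=1}^{2^{n-1}}\bigl(x_{i}+CM^{*}(E)B_{E}\bigr)$, so it remains to cover one ball $CM^{*}(E)B_{E}$ in $E$ by $E$-balls of much smaller radius; the crude volumetric bound $N(B_{E},\varepsilon B_{E})\le(3/\varepsilon)^{n}$ gives $e_{j}\bigl(CM^{*}(E)B_{E},E\bigr)\ll M^{*}(E)\,2^{-j/n}$. Concatenating the two covers (equivalently, applying the submultiplicativity \eqref{multiplicative} to $\mathrm{id}_{\ell_{2}^{n}\to E}=\mathrm{id}_{E\to E}\circ\mathrm{id}_{\ell_{2}^{n}\to E}$) yields $e_{n+j-1}\bigl(B^{n}_{(2)},E\bigr)\ll M^{*}(E)\,2^{-j/n}$, that is $e_{m}\bigl(B^{n}_{(2)},E\bigr)\ll M^{*}(E)\,e^{-cm/n}$ for $m>n$ with a universal $c>0$ (the base of the exponential being dictated by the $2^{n-1}$ centres allowed in the definition of $e_{m}$). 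Together with the first range this is the asserted estimate.

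The delicate part is the Gaussian step: the genuinely clever ingredient is the combination of the shift inequality with the freedom to rescale the Gaussian by $\sigma$, which is exactly what converts a separation estimate measured in the norm of $E$ into one involving only $\mathbb{E}\norm{g}\asymp\sqrt{n}\,M^{*}(E)$. By contrast, the volume bound used for $m>n$ is routine, but it is what forces the sharper factor $e^{-cm/n}$ there rather than merely $M^{*}(E)(n/m)^{1/2}$. One must also be mildly careful about the standard passage between $t$-distinguishable sets, packing numbers, and covering numbers, but that is exactly the material recalled in Section 3.
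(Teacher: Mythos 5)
The paper does not actually prove this lemma --- it is quoted from Pajor--Tomczak-Jaegermann \cite{Pajor} --- so there is no internal argument to compare against; what you have written is the standard proof of the dual Sudakov inequality, and it is essentially correct. The polar factorization $\mathbb{E}\norm{g}=(\mathbb{E}\norm{g}_{2})\,M^{*}(E)\asymp\sqrt{n}\,M^{*}(E)$, the Gaussian shift inequality applied to the disjoint translates $x_{i}+\tfrac{t}{2}B_{E}$ of a maximal $t$-distinguishable set, the rescaling so that $t/2\sigma=2\,\mathbb{E}\norm{g}$, and the resulting bound $\log N\bigl(B^{n}_{(2)},tB_{E}\bigr)\ll 1+nM^{*}(E)^{2}/t^{2}$ are all in order, as is the volumetric iteration via \eqref{multiplicative} for $m>n$. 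Two points deserve mention. First, for $m$ below a fixed threshold the covering bound $N\leq 2e^{8(\mathbb{E}\norm{g})^{2}/t^{2}}$ can never be $\leq 2^{m-1}$, so ``absorbing the additive constant'' really requires the separate observation that $e_{1}\bigl(B^{n}_{(2)},E\bigr)\leq\sup_{\norm{x}_{2}\leq 1}\norm{x}\ll\sqrt{n}\,M^{*}(E)$; this follows by taking a norming functional $y_{0}\in B_{E}^{\circ}$ for a point realizing that radius and noting $M^{*}(E)\geq\int_{\mathbb{S}^{n-1}}\abs{\dual{x}{y_{0}}}\,d\sigma(x)\gg\norm{y_{0}}_{2}/\sqrt{n}$. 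It is routine, but it is the step your phrase conceals. Second, and more substantively, your argument yields $e_{m}\ll M^{*}(E)\,2^{-m/n}=M^{*}(E)\,e^{-(\ln 2)m/n}$ for $m>n$, not $M^{*}(E)\,e^{-m/n}$ as literally stated. This is not a defect of your proof: the constant $1$ in the exponent is unattainable, since for $E=\ell_{2}^{n}$ one has $M^{*}(E)=1$ while a volume comparison forces $e_{m}\bigl(B^{n}_{(2)},\ell_{2}^{n}\bigr)\geq 2^{-(m-1)/n}$, which exceeds $C e^{-m/n}$ once $m/n$ is large. The correct form of the second branch is $2^{-m/n}$, i.e.\ $e^{-cm/n}$ with $c=\ln 2$, exactly what you obtain; since only the first branch $(n/m)^{1/2}$ is invoked in the proof of Theorem \ref{teoremasuperioresentropia}, nothing downstream is affected.
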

\begin{theorem}\label{teoremasuperioresentropia}
Let $\lambda:[0,\infty)\to\R$ such that $t\mapsto \abs{\lambda(t)}$ is a non-increasing function, $\lim_{t\to \infty}\abs{\lambda(t)}=0$ and consider $\Lambda=\lbrace\lambda_{\textup{\textbf{k}}}\rbrace_{\textup{\textbf{k}}\in\N^{d}_{0}}$, $\lambda_{\textup{\textbf{k}}}=\lambda(\abs{\textup{\textbf{k}}})$. Suppose $\Lambda\in K_{\epsilon,2}$, for some $\epsilon>0$. Let $\chi_{k}$ be as in Remark \ref{observacionentropia1} and $M, N_{l},  \theta_{N_{l},N_{l+1}}$ and  $m_{l}$ as in Remark \ref{observacion-superiores-1} and $\eta=k+\sum^{M}_{l=1}m_{l}$, where $k\in\N$. Consider fixed $b\in\N_{0}$ and
let $k,N \in \N$ satisfying $\abs{\lambda(N)}\geq 2^{-b} \chi_{k}$. Then there exists an absolute constant $C>0$ such that, for $2\leq p\leq \infty$, $1\leq q\leq 2$, we have that
\[
e_{\eta+bn}\left(\Lambda U_{p}, L^{q}\right)\leq C \abs{\lambda(N)},
\]
and for $2\leq p,q\leq \infty$ 
		\begin{align}\label{ecuacionentropiasup3}
		e_{\eta+bn}(\Lambda U_{p},L^{q})&\leq  C\abs{\lambda(N)}\left(\left\{
		\begin{array}{ll}
			q^{1/2},&\textbf{$2\leq q<\infty$},\\
			\sup_{1\leq j\leq M}(\log_{2}\theta_{N_{j},N_{j+1}})^{1/2},&\textbf{$q=\infty$},
		\end{array}
		\right\} \right. \nonumber\\
		&+  \left.\sum_{j=M+1}^{\infty}2^{-j}\theta^{1/2-1/q}_{N_{j},N_{j+1}} \right).
	\end{align}
\end{theorem}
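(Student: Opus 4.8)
The plan is to split $\Lambda$ according to the dyadic blocks of frequencies fixed in Remark~\ref{observacion-superiores-1}, estimate each block separately, and recombine via the subadditivity \eqref{ecuaentroart2}. Let $P_{l}$ denote the orthogonal projection of $L^{2}(\Omega)$ onto $\mathcal{T}_{N_{l},N_{l+1}}$, write $T_{0}=\Lambda\, S_{N}$ (the part of $\Lambda$ carried by $\mathcal{T}_{N}$, $N=N_{1}$), $T_{l}=\Lambda P_{l}$ for $1\le l\le M$, and $T_{\infty}=\sum_{l>M}\Lambda P_{l}$, so that $\Lambda=T_{0}+\sum_{l=1}^{M}T_{l}+T_{\infty}$. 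Iterating \eqref{ecuaentroart2} over these $M+2$ operators, with entropy budgets $k+bn$ for $T_{0}$ (here $n=\dim\mathcal{T}_{N}$), $m_{l}$ for $T_{l}$, and $1$ for $T_{\infty}$, gives
\[
e_{k+bn+\sum_{l=1}^{M}m_{l}-M}\left(\Lambda U_{p},L^{q}\right)\le e_{k+bn}\left(T_{0}U_{p},L^{q}\right)+\sum_{l=1}^{M}e_{m_{l}}\left(T_{l}U_{p},L^{q}\right)+e_{1}\left(T_{\infty}U_{p},L^{q}\right);
\]
since $k+bn+\sum_{l}m_{l}-M\le\eta+bn$ and $m\mapsto e_{m}$ is non-increasing, it suffices to bound the three terms on the right.

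\emph{Low frequencies.} As $p\ge2$, for $f\in U_{p}$ we have $\norm{S_{N}f}_{2}\le\norm{f}_{2}\le\norm{f}_{p}\le1$, so $T_{0}U_{p}\subset\Lambda_{n}B^{n}_{2}$; passing through \eqref{ecuaentroart1} and the coordinate isometry $J$ reduces matters to estimating $e_{k+bn}(\widetilde{\Lambda}_{n}B^{n}_{(2)},(\R^{n},\norm{\cdot}_{(q)}))$, where $\widetilde{\Lambda}_{n}B^{n}_{(2)}$ is an ellipsoid of volume $(\det\widetilde{\Lambda}_{n})\,\textup{Vol}_{n}(B^{n}_{(2)})$. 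A volumetric covering estimate — a maximal $\rho$-distinguishable subset of the ellipsoid is a $\rho$-net, and its cardinality is controlled by comparing volumes (Brunn--Minkowski) — gives, for a suitable choice of $\rho$,
\[
e_{k+bn}\left(\widetilde{\Lambda}_{n}B^{n}_{(2)},(\R^{n},\norm{\cdot}_{(q)})\right)\ll 2^{-b}\,2^{-k/n}\left(\frac{\textup{Vol}_{n}(B^{n}_{(2)})}{\textup{Vol}_{n}(B^{n}_{(q)})}\right)^{1/n}\left(\det\widetilde{\Lambda}_{n}\right)^{1/n}.
\]
Since $t\mapsto\abs{\lambda(t)}$ is non-increasing and, by Proposition~\ref{proposicionaplicaciones1}, $d_{l}\asymp l^{d-1}$, one checks $(\det\widetilde{\Lambda}_{n})^{1/n}\ll(\prod_{l=1}^{N}\abs{\lambda(l)}^{d_{l}})^{1/n}$; recognising the definition of $\chi_{k}$ in Remark~\ref{observacionentropia1}, the right-hand side above is $\ll 2^{-b}\chi_{k}$, and the hypothesis $\abs{\lambda(N)}\ge2^{-b}\chi_{k}$ yields $e_{k+bn}(T_{0}U_{p},L^{q})\ll\abs{\lambda(N)}$. (The extra $bn$ in the index is precisely what the covering argument costs to bring the radius from the natural scale $\asymp\chi_{k}$ down to $2^{-b}\chi_{k}$.)

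\emph{Middle blocks.} For $1\le l\le M$ and $f\in U_{p}$, $\norm{\Lambda P_{l}f}_{2}\le\abs{\lambda(N_{l})}\norm{P_{l}f}_{2}\le\abs{\lambda(N_{l})}$, so $T_{l}U_{p}\subset\abs{\lambda(N_{l})}B^{\theta_{l}}_{(2)}$ with $\theta_{l}=\theta_{N_{l},N_{l+1}}=\dim\mathcal{T}_{N_{l},N_{l+1}}$. By \eqref{ecuaentroart1} and Lemma~\ref{lemaentropia} applied to $E=(\R^{\theta_{l}},\norm{\cdot}_{(q)})$, using $M^{*}(E)\le M(\norm{\cdot}_{(q)})$, Theorem~\ref{teorema-media-1} for $q\ge2$, and $M(\norm{\cdot}_{(q)})\le1$ for $q\le2$,
\[
e_{m_{l}}\left(T_{l}U_{p},L^{q}\right)\ll\abs{\lambda(N_{l})}\left(\frac{\theta_{l}}{m_{l}}\right)^{1/2}\left\{
\begin{array}{ll}
q^{1/2},&2\le q<\infty,\\
(\log_{2}\theta_{l})^{1/2},&q=\infty,\\
1,&1\le q\le2,
\end{array}
\right.
\]
the case $m_{l}>\theta_{l}$ being even more favourable. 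Using $\abs{\lambda(N_{l})}\le2^{-(l-1)}\abs{\lambda(N)}$ and $m_{l}>2^{-\epsilon l}\theta_{N_{1},N_{2}}$, summing over $1\le l\le M$ and invoking the defining inequality of $K_{\epsilon,2}$ (with $p=2$) to control $\sum_{l}2^{-l(1-\epsilon/2)}\theta_{l}^{1/2}\theta_{N_{1},N_{2}}^{-1/2}$, one obtains $\sum_{l=1}^{M}e_{m_{l}}(T_{l}U_{p},L^{q})\ll\abs{\lambda(N)}$ for $q\le2$ and $\ll\abs{\lambda(N)}\,\{q^{1/2}$ for $q<\infty$, $\sup_{1\le l\le M}(\log_{2}\theta_{l})^{1/2}$ for $q=\infty\}$.

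\emph{Tail.} Here $e_{1}(T_{\infty}U_{p},L^{q})\le\sum_{l>M}e_{1}(T_{l}U_{p},L^{q})=\sum_{l>M}\sup_{f\in U_{p}}\norm{\Lambda P_{l}f}_{q}$. For $q\ge2$ a Nikolskii-type inequality on $\mathcal{T}_{N_{l},N_{l+1}}$ — the uniform bound $\norm{\phi_{\textup{\textbf{k}}}}_{\infty}\le K$ gives $\norm{g}_{\infty}\ll\theta_{l}^{1/2}\norm{g}_{2}$, hence $\norm{g}_{q}\ll\theta_{l}^{1/2-1/q}\norm{g}_{2}$ — together with $\norm{\Lambda P_{l}f}_{2}\le\abs{\lambda(N_{l})}\le2^{-(l-1)}\abs{\lambda(N)}$ yields $e_{1}(T_{\infty}U_{p},L^{q})\ll\abs{\lambda(N)}\sum_{l>M}2^{-l}\theta_{N_{l},N_{l+1}}^{1/2-1/q}$; for $q\le2$ one uses $\norm{g}_{q}\le\norm{g}_{2}$, giving $e_{1}(T_{\infty}U_{p},L^{q})\ll\abs{\lambda(N)}\sum_{l>M}2^{-l}\ll\abs{\lambda(N)}$. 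Adding the three contributions gives $e_{\eta+bn}(\Lambda U_{p},L^{q})\le C\abs{\lambda(N)}$ when $1\le q\le2$, and \eqref{ecuacionentropiasup3} when $2\le p,q\le\infty$ (the term $C\abs{\lambda(N)}$ from $T_{0}$ being absorbed into the $q^{1/2}$/supremum term). I expect the low-frequency estimate to be the main obstacle: one must run the volumetric covering argument so that it returns exactly the scale $\abs{\lambda(N)}$, which requires both the comparison $(\det\widetilde{\Lambda}_{n})^{1/n}\asymp(\prod_{l=1}^{N}\abs{\lambda(l)}^{d_{l}})^{1/n}$ and careful accounting of the $bn$ entropy units spent shrinking the covering radius from $\chi_{k}$ to $2^{-b}\chi_{k}$; the middle and tail blocks are routine once Lemma~\ref{lemaentropia}, Theorem~\ref{teorema-media-1} and the definition of $K_{\epsilon,2}$ are available.
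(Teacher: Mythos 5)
Your proposal is correct and follows essentially the same route as the paper's proof: the same three-part block decomposition (which the paper imports as a set inclusion from \cite{Sergio} and you rederive via the projections $P_l$ together with a Nikolskii-type inequality $\norm{g}_{q}\ll\theta_l^{1/2-1/q}\norm{g}_{2}$), the same volumetric packing argument at radius $2^{-b}\chi_k$ for the finite-dimensional piece (where, as you note, the hypothesis $\abs{\lambda(N)}\geq 2^{-b}\chi_k$ is exactly what keeps the covering balls inside a fixed dilate of $\widetilde{\Lambda}_n B^n_{(2)}$), and the same combination of Lemma \ref{lemaentropia}, Theorem \ref{teorema-media-1} and the $K_{\epsilon,2}$ condition for the middle blocks and the $e_1$-diameter bound for the tail. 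The determinant-versus-$\prod_{l}\abs{\lambda(l)}^{d_l}$ comparison you single out as the delicate point is treated in the paper at exactly the level of your sketch, so no further idea is missing.
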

\begin{proof}
It is sufficient to prove the theorem for $p=2$ and $q\geq 2$. We denote $B_{q}^{N_{k},N_{k+1}}= U_{q}\cap  \mathcal{ T}_{N_{k},N_{k+1}}$. From 
\cite[(37), p. 1309]{Sergio}, for $p=2$, we obtain
\[
\Lambda U_{2}\subset \Lambda U_{2}\cap \mathcal{T}_{N}+\bigoplus_{l=1}^{M}\abs{\lambda(N_{l})}B_{2}^{N_{l},N_{l+1}}+\bigoplus_{l=M+1}^{\infty}\abs{\lambda(N_{l})}\theta^{1/2-1/q}_{N_{l},N_{l+1}}B^{N_{l},N_{l+1}}_{q}.
\]
Using properties of entropy numbers and (\ref{ecuaentroart2}) we get
\begin{align}\label{ecuacionentropiasup5}
e_{\eta+bn}\left(\Lambda U_{2},L^{q}\right)&\leq e_{k+bn}\left(\Lambda U_{2}\cap \mathcal{T}_{N},L^{q}\cap \mathcal{T}_{N}\right)+\sum_{l=1}^{M}\abs{\lambda(N_{l})}e_{m_{l}}\left(B_{2}^{N_{l},N_{l+1}},L^{q}\cap \mathcal{T}_{N_{l},N_{l+1}}\right)\nonumber\\
		& +e_{1}\left(\bigoplus_{l=M+1}^{\infty}\abs{\lambda(N_{l})}\theta^{1/2-1/q}_{N_{l},N_{l+1}}B^{N_{l},N_{l+1}}_{q},L^{q}\cap \mathcal{T}_{N_{l},N_{l+1}} \right) .
\end{align}
and
\begin{align}\label{ecuacionentropiasup6}
&e_{1+1-1}\left(\bigoplus_{l=M+1}^{\infty}\abs{\lambda(N_{l})}\theta^{1/2-1/q}_{N_{l},N_{l+1}}B^{N_{l},N_{l+1}}_{q},L^{q}\cap \mathcal{T}_{N_{l},N_{l+1}} \right)\nonumber\\
&\leq\sum_{l=M+1}^{\infty}\abs{\lambda(N_{l})}\theta^{1/2-1/q}_{N_{l},N_{l+1}}e_{1}\left( B^{N_{l},N_{l+1}}_{q},L^{q}\cap \mathcal{T}_{N_{l},N_{l+1}}\right)=\sum_{l=M+1}^{\infty}\abs{\lambda(N_{l})}\theta^{1/2-1/q}_{N_{l},N_{l+1}}.
\end{align}
Therefore, from (\ref{ecuacionentropiasup5}) and (\ref{ecuacionentropiasup6}) it follows that
\begin{align}\label{ecuacionentropiasup7}
e_{\eta+bn}\left(\Lambda U_{2},L^{q}\right)&\leq e_{k+bn}\left(\Lambda U_{2}\cap \mathcal{T}_{N},L^{q}\cap \mathcal{T}_{N}\right)+\sum_{l=1}^{M}\abs{\lambda(N_{l})}e_{m_{l}}\left(B^{N_{l},N_{l+1}}_{2},L^{q}\cap \mathcal{T}_{N_{l},N_{l+1}}\right)\nonumber\\
		&+\sum_{l=M+1}^{\infty}\abs{\lambda(N_{l})}\theta^{1/2-1/q}_{N_{l},N_{l+1}}.
\end{align} 
We show first that
\begin{equation}\label{ecuacionentropiasup8}
e_{k+bn}\left(\Lambda U_{2}\cap \mathcal{T}_{N},L^{q}\cap \mathcal{T}_{N}\right)=e_{k+bn}\left(\widetilde{\Lambda}_{n}B^{n}_{(2)},\left(\R^{n},\norm{\cdot}_{(q)}\right)\right)\leq 2^{-b}\chi_{k}=2^{-b}\chi_{k}^{(q)}.
\end{equation}
For $q\geq 2$, we have $U_{q}\subset U_{2}$ and
\begin{equation}\label{ecuacionentropiasup9}
\abs{\lambda(N)}B^{n}_{(q)} \subset \abs{\lambda(N)}B^{n}_{(2)}\subset J^{-1}\left( \Lambda_{n} B^{n}_{2}\right)=\widetilde{\Lambda}_{n}B_{(2)}^{n}. 
\end{equation}
Consider a maximal $2^{-b}\chi_{k}$-distinguishable subset $\Theta=\lbrace z_{j}\rbrace_{1\leq j\leq m}$ of $\widetilde{\Lambda}_{n}B^{n}_{(2)}=J^{-1}\left( \Lambda_{n} B^{n}_{2}\right)$ in $\left(\R^{n},\norm{\cdot}_{(q)}\right)$, that is, $\norm{z_{i}-z_{j}}_{(q)}\geq 2^{-b}\chi_{k}$ for all $i\neq j$. By maximality, $\Theta$ is a $2^{-b}\chi_{k}$-net of $\widetilde{\Lambda}_{n}B_{(2)}^{n}$ in $\left(\R^{n},\norm{\cdot}_{(q)}\right)$. Note that the balls 
\[
z_{j}+\biggl (\frac{2^{-b}\chi_{k}}{2}\biggl) B^{n}_{(q)}, \hspace{2ex}1\leq j\leq m
\]
 are disjoint. Applying (\ref{ecuacionentropiasup9}) and the inequality $\abs{\lambda(N)}\geq 2^{-b}\chi_{k}$, we get
\begin{equation*}
\widetilde{\Lambda}_{n}B_{(2)}^{n}+\left(\frac{2^{-b}\chi_{k}}{2}\right) B^{n}_{(q)}\subset \widetilde{\Lambda}_{n}B_{(2)}^{n}+\left(\frac{\abs{\lambda(N)}}{2}B^{n}_{(q)} \right)  \subset \frac{3}{2}\widetilde{\Lambda}_{n}B_{(2)}^{n}.
\end{equation*}
Then,
\[
\bigcup^{m}_{j=1}\biggl(z_{j}+\frac{2^{-b}\chi_{k}}{2}B^{n}_{(q)}\biggl)\subset\frac{3}{2}\widetilde{\Lambda}_{n}B_{(2)}^{n},
\]
where the union is of disjoint sets. Hence, by taking  volumes
\begin{equation*}
\left(\frac{2^{-b}\chi_{k}}{2}\right)^{n}m\cdot \textup{Vol}_{n}\left(B^{n}_{(q)}\right)\leq\frac{3^{n}}{2^{n}}\left(\prod^{N}_{j=1}\abs{\lambda(j)}^{d_{j}} \right)\textup{Vol}_{n}\left(B^{n}_{(2)}\right),
\end{equation*}
and therefore 
\begin{equation}\label{ecuacionentropiasup10}
m\leq \left(\frac{3}{2^{-b}\chi_{k}}\right)^{n}\frac{\textup{Vol}_{n}\left(B^{n}_{(2)}\right)}{\textup{Vol}_{n}\left(B_{(q)}^{n}\right)}\left(\prod_{j=1}^{N} \abs{\lambda(j)}^{d_{j}}\right).
\end{equation}
By definition of $\chi_{k}$, we get
\begin{equation}\label{ecuacionentropiasup11}
\left(\frac{3}{\chi_{k}}\right)^{n}\leq 2^{k-1}\left( \frac{\textup{Vol}_{n}\left(B^{n}_{(2)}\right)}{\textup{Vol}_{n}\left(B^{n}_{(q)}\right)}\prod_{j=1}^{N} \abs{\lambda(j)}^{d_{j}}\right)^{-1}.
\end{equation}
From (\ref{ecuacionentropiasup10}) and (\ref{ecuacionentropiasup11}), it follows that $m\leq 2^{k+bn-1}$ and $m$ is the cardinality of a $2^{-b}\chi_{k}$-net of $\widetilde{\Lambda}_{n}B_{(2)}^{n}$ in $\left(\R^{n},\norm{\cdot}_{(q)} \right)$. Therefore (\ref{ecuacionentropiasup8}) follows from the definition of entropy numbers.

Now let us find an upper bound for the expression
\begin{equation}\label{ecuacionentropiasup12}
\sum_{j=1}^{M}\abs{\lambda(N_{j})}e_{m_{j}}\left(B_{2}^{N_{j},N_{j+1}},L^{q}\cap \mathcal{T}_{N_{j},N_{j+1}}\right)+ \sum_{j=M+1}^{\infty}\abs{\lambda(N_{j})}\theta^{1/2-1/q}_{N_{j},N_{j+1}}.
\end{equation}
Applying Lemma \ref{lemaentropia} to the Banach space $E_{j}=\left(\R^{\theta_{N_{j},N_{j+1}}},\norm{\cdot}_{(q)}\right)$ and $m_{j}$, remembering that \linebreak $m_{j}=[2^{-\epsilon j}\theta_{N_{1},N_{2}}]+1\geq 2^{-\epsilon j}\theta_{N_{1},N_{2}}$ and that $M^{*}(E_{j})\leq M(E_{j})$ we obtain
\[
e_{m_{j}}\left(B^{N_{j},N_{j+1}}_{2},L^{q}\cap \mathcal{T}_{N_{j},N_{j+1}}\right)=e_{m_{j}}\left(B_{(2)}^{N_{j},N_{j+1}},E_{j}\right)\ll M^{*}(E_{j})\frac{\theta^{1/2}_{N_{j},N_{j+1}}}{m_{j}^{1/2}}\ll \frac{\theta^{1/2}_{N_{j},N_{j+1}}}{\theta^{1/2}_{N_{1},N_{2}}}2^{\epsilon j/ 2}M(E_{j}).
\]
Thus by Theorem \ref{teorema-media-1}
\[
e_{m_{j}}\left(B^{N_{j},N_{j+1}}_{2},L^{q}\cap \mathcal{T}_{N_{j},N_{j+1}}\right)\ll \frac{\theta^{1/2}_{N_{j},N_{j+1}}}{\theta^{1/2}_{N_{1},N_{2}}}2^{\epsilon j/ 2}\left\{
\begin{array}{ll}
q^{1/2},&\textbf{$2\leq q<\infty$},\\
(\log_{2}\theta_{N_{j},N_{j+1}})^{1/2},&\textbf{$q=\infty$},
\end{array}
\right.
\]
and since $\lambda(N_{j})\leq \lambda(N)2^{-j+1}$ and $\Lambda \in K_{\epsilon,2}$, we get
\begin{align}\label{ecuacionentropiasup13}
&\sum_{j=1}^{M}\abs{\lambda(N_{j})}e_{m_{j}}\left(B^{N_{j},N_{j+1}}_{2},L^{q}\cap \mathcal{T}_{N_{j},N_{j+1}}\right)\nonumber\\
&\ll \abs{\lambda(N)}\sum_{j=1}^{M}2^{-j(1-\epsilon/2)}\frac{\theta^{1/2}_{N_{j},N_{j+1}}}{\theta^{1/2}_{N_{1},N_{2}}}\left\{
\begin{array}{ll}
q^{1/2},&\textbf{$2\leq q<\infty$},\\
(\log_{2}\theta_{N_{j},N_{j+1}})^{1/2},&\textbf{$q=\infty$},
\end{array}
\right.\nonumber \\
&\ll \abs{\lambda(N)}\left\{
\begin{array}{ll}
q^{1/2},&\textbf{$2\leq q<\infty$},\\
\sup_{1\leq j\leq M}(\log_{2}\theta_{N_{j},N_{j+1}})^{1/2},&\textbf{$q=\infty$},
\end{array}
\right.
\end{align}
and
\begin{equation}\label{ecuacionentropiasup14}
\sum_{j=M+1}^{\infty}\abs{\lambda(N_{j})}\theta^{1/2-1/q}_{N_{j},N_{j+1}}\leq \abs{\lambda(N)}\sum_{j=M+1}^{\infty}2^{-j+1}\theta^{1/2-1/q}_{N_{j},N_{j+1}}\ll \abs{\lambda(N)} \sum_{j=M+1}^{\infty}2^{-j}\theta^{1/2-1/q}_{N_{j},N_{j+1}}. 
\end{equation}
Therefore, we find an upper bound for (\ref{ecuacionentropiasup12}). Hence, from  (\ref{ecuacionentropiasup7}), (\ref{ecuacionentropiasup8}), (\ref{ecuacionentropiasup13}) and (\ref{ecuacionentropiasup14}), we obtain (\ref{ecuacionentropiasup3}). The first estimate follows from (\ref{ecuacionentropiasup3}) for $q=2$.
\end{proof}

\section{Proofs of Theorems \ref{aplicacionesentropiafinitas1} and \ref{teoremainfinitasentropia}}

\begin{lemma}\label{lemafinitaentropia}
	For $\gamma,\xi>0$ and $k\in\N$, let $g:[2,\infty)\to \R$ be the function defined by
	\[
	g(x)=-\frac{k}{x}-\frac{\gamma}{d}\log_{2}x
	-\xi \log_{2}(\log_{2}x).
	\]
	Then there are constants $C_{1},C_{2}, \overline{C}>0$ depending only on $\gamma,\xi$ and $d$ such that the absolute maximum value of the function $g$ is assumed at a point $x_{k}$ satisfying
	\[
	C_{1}k\leq x_{k}\leq C_{2}k, \hspace{2ex} k\geq 1
	\]
	and
	\[
	-\overline{C}-\frac{\gamma}{d}\log_{2}k-\xi \log_{2}(\log_{2}k) \leq 
	g(x_{k})\leq \overline{C}-\frac{\gamma}{d}\log_{2}k-\xi\log_{2}(\log_{2}k).
	\]
\end{lemma}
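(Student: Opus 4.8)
The plan is to treat $g$ as a smooth function of a real variable on $[2,\infty)$ and locate its maximum by elementary calculus. First I would compute the derivative
\[
g'(x)=\frac{k}{x^{2}}-\frac{\gamma}{d}\cdot\frac{1}{x\ln 2}-\frac{\xi}{x\ln 2\,\log_{2}x},
\]
and multiply through by $x^{2}$ to get $x^{2}g'(x)=k-\frac{\gamma}{d\ln 2}x-\frac{\xi}{\ln 2}\cdot\frac{x}{\log_{2}x}$. The right-hand side is a strictly decreasing function of $x$ on $[2,\infty)$ (each of the two subtracted terms is increasing in $x$, since $x/\log_2 x$ is increasing for $x\ge e$, and one checks the small interval $[2,e]$ directly or simply enlarges the lower endpoint). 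It is positive at $x=2$ for $k$ large and tends to $-\infty$, so there is a unique zero $x_{k}$, which is the absolute maximum of $g$; for the finitely many small $k$ one adjusts constants. So the critical-point equation is
\[
k=\frac{\gamma}{d\ln 2}\,x_{k}+\frac{\xi}{\ln 2}\cdot\frac{x_{k}}{\log_{2}x_{k}}.
\]

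Next I would extract the two-sided bound $C_{1}k\le x_{k}\le C_{2}k$. From the critical-point equation, dropping the nonnegative second term gives $k\ge \frac{\gamma}{d\ln 2}x_{k}$, hence $x_{k}\le \frac{d\ln 2}{\gamma}k=:C_{2}k$. For the lower bound, note that $\frac{x_{k}}{\log_{2}x_{k}}\le x_{k}$ for $x_{k}\ge 2$, so $k\le\bigl(\frac{\gamma}{d\ln 2}+\frac{\xi}{\ln 2}\bigr)x_{k}$, giving $x_{k}\ge C_{1}k$ with $C_{1}=\ln 2/(\gamma/d+\xi)$. (These crude constants suffice; the point of the lemma is only the order of magnitude.)

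Finally I would estimate $g(x_{k})$. Substituting $k/x_{k}=\frac{\gamma}{d\ln 2}+\frac{\xi}{\ln 2\log_{2}x_{k}}$ from the critical-point equation, the first term $-k/x_{k}$ of $g$ is bounded in absolute value by the constant $\frac{\gamma}{d\ln 2}+\frac{\xi}{\ln 2}$ (using $\log_{2}x_{k}\ge 1$). For the remaining two terms, $x_{k}\asymp k$ gives $\log_{2}x_{k}=\log_{2}k+O(1)$ and $\log_{2}(\log_{2}x_{k})=\log_{2}(\log_{2}k)+o(1)$ for $k\ge 2$; more precisely $\log_{2}(\log_{2}x_{k})-\log_{2}(\log_{2}k)$ is bounded because $\log_2 x_k/\log_2 k\to 1$. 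Collecting the error terms into a single constant $\overline{C}$ depending only on $\gamma,\xi,d$ yields
\[
-\overline{C}-\frac{\gamma}{d}\log_{2}k-\xi\log_{2}(\log_{2}k)\le g(x_{k})\le \overline{C}-\frac{\gamma}{d}\log_{2}k-\xi\log_{2}(\log_{2}k).
\]
The main obstacle, such as it is, is purely bookkeeping: verifying monotonicity of $x^{2}g'(x)$ cleanly on the whole half-line (handled by shifting the left endpoint or a direct check on $[2,e]$) and controlling the iterated logarithm $\log_{2}(\log_{2}x_{k})$ uniformly in $k$, which follows from $x_{k}\asymp k$ once one notes $\log_2(\log_2 x)$ has bounded oscillation under multiplicative perturbations of $x$. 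I would also remark that the case $\xi=0$ in the theorems is covered by continuity (or by rerunning the argument with the $\xi$-term absent), so assuming $\xi>0$ in the lemma costs nothing.
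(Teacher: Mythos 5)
Your proposal is correct and follows essentially the same route as the paper: both set $g'(x)=0$, rewrite the critical-point equation as $k=\tfrac{\gamma}{d\ln 2}x_{k}+\tfrac{\xi}{\ln 2}\cdot\tfrac{x_{k}}{\log_{2}x_{k}}$, bound the resulting quotient between constants to get $C_{1}k\le x_{k}\le C_{2}k$, and then substitute $x_{k}\asymp k$ back into $g$. Your treatment of uniqueness of the critical point (via monotonicity of $x^{2}g'(x)$, with the $[2,e]$ caveat) is in fact slightly more careful than the paper, which simply asserts it.
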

\begin{proof}
	We have that $g'(x)=0$ if and only if $x=k (\ln 2)^{2}d(\log_{2}x)/((\ln 2)(\log_{2}x)\gamma+d\xi)$.
	
	The function $g'$ is zero for only one point $x_{k}$, where the function $g$ assumes its absolute maximum value. If $h(x)=\log_{2}x/\left((\ln 2)(\log_{2}x\right)\gamma+ d\xi)$, then $h(2)\leq h(x)\leq 1/\left(\gamma \ln 2\right)$ for $x\geq 2$ and therefore 
	\begin{equation*}
	\frac{d (\ln 2)^{2}}{\gamma (\ln 2)+d\xi}k\leq k(\ln 2)^{2}d \frac{\log_{2}x}{(\ln 2)(\log_{2}x)\gamma+d\xi}\leq k \frac{(\ln 2)d}{\gamma}.
	\end{equation*}
Then, there are constants  $C_{1},C_{2}$ depending only on $d,\gamma,\xi$ such that $	C_{1}k\leq x_{k}\leq C_{2}k$, $k\geq 1$. 	Let $C_{1}\leq C\leq C_{2}$ such that $x_{k}=Ck$. Thus
	\begin{equation*}
	g(x_{k})=g(Ck)=-\frac{k}{Ck}-\frac{\gamma}{d}\log_{2}(Ck)-\xi \log_{2}(\log_{2}Ck)\leq \overline{C}-\frac{\gamma}{d}\log_{2}k-\xi \log_{2}(\log_{2}k)
	\end{equation*}
	and
	\[
	g(x_{k})\geq -C'-\frac{\gamma}{d}\log_{2}k-\xi \log_{2}(\log_{2}k).
	\]
\end{proof}
\begin{proof}[\textup{\textbf{Proof of Theorem \ref{aplicacionesentropiafinitas1}}.}]
We start proving (\ref{aplicacionfinitaentropia3}). Let $n=\dim \mathcal{T}_{N}$. Since $n\asymp N^{d}$ then $\lambda(N)=N^{-\gamma}(\log_{2}N)^{-\xi}\asymp n^{-\gamma/d}(\log_{2}n)^{-\xi}$, therefore by (\ref{ecuacionentropianueva1}) we get
	\[
	e_{n}\left(\Lambda^{(1)}U_{p},L^{q}\right)\gg n^{-\gamma/d}(\log_{2}n)^{-\xi}\mathcal{V}_{n}.
	\]
	Consider $k\in\N$, such that $\dim \mathcal{T}_{N-1}\leq k\leq \dim \mathcal{T}_{N}$. Then by the above estimate we obtain  \begin{equation*}
	e_{k}\left(\Lambda^{(1)}U_{p},L^{q}\right)\geq e_{n}\left(\Lambda^{(1)}U_{p},L^{q}\right)\\
	\gg n^{-\gamma/d}(\log_{2}n)^{-\xi}\mathcal{V}_{n}.
	\end{equation*}
	But, $n\asymp N^{d}\asymp (N-1)^{d}\asymp \dim \mathcal{T}_{N-1}\leq k\leq \dim \mathcal{T}_{N}=n$, then $k\asymp n$ and therefore we obtain (\ref{aplicacionfinitaentropia3}).
	
	We will prove (\ref{aplicacionfinitaentropia2}). Note that 
	\begin{equation}\label{aplicacionfinitaentropia4}
	\sigma_{n}=\left( \prod^{N}_{j=2}\abs{\lambda(j)}^{d_{j}} \right)^{1/n}\geq  \left( \abs{\lambda(N)}^{\sum_{j=2}^{N}d_{j}}\right)^{1/n}\geq N^{-\gamma}(\log_{2}N)^{-\xi}\asymp n^{-\gamma/d}(\log_{2}n)^{-\xi}.
	\end{equation}
	 Moreover,
	\begin{equation}\label{aplicacionfinitaentropia5}
	\log_{2}\sigma_{n}=\log_{2}\left( \prod^{N}_{j=2}\abs{\lambda(j)}^{d_{j}} \right)^{1/n}\leq -\frac{\gamma}{n}\sum_{j=3}^{N}d_{j}\log_{2}j-\frac{\xi}{n}\sum_{j=3}^{N}d_{j}\log_{2}\left(\log_{2}j\right)+C_{1}.
	\end{equation}
	From Proposition \ref{proposicionaplicaciones1}, it follows that $d_{j}\geq E j^{d-1}-C_{2}j^{\theta}$, where $E=2^{1-d}\pi^{d/2}/\Gamma(d/2)$, $d-2\leq \theta< d-1$ and $C_{2}$ is a positive constant. If $f(x)=Ex^{d-1}\log_{2}x-C_{2}x^{\theta}\log_{2}x$ and $	g(x)=Ex^{d-1}\log_{2}(\log_{2}x)-C_{2}x^{\theta}\log_{2}(\log_{2}x)$, then
	\[
	\frac{1}{n}\sum_{j=3}^{N}d_{j}\log_{2}j\geq \frac{1}{n}\int_{2}^{N}f(x)dx\geq \frac{1}{n}\left( \frac{E}{d}N^{d}\log_{2}N-C_{2}\frac{N^{\theta+1}}{\theta+1}\log_{2}N+C_{3}\right)
	\]
	and
	\[
	\frac{1}{n}\sum_{j=3}^{N}d_{j}\log_{2}(\log_{2}j)\geq \frac{1}{n}\int_{2}^{N}g(x)dx\geq \frac{1}{n} \left( \frac{E}{d}N^{d}\log_{2}(\log_{2}N)-C_{2}\frac{N^{\theta+1}}{\theta+1}\log_{2}(\log_{2}N)+C_{4}\right).
	\]
 From Proposition \ref{proposicionaplicaciones1}, it follows that $1/n\geq 1/(FN^{d})-C/(F^{2}N^{d+1})$, where $F=E/d$ and therefore
	\begin{equation}\label{ecuacionfinitaentropia6}
	\frac{1}{n}\sum_{j=3}^{N}d_{j}\log_{2}j\geq \biggl (\frac{1}{FN^{d}}-\frac{C}{F^{2}N^{d+1}} \biggl)\biggl(\frac{E}{d}N^{d}\log_{2}N-C_{2}\frac{N^{\theta+1}}{\theta+1}\log_{2}N+C_{3}\biggl)\geq \log_{2}N+C_{5},
	\end{equation}
	and in the same way we get
	\begin{equation}\label{ecuacionfinitaentropia7}
	\frac{1}{n}\sum_{j=3}^{N}d_{j}\log_{2}(\log_{2}j)\geq \log_{2}(\log_{2}N)+C_{6}.
	\end{equation}
	Thus, we obtain by (\ref{aplicacionfinitaentropia5})-(\ref{ecuacionfinitaentropia7})
	\begin{equation*}
	\log_{2}\sigma_{n}\leq -\gamma \log_{2}N-\xi \log_{2}(\log_{2}N)+C_{7}
	\end{equation*}
	and since $N^{d}\asymp n$, then
	\begin{equation}\label{ecuacionfinitaentropia8}
	\sigma_{n}\ll 2^{-\gamma \log_{2}N}2^{-\xi \log_{2}(\log_{2}N)}\asymp n^{-\gamma/d}(\log_{2}n)^{-\xi}.
	\end{equation}
	By  (\ref{aplicacionfinitaentropia4}) and (\ref{ecuacionfinitaentropia8}), we get
	\begin{equation}\label{ecuacionfinitaentropia9}
	\sigma_{n}\asymp n^{-\gamma/d}(\log_{2}n)^{-\xi}.
	\end{equation}
	Let $2\leq q <\infty$. From (\ref{ecuacionentropiasup2}) and (\ref{ecuacionfinitaentropia9}), we obtain 
	\begin{equation}\label{ecuacionfinitaentropia10}
	\chi_{k}=3\sup_{\overline{N}\geq 1}\left( \frac{2^{-k+1}\textup{Vol}_{\overline{n}}\left(B^{\overline{n}}_{(2)}\right)}{\textup{Vol}_{\overline{n}}\left(B^{\overline{n}}_{(q)}\right)}\prod^{\overline{N} }_{j=1}\abs{\lambda(j)}^{d_{j}}\right)^{1/\overline{n}}
	\asymp \sup_{\overline{N}\geq 1} 2^{-k/\overline{n}}\sigma_{\overline{n}}\asymp \sup_{\overline{N}\geq 1} 2^{-k/\overline{n}}\overline{n}^{-\gamma/d}(\log_{2}\overline{n})^{-\xi},
	\end{equation}
	where $\overline{n}=\dim \mathcal{T}_{\overline{N}}$. 	Consider 
	\[
	g(x)=-\frac{k}{n}-\frac{\gamma}{d}\log_{2}x-\xi\log_{2}(\log_{2}x).
	\]
	 Thus, by Lemma \ref{lemafinitaentropia} and  (\ref{ecuacionfinitaentropia10}), for $2\leq q <\infty$, it follows that
	\begin{equation}\label{artecua22}
	\chi_{k} \asymp \sup_{\overline{N}\geq 1}2^{g(\overline{n})}=2^{\sup_{\overline{N}\geq 1}g(\overline{n})}\asymp 2^{g(x_{k})}\asymp 2^{-(\gamma/d)\log_{2}k-\xi \log_{2}(\log_{2}k)}= k^{-\gamma/d}(\log_{2}k)^{-\xi}.
	\end{equation}

	From (\ref{artecua22}) we have that $\chi_{k}\asymp \lambda (k^{1/d})\asymp \lambda ([k^{1/d}])$ and thus there is $b\in\N_{0}$ such that $\lambda(N)\geq 2^{-b}\chi_{k}$ for $N=[k^{1/d}]$, $k\in \N$, and from Remark \ref{Remark36} we have that $\Lambda^{(1)}\in K_{\epsilon, 2}$ for $0<\epsilon< 2-d/\gamma$. Hence, applying Theorem \ref{teoremasuperioresentropia}, observing that $\sum_{j=M+1}^{\infty}2^{j}\theta^{1/2-1/q}_{N_{j},N_{j+1}}\ll 1$ (see Remark \ref{Remark36}) and from (\ref{artecua22}), for $2\leq p\leq \infty,\hspace{1ex} 2\leq q<\infty$, we get	\begin{equation}\label{ecuacionfinitaentropia11}
e_{\eta+bn}\left(\Lambda^{(1)}U_{p},L^{q}\right)\ll C\lambda(N)q^{1/2}\ll k^{-\gamma/d}(\log_{2}k)^{-\xi}.
	\end{equation}
	 Using estimates from Remark \ref{Remark36} and Remark \ref{observacion-superiores-1} we get
	\[
	\eta+bn=k+bn+\sum_{j=1}^{M}m_{j}\asymp k+N^{d} +C_{\epsilon}\theta_{N_{1},N_{2}}\asymp k+ N^{d}\asymp k,
	\]
	therefore from (\ref{ecuacionfinitaentropia11}) we get
	\begin{equation*}\label{ecuacionfinitaentropia12}
		e_{k}\left(\Lambda^{(1)}U_{p},L^{q}\right)\ll k^{-\gamma/d}(\log_{2}k)^{-\xi}q^{1/2},\hspace{2ex} 2\leq p\leq \infty,\hspace{1ex} 2\leq q<\infty.
	\end{equation*}
The estimate for $1\leq q\leq 2$ follows from the above estimate for $q=2$. For $2\leq p\leq \infty$ and $q=\infty$, using (\ref{ecuacionentropiasup2}) and proceeding in a similar way to the previous case we obtain
	\begin{equation*}\label{ecuacionfinitaentropia13}
	e_{k}\left(\Lambda^{(1)}U_{p},L^{q}\right)\ll k^{-\gamma/d}(\log_{2}k)^{-\xi}(\log_{2}k)^{1/2},
	\end{equation*}
	 concluding the proof of (\ref{aplicacionfinitaentropia2}).

Now, we will prove (\ref{applicationfinite4}). Let $X$ be a Banach space and $A \subset X$. The Kolmogorov $n$-width of $A$ in $X$ is defined by 
$$
d_n(A,X)= \inf_{X_n} \sup_{x\in A} \inf_{y\in X_n} \|x-y\|_{X},
$$
where $X_n$ runs over all subspaces of $X$ of dimension $n$, and the Gel'fand	$n$-width of $A$ in $X$ is defined by
$$
d^n(A,X)= \inf_{L^n} \sup_{x\in A\cap L^n}  \|x\|_{X},
$$
where $L^n$ runs over all subspaces of $X$ of codimension $n$. It was proved in \cite[Theorem 1]{Sergio} that for 
$\gamma >d/2$, $\xi \geq 0$ and all $n\in \mathbb{N}$,
\begin{equation}\label{finita1}
d_{n}\left(\Lambda^{(1)}U_{2},L^{q}\right)\ll n^{-\gamma/d}(\log_{2}n)^{-\xi}\left\{
\begin{array}{ll}
q^{1/2},&{2\leq q< \infty},\\
(\log_{2}n)^{1/2},& {q=\infty}.
\end{array}
\right.
\end{equation}	
From the duality of Kolmogorov and Gel'fand $n$-widths (see \cite[p. 34]{Pinkus}) it follows that
\begin{equation}\label{finita2}
d^{n}\left(\Lambda^{(1)}U_{q'},L^{2}\right)=
d_{n}\left(\Lambda^{(1)}U_{2},L^{q}\right)\ll n^{-\gamma/d}(\log_{2}n)^{-\xi}\left\{
\begin{array}{ll}
q^{1/2},&{2\leq q< \infty},\\
(\log_{2}n)^{1/2},& {q=\infty},
\end{array}
\right.
\end{equation}
where $1/q+1/q'=1$. Let $\{s_n\}$ be the sequence $\{d_n\}$ or  $\{d^n\}$ and let $f:\mathbb{N}\rightarrow \mathbb{R}$ be a positive and increasing function (for large $l \in \mathbb{N}$) such that, there is a constant $c>0$ and $f\left(2^j\right) \leq c f\left(2^{j-1}\right)$, for all $j \in \mathbb{N}$.
Then, there is a constant $C>0$ such that, for all $n \in \mathbb{N}$ we have
\begin{equation}\label{finita3}
\sup_{1\leq l\leq n} f(l)e_{l}(A,X) \leq C \sup_{1\leq l\leq n} f(l)s_{l}(A,X)
\end{equation}
(see \cite[Theorem 1]{Bernd2}, \cite[1.3.3]{Edmunds} and \cite[12.1.8]{Pietsch}). Consider the function $f$ defined for $l\in \mathbb{N}$, $l\geq 2$, by
$$
f(l)=  l^{\gamma/d}(\log_{2}l)^{\xi}  \left\{
\begin{array}{ll}
q^{-1/2},&{2\leq q< \infty},\\
(\log_{2}l)^{-1/2},& {q=\infty}.
\end{array}
\right. 
$$
This function $f$ satisfies the above condition and therefore from \ref{finita3}, \ref{finita1} and \ref{finita2} we have
$$
\sup_{1\leq l\leq n} f(l)e_{l} \left(\Lambda^{(1)}U_{2},L^{q}\right) 
\leq C \sup_{1\leq l\leq n} f(l)d_l \left(\Lambda^{(1)}U_{2},L^{q}\right) \leq \bar{C},
$$
$$
\sup_{1\leq l\leq n} f(l)e_{l} \left(\Lambda^{(1)}U_{q'},L^{2}\right) 
\leq C \sup_{1\leq l\leq n} f(l)d^l \left(\Lambda^{(1)}U_{q'}, L^{2}\right) \leq \bar{C},
$$
that is,
\begin{equation}\label{finita4}
e_{n}\left(\Lambda^{(1)}U_{2},L^{q}\right) \leq \bar{C} n^{-\gamma/d}(\log_{2}n)^{-\xi}
\left\{
\begin{array}{ll}
q^{1/2},&{2\leq q< \infty},\\
(\log_{2}n)^{1/2},& {q=\infty},
\end{array}
\right.
\end{equation}
\begin{equation}\label{finita5}
e_{n}\left(\Lambda^{(1)}U_{p},L^{2}\right) \leq \bar{C} n^{-\gamma/d}(\log_{2}n)^{-\xi}
\left\{
\begin{array}{ll}
(p-1)^{-1/2},&{1< p\leq 2},\\
(\log_{2}n)^{1/2},& {p=1}.
\end{array}
\right.
\end{equation}
Suppose $\gamma>d$ and consider the multiplier sequence $\{ \lambda^{(1)}_{\bf k}\}_{{\bf k} \in \mathbb{N}^d_{0}}$,
$\lambda^{(1)}_{\bf k}=\lambda^{(1)}(|{\bf k}|)$ where $\lambda^{(1)}$ is the function
$\lambda^{(1)}(t)= t^{-\gamma/2}(\log_{2} t)^{\xi/2}$ for $t>1$ and  $\lambda^{(1)}(t)=0$ for $0\leq t \leq 1$.
We denote by $R$ and $S$ the multiplier operators associated with the sequence $\{ \lambda^{(1)}_{\bf k}\}_{{\bf k} \in \mathbb{N}^d_{0}}$, $R:L^2\rightarrow L^q$ and $S:L^p\rightarrow L^2$, for $1\leq p \leq 2$ and $2 \leq q \leq \infty$. We have that the operator $\Lambda^{(1)}=R\circ S:L^p\rightarrow L^q$ is bounded. Applying the multiplicative property of entropy numbers (\ref{multiplicative}) and
(\ref{finita4}),  (\ref{finita5}) we get 
\begin{eqnarray*}
e_{2n-1}\left(\Lambda^{(1)}U_{p},L^{q}\right)&=&e_{2n-1}\left(\Lambda^{(1)}\right)=e_{2n-1}\left(R\circ S\right)\\
&\leq& e_{n}\left(R\right) e_{n}\left(S\right) = e_{n}\left(RU_{2},L^{q}\right) e_{n}\left(SU_{p},L^{2}\right)\\
&\leq& {\bar{C}}^2  n^{-\gamma/d}(\log_{2}n)^{-\xi} 
\left\{
\begin{array}{ll}
q^{1/2},&{2\leq q< \infty},\\
(\log_{2}n)^{1/2},& {q=\infty},
\end{array}
\right\} \cdot
\left\{
\begin{array}{ll}
(p-1)^{-1/2},&{1< p\leq 2},\\
(\log_{2}n)^{1/2},& {p=1},
\end{array}
\right\}  \\
&=& {\bar{C}}^2  n^{-\gamma/d}(\log_{2}n)^{-\xi}
\left\{
\begin{array}{ll}
q^{1/2}(p-1)^{-1/2},&\textbf{$1< p\leq 2$, $2\leq q<\infty$},\\
	q^{1/2}(\log_{2}n)^{1/2},&\textbf{$p=1$, $2\leq q<\infty$},\\
	(p-1)^{-1/2}(\log_{2}n)^{1/2},&\textbf{$1<p\leq 2$, $q=\infty$},\\
	\log_{2}n,&\textbf{$p=1$, $q=\infty$.}
	\end{array}
	\right.
\end{eqnarray*}
If $k\in \mathbb{N}$, $2n-1 \leq k \leq 2n+1$, then 
$e_{k}\left(\Lambda^{(1)}U_{p},L^{q}\right)\leq e_{2n-1}\left(\Lambda^{(1)}U_{p},L^{q}\right)$ and 
\linebreak $n^{-\gamma/d}(\log_{2}n)^{-\xi} \leq C_1 k^{-\gamma/d}(\log_{2}k)^{-\xi}$, $\log_{2}n \leq C_1\log_{2}k$ for
a constant $C_1>0$ and all $n,k \in \mathbb{N}$.
Hence (\ref{applicationfinite4}) follows from the above estimate.
\end{proof}

\begin{remark}\label{observacionentropiainfinitas}
	Let $\gamma,r\in\R$, $\gamma,r>0$. For $N, k\in\N$ and $n=\dim \mathcal{T}_{N}$, let 
	\[
A_{N,k}=-\frac{1}{n}\left(k\ln 2+\gamma \sum_{l=1}^{N}l^{r}d_{l} \right).
	\]
	 Since $d_{l}\asymp l^{d-1}$ and $n\asymp N^{d}$, then
	\[
	\sum_{l=1}^{N}l^{r}d_{l}\asymp\sum_{l=1}^{N}l^{r+d-1}\asymp \int_{0}^{N}x^{d+r-1}dx \asymp N^{d+r},
	\]
	and hence
	\begin{equation}\label{ecuacioninfinitasentropia1}
	A_{N,k}\asymp g(N),
	\end{equation}
	where $g(x)=-kx^{-d}-x^{r}$. The function $g$ assumes its absolute maximum at the point $x_{k}=(d/r)^{1/(d+r)}k^{1/(d+r)}$. Since for all $r>0$ and $x>1$, we have $-d  x^{-d-1}k\leq k[(x+1)^{-d}-x^{-d}]\leq 0 $ and $0\leq (x+1)^{r}-x^{r}\leq r x^{r-1}$, then $g(x+1)\geq  g(x)-rx^{r-1}$.	Thus, since $g$  is decreasing for $x>x_{k}$, it follows that
	\[
	g(x)-rx^{r-1}\leq g(x+1)\leq g(x+t)\leq g(x), \hspace{2ex} x\geq x_{k},\hspace{1ex}  0\leq t \leq 1.
	\]
	If $\overline{N}\in\N$, $x_{k}\leq \overline{N}\leq x_{k}+1$, then 
	\[
		g(x_{k})-rx_{k}^{r-1}\leq g(\overline{N})\leq \sup_{N}g(N)\leq g(x_{k})
		\]
	and thus
	\[
	-C_{1}k^{r/(d+r)}-C_{2}k^{(r-1)/(d+r)}\leq \sup_{N}g(N)\leq -C_{1}k^{r/(d+r)},
	\]
	with $C_{1}=(d+r)d^{-d/(d+r)}r^{-r/(d+r)}$ and $C_{2}=r^{(d+1)/(d+r)}d^{(r-1)/(d+r)}$. Therefore, from (\ref{ecuacioninfinitasentropia1}) we get 
	\[
	\sup_{N}A_{N,k}\asymp \sup_{N}g(N)\asymp k^{r/(d+r)}.
	\]
\end{remark}
\begin{proof}[\textup{\textbf{Proof of Theorem \ref{teoremainfinitasentropia}}.}]
	First we will prove (\ref{ecuacioninfinitasentropia2}). Note that
	\begin{equation*}
	2^{-k/n}\left(\prod_{l=1}^{N}\abs{\lambda(l)}^{d_{l}} \right)^{1/n}= e^{-\left(k\ln 2+\gamma \sum_{l=1}^{N}l^{r} d_{l}\right)/n}=e^{A_{N,k}},
	\end{equation*}
	where $n=\dim \mathcal{T}_{N}$. Then by Theorem \ref{teoremainferioressentropia} it follows that
	\begin{equation}\label{ecuacioninfinitasentropia3}
	e_{k}\left(\Lambda^{(2)}U_{p},L^{q}\right)\gg e^{A_{N,k}}\mathcal{V}_{n}.
	\end{equation}
	From Proposition \ref{proposicionaplicaciones1} we get 
	\[
	\sum_{l=1}^{N}l^{r}d_{l}\leq \sum_{l=1}^{N}\left(\frac{2^{1-d}\pi^{d/2}}{\Gamma(d/2)}l^{d+r-1}+C_{1}l^{\theta+r}\right).
	\]
    Taking  $f(x)=(2^{1-d}\pi^{d/2}/\Gamma(d/2)) x^{d+r-1}+C_{1}x^{\theta+r}$ we get 
	\begin{equation*}
		\sum_{l=1}^{N}l^{r}d_{l}\leq \sum_{l=1}^{N}f(l)\leq \int_{1}^{N+1}f(x)dx\leq \frac{2^{1-d}\pi^{d/2}}{\Gamma(d/2)(d+r)}(N+1)^{d+r}+C_{2}(N+1)^{\theta+r+1}.
	\end{equation*}
   	Again, by Proposition \ref{proposicionaplicaciones1} we have that $n\geq \bigl(2^{1-d}\pi^{d/2}/d\Gamma(d/2)\bigl)N^{d}$ and therefore
	\begin{align*}
	\frac{\gamma}{n}\sum_{l=1}^{N}l^{r}d_{l}&\leq \frac{d\Gamma(d/2)\gamma}{2^{1-d}\pi^{d/2}N^{d}}\left(\frac{2^{1-d}\pi^{d/2}}{\Gamma(d/2)(d+r)}(N+1)^{d+r}+C_{2}(N+1)^{\theta+r+1} \right)\\
	&=\frac{d\gamma}{(d+r)}\left(1+\frac{1}{N}\right)^{d}(N+1)^{r}+C_{3}\left(1+\frac{1}{N}\right)^{d}(N+1)^{\theta+r+1-d}.
	\end{align*}
	But $(1+1/N)^{d}=1+b_{1}/N+b_{2}/N^{2}+\cdots+1/N^{d}\leq (1+C_{4}/N)$, then
	\begin{equation*}
	\frac{\gamma}{n}\sum_{l=1}^{N}l^{r}d_{l}\leq \frac{d\gamma}{(d+r)}(N+1)^{r}+C_{5}\left(1+\frac{1}{N}\right)^{r}N^{r-1}+C_{6}(N+1)^{\theta+r+1-d}\leq \frac{d\gamma}{(d+r)}(N+1)^{r}+C_{7},
	\end{equation*}
if $0\leq r\leq d-\theta-1$. For $d\geq 4$, $\theta=d-2$ and thus we must have $0<r\leq 1$, for $d=2$, $\theta=132/208$ and thus $0<r\leq 76/208$, and for $d=3$ we must have $0<r\leq 11/16$.
Since	$(N+1)^{r}=N^{r}( 1+N^{-1})^{r}\leq N^{r}+C_{r}N^{r-1}\leq N^{r}+C_{8}$, we get
	\begin{equation}\label{ecuacioninfinitasentropia4}
	\frac{\gamma}{n}\sum_{l=1}^{N}l^{r}d_{l}\leq \frac{d\gamma}{d+r}(N^{r}+C_{8})+C_{7}=\frac{d\gamma}{d+r}N^{r}+C_{9}.
	\end{equation}
	From Proposition \ref{proposicionaplicaciones1} and (\ref{ecuacioninfinitasentropia4}), we obtain
	\begin{equation}\label{ecuacioninfinitasentropia6}
A_{N,k}\geq -\frac{d\Gamma(d/2)(\ln 2)k}{2^{1-d}\pi^{d/2}}N^{-d}-\frac{d\gamma}{d+r}N^{r}-C_{9}.
	\end{equation}
	Consider the function
	\[
		g(x)= -\frac{d\Gamma(d/2)(\ln 2)k}{2^{1-d}\pi^{d/2}}x^{-d}-\frac{d\gamma}{d+r}x^{r}-C_{9}.
	\]
	The absolute maximum value of the function $g$ is assumed at the point
	\[
	x_{k}=\left(\frac{(d+r)d\Gamma(d/2)(\ln 2)}{2^{1-d}r\gamma\pi^{d/2}} \right)^{1/(d+r)}k^{1/(d+r)}.
		\]
	We can show, as in Remark \ref{observacionentropiainfinitas}, that there is a constant $C_{10}$ such that 
	\[
	g(x)-C_{10}\leq g(x+t)\leq g(x), \hspace{2ex} x\geq x_{k}, \hspace{2ex} 0\leq t \leq 1
	\]
	and hence  
	\[
	g(x_{k})-C_{10}\leq \sup_{N}g(N)\leq g(x_{k}).
	\]
	 Therefore, it follows from (\ref{ecuacioninfinitasentropia6}) that
	\[
	\sup_{N}A_{N,k}\geq \sup_{N}g(N)\geq g(x_{k})-C_{10}.
	\]
	But 
	\[
	g(x_{k})=-\gamma^{d/(d+r)}\biggl( \frac{(d+r)2^{d-1}d\Gamma(d/2)(\ln 2)}{r\pi^{d/2}}\biggl)^{r/(d+r)}k^{r/(d+r)}-C_{9}\\
		=-\mathcal{C}k^{r/(d+r)}-C_{9},
	\]
	 and thus 
	 \begin{equation}\label{ecu414}
	 \sup_{N}A_{N,k}\geq -\mathcal{C}k^{r/(d+r)}-C_{11}.
	 \end{equation}
	Then, by (\ref{ecuacioninfinitasentropia3}) 
	\[
	e_{k}\left(\Lambda^{(2)}U_{p},L^{q}\right)\gg e^{-\mathcal{C}k^{r/(d+r)}}\mathcal{V}_{n}.
	\]
 Let $\overline{N}\in\N$, $\overline{N}\in(x_{k}-1,x_{k}+1)$ such that $\sup_{N}g(N)=g(\overline{N})$. From Remark \ref{observacionentropiainfinitas}, $\overline{N}\asymp k^{1/(d+r)}$, thus $\overline{n}=\dim \mathcal{T}_{\overline{N}}\asymp\overline{N}^{d}\asymp k^{d/(d+r)}$, and hence $\log_{2}n\asymp  \log_{2}k$. Therefore
	\[
	e_{k}\left(\Lambda^{(2)}U_{p},L^{q}\right)\gg e^{-\mathcal{C}k^{r/(d+r)}}\mathcal{V}_{k},
	\]
	concluding the proof of (\ref{ecuacioninfinitasentropia2}).

 Now, we will prove (\ref{ecuacioninfentropia}). From Remark \ref{Remark36} we have that $\Lambda^{(2)}\in K_{\epsilon,2}$ for $0<\epsilon< [2r-\delta(d-r)]/r$ and any $0< \delta< 2r/(d+r)$.	
 From Remark \ref{Remark36}, there is a positive constant $C_{12}$ such that, $\theta_{N_{k},N_{k+1}}\leq C_{12}2^{(\delta(d-r)/r)k}N^{d-r}$ for all $k,N \in \N$ and since	 $M\leq \epsilon^{-1}\log_{2}\theta_{N_{1},N_{2}}\leq C_{13}\log_{2}N$, then  $
	\log_{2}\theta_{N_{k},N_{k+1}}\leq C_{14}\log_{2}N\leq C_{15}\log_{2}n$ for $1\leq k\leq M$.
	Therefore
	\begin{equation}\label{supinfinitasentropia1}
	\sup_{1\leq k\leq M}\left(\log_{2}\theta_{N_{k},N_{k+1}}\right)^{1/2}\leq C_{16}(\log_{2}n)^{1/2}.
	\end{equation}
	For $2\leq q \leq \infty$, we obtain from (\ref{ecuacionentropiasup2})
	\begin{align*}
	\chi_{k}&\leq C_{17} \sup_{N\geq 1}\left(2^{-k}\right)^{1/n}\left(\prod^{N}_{j=1}\abs{\lambda(j)}^{d_{j}} \right)^{1/n}\left\{
	\begin{array}{ll}
	q^{1/2},& 2\leq q<\infty,\\
	(\log_{2}n)^{1/2},& q=\infty,
	\end{array}
	\right.\\
	&=C_{17}\sup_{N}e^{A_{N,k}}\left\{
	\begin{array}{ll}
	q^{1/2},& 2\leq q<\infty,\\
	(\log_{2}n)^{1/2},& q=\infty.
	\end{array}
	\right.
	\end{align*}
	From (\ref{ecuacionentropiasup2}) we also have $\chi_{k}\geq C_{18}\sup_{N}e^{A_{N,k}}$ and since $n\asymp N^{d}$, then
	\begin{equation}\label{supinfinitasentropia2}
		e^{\sup_{N}A_{N,k}}\ll \chi_{k}\ll \left\{
	\begin{array}{ll}
	e^{\sup_{N}A_{N,k}}q^{1/2},& 2\leq q<\infty,\\
	e^{\sup_{N}\left( A_{N,k}+ (\ln(\log_{2}N)) /2\right)},& q=\infty.
	\end{array}
	\right.
	\end{equation}
Consider $2\leq q <\infty$. If $f(x)=(2^{1-d}\pi^{d/2}/\Gamma(d/2))x^{d+r-1}-Cx^{\theta+r}$, then by Proposition \ref{proposicionaplicaciones1} it follows that
	\begin{equation}\label{supinfinitasentropia3}
	\sum_{l=1}^{N}l^{r}d_{l} \geq \sum_{l=1}^{N}f(l)\geq \int_{0}^{N}f(x)dx=\frac{2^{1-d}\pi^{d/2}}{\Gamma(d/2)(d+r)}N^{d+r}-\frac{C}{\theta+r+1} N^{\theta+r+1}.
	\end{equation}
The supremum $\sup_{N}A_{N,k}$ is assumed in $N\in \N$ satisfying	 $N\asymp k^{1/(d+r)}$ (see Remark \ref{observacionentropiainfinitas}). Again, from Proposition \ref{proposicionaplicaciones1} and since $0<r\leq 1$, we get
	\begin{equation}\label{supinfinitasentropia4}
	\frac{k}{n}\geq \frac{k}{F N^{d}}-\frac{\overline{C}k}{F^{2}N^{d+1}}\geq \frac{k}{FN^{d}}-C_{19}k^{(r-1)/(d+r)}\geq \frac{d \Gamma(d/2)k}{2^{1-d}\pi^{d/2}}N^{-d}-C_{19}.
	\end{equation}
Now, using Proposition \ref{proposicionaplicaciones1} and (\ref{supinfinitasentropia3}), 
	\begin{equation}\label{supinfinitasentropia5}
	\frac{\gamma}{n}\sum_{l=1}^{N}l^{r}d_{l}\geq \biggl(\frac{\gamma}{F N^{d}} -\frac{\overline{C}\gamma}{F^{2}N^{d+1}}\biggl)\biggl(\frac{2^{1-d}\pi^{d/2}}{(d+r)\Gamma(d/2)}N^{d+r}-\frac{C}{\theta+r+1}N^{\theta+r+1} \biggl) \geq\frac{d\gamma}{(d+r)}N^{r}-C_{20},
	\end{equation}
	since $0<r\leq d-1-\theta$. Hence, from (\ref{supinfinitasentropia4}) and (\ref{supinfinitasentropia5}) we get
	\begin{equation}\label{supinfinitasentropia6}
A_{N,k}=-\frac{k}{n}\ln 2 -\frac{\gamma}{n}\sum_{l=1}^{N}l^{r}d_{l}\leq- \frac{d\Gamma(d/2)(\ln 2)k}{2^{1-d}\pi^{d/2}}N^{-d}-\frac{d\gamma}{d+r}N^{r}+C_{21}.
	\end{equation}
	Consider the function $g_{1}(x)= g(x)+C_{9}+C_{21}$. The absolute maximum value of  $g_{1}$ is assumed at the  point $
	x_{k}$ 	and therefore
	\begin{equation*}
	\sup_{N}A_{N,k}\leq \sup_{N}g_{1}(N)\leq g_{1}(x_{k})=-\mathcal{C} k^{r/(d+r)}+C_{21}.
	\end{equation*}
	Then, by (\ref{supinfinitasentropia2}),
	\begin{equation}\label{supinfinitasentropia7}
	\chi_{k}\ll e^{-\mathcal{C}k^{r/(d+r)}}q^{1/2},\hspace{3ex}  2\leq q<\infty.
	\end{equation}
	For $q=\infty$, from  (\ref{supinfinitasentropia2}) and (\ref{supinfinitasentropia6}) we obtain
	\begin{equation*}
	\chi_{k}\ll e^{\sup_{N}\left( A_{N,k}+(\ln(\log_{2}N))/2\right)}\leq \sup_{N} e^{-\frac{d\Gamma(d/2)(\ln 2)k}{2^{1-d}\pi^{d/2}}N^{-d}-\frac{d\gamma}{d+r}N^{r}+\frac{1}{2}\ln(\log_{2}N)+C_{21}}.
	\end{equation*}
Consider now the function $g_{2}(x)=g_{1}(x)+\left(\ln(\log_{2}x)\right)/2$.
	As in Lemma \ref{lemafinitaentropia}, we can show that the absolute maximum value of $g_{2}$ is assumed at a point $\overline{x}_{k}$ satisfying $\overline{C}_{1}x_{k}\leq \overline{x}_{k}\leq \overline{C}_{2}x_{k}$ and hence $\sup_{N}g_{2}(N)$ is obtained when $N\asymp \bar{x}_{k}\asymp x_{k}\asymp k^{1/(d+r)}$. Therefore, for $q=\infty$, we have that
	\begin{equation}\label{ecuentrop1}
	\chi_{k}\ll \sup_{N}\left( e^{A_{N,k}}(\log_{2}N)^{1/2}\right)\ll \sup_{N}e^{A_{N,k}}\left( \log_{2}k^{1/(d+r)}\right)^{1/2}\ll e^{-\mathcal{C} k^{r/(d+r)}}(\log_{2}k)^{1/2},
	\end{equation}
	and by (\ref{ecu414}), (\ref{supinfinitasentropia2}), (\ref{supinfinitasentropia7}) and (\ref{ecuentrop1}) it follows that
	\begin{equation}\label{supinfinitasentropia8}
	e^{-\mathcal{C}k^{r/(d+r)}}\ll\chi_{k}\ll e^{-\mathcal{C}k^{r/(d+r)}}\left\{
	\begin{array}{ll}
	q^{1/2},& 2\leq q<\infty,\\
	(\log_{2}k)^{1/2},& q=\infty.
	\end{array}
	\right.
	\end{equation}

Fixed $k\in\N$, let $N\in\N$ such that
	\[\left(\frac{\mathcal{C}}{\gamma}\right)^{1/r}k^{1/(d+r)}\leq N <\left(\frac{\mathcal{C}}{\gamma}\right)^{1/r}k^{1/(d+r)}+1 .\]
		Then 
    \[
    \mathcal{C}k^{r/(d+r)}\leq \gamma N^{r}\leq\mathcal{C}k^{r/(d+r)} +C'k^{(r-1)/(d+r)}\leq \mathcal{C}k^{r/(d+r)}+C' 
    \]
	and thus 
	\begin{equation}\label{ecu424}
	    \overline{C}e^{-\mathcal{C}k^{r/(d+r)}}\leq \lambda(N)\leq e^{-\mathcal{C}k^{r/(d+r)}}
	\end{equation}
	for positive constants $C'$ and $\overline{C}$. Since by (\ref{supinfinitasentropia8}) $\chi_{k}\asymp e^{-\mathcal{C} k^{r/(d+r)}}$, there are positive constants $D$ and $\overline{D}$ such that $D\chi_{k}\leq \lambda(N)\leq \overline{D}\chi_{k} $. Let $b\in\N_{0}$ such that $D\geq 2^{-b}$. Therefore we have $\lambda(N)\geq 2^{-b}\chi_{k}$ 	and since $\Lambda^{(2)}\in K_{\epsilon,2}$, for some $\epsilon>0$, applying Theorem \ref{teoremasuperioresentropia} we obtain
		\begin{align*}
		e_{\eta+bn}\left(\Lambda^{(2)} U_{p},L^{q}\right)&\ll \abs{\lambda(N)}\left(\left\{
		\begin{array}{ll}
			q^{1/2},&\textbf{$2\leq p\leq \infty$, $2\leq q<\infty$},\\
			\sup_{1\leq j\leq M}(\log_{2}\theta_{N_{j},N_{j+1}})^{1/2},&\textbf{$2\leq p\leq \infty$, $q=\infty$},
		\end{array}
		\right.\right. \nonumber\\
		&+   \left.\sum_{j=M+1}^{\infty}2^{-j}\theta^{1/2-1/q}_{N_{j},N_{j+1}}\right).
	\end{align*}
     But from Remark \ref{Remark36}, $\sum_{k=M+1}^{\infty}2^{-k} \theta_{N_{k},N_{k+1}}^{1/p-1/q} \ll 1$ and $\log_{2}\theta_{N_{j},N_{j+1}}\ll\log_{2}n$ and since $\log_{2}n\asymp \log_{2}N^{d}\asymp \log_{2}k^{d/(d+r)}\asymp \log_{2}k$, hence by (\ref{ecu424}) 		\[
		e_{\eta+bn}\left(\Lambda^{(2)} U_{p},L^{q}\right)\ll e^{-\mathcal{C} k^{r/(d+r)}}\left\{
	\begin{array}{ll}
	q^{1/2},&\textbf{$2\leq p\leq \infty$, $2\leq q<\infty$},\\
	(\log_{2}k)^{1/2},&\textbf{$2\leq p\leq \infty$, $q=\infty$}.
	\end{array}
	\right.
	\]
	Considering the result of Theorem \ref{teoremasuperioresentropia} for $1\leq q <2$, we obtain
	\begin{equation}\label{supinfinitasentropia9}
 e_{\eta+bn}\left(\Lambda^{(2)} U_{p},L^{q}\right)\ll e^{-\mathcal{C} k^{r/(d+r)}}\left\{
	\begin{array}{ll}
	q^{1/2},&\textbf{$2\leq p\leq \infty$, $1\leq q<\infty$},\\
	(\log_{2}k)^{1/2},&\textbf{$2\leq p\leq \infty$, $q=\infty$}.
	\end{array}
	\right.
	\end{equation}
	From Remark \ref{observacion-superiores-1} and remembering that $N\asymp k^{1/(d+r)}$, we get
	\begin{equation*}
	\eta+bn=k+bn+\sum_{j=1}^{M}m_{j}\leq k+C_{22}N^{d}+C_{23}N^{d-r}\leq k+C_{24}k^{d/(d+r)}.
	\end{equation*}
	Now, from properties of entropy numbers and (\ref{supinfinitasentropia9}), we obtain
	\begin{equation}\label{supinfinitasentropia10}
	e_{[k+C_{24}k^{d/(d+r)}]}\left(\Lambda^{(2)} U_{p},L^{q}\right)\ll e^{-\mathcal{C} k^{r/(d+r)}}\left\{
	\begin{array}{ll}
	q^{1/2},&\textbf{$2\leq p\leq \infty$, $1\leq q<\infty$},\\
	(\log_{2}k)^{1/2},&\textbf{$2\leq p\leq \infty$, $q=\infty$}.
	\end{array}
	\right.
	\end{equation}
	Let $\xi_{k}=k+C_{24}k^{d/(d+r)}$. Then
	\begin{align*}
	\mathcal{C}\left(-k^{r/(d+r)}+\xi^{r/(d+r)}_{k}\right)	&=\mathcal{C} k^{r/(d+r)}\left( \frac{rC_{24}}{(d+r)}k^{-r/(d+r)}-\frac{rd C_{24}^{2}}{2(d+r)^{2}}k^{-2r/(d+r)}+\cdots\right)\\
	&\leq \mathcal{C} \frac{rC_{24}}{(d+r)}= C_{25}
	\end{align*}
	and therefore 
	\[
	e^{-\mathcal{C} k^{r/(d+r)}}\leq e^{C_{25}}e^{-\mathcal{ C}\xi_{k}^{r/(d+r)}}\ll e^{-\mathcal{C}\xi_{k}^{r/(d+r)}}.
	\]
	 Then from (\ref{supinfinitasentropia10}) 
	\begin{equation}\label{supinfinitasentropia11}
		e_{[\xi_{k}]}\left(\Lambda^{(2)} U_{p},L^{q}\right)\ll e^{-\mathcal{C} \xi_{k}^{r/(d+r)}}\left\{
	\begin{array}{ll}
	q^{1/2},&\textbf{$2\leq p\leq \infty$, $1\leq q<\infty$},\\
	(\log_{2}k)^{1/2},&\textbf{$2\leq p\leq \infty$, $q=\infty$}.
	\end{array}
	\right.
	\end{equation}
	We observe that
	\begin{equation*}
		\xi_{k}^{r/(d+r)}=k^{r/(d+r)}\biggl (1+\frac{r C_{24}}{d+r}k^{-r/(d+r)} -\frac{rdC_{24}^{2}}{2(d+r)^{2}}k^{-2r/(d+r)}+\cdots\biggl),
	\end{equation*}
	and hence
	\[
\xi_{k}^{r/(d+r)}\geq k^{r/(d+r)}\left (1+\frac{r C_{24}}{d+r}k^{-r/(d+r)} -\frac{rdC_{24}^{2}}{2(d+r)^{2}}k^{-2r/(d+r)}\right)
	\]
	and
	\[
	\xi_{k+1}^{r/(d+r)}\leq (k+1)^{r/(d+r)}\biggl (1+\frac{r C_{24}}{d+r}(k+1)^{-r/(d+r)} \biggl).
	\]
	Since $0<(k+1)^{r/(d+r)}-k^{r/(d+r)}\leq r/(d+r)$, it follows that
	\begin{equation*}
	0\leq \xi_{k+1}^{r/(d+r)}-\xi_{k}^{r/(d+r)}
		\leq (k+1)^{r/(d+r)}-k^{r/(d+r)}+C_{26}\leq C_{27}
	\end{equation*}
	and thus
	\begin{equation}\label{supinfninitasentropia14}
	1\leq \frac{e^{-\mathcal{C}\xi_{k}^{r/(d+r)}}}{e^{-\mathcal{C}\xi_{k+1}^{r/(d+r)}}}\leq C_{28}.
	\end{equation}
	Finally, let $l$ an integer satisfying $[\xi_{k}]\leq l\leq [\xi_{k+1}]$. Then from properties of entropy numbers, from (\ref{supinfinitasentropia11}) and (\ref{supinfninitasentropia14}) we obtain
	\begin{align*}
	e_{l}\left(\Lambda^{(2)}U_{p},L^{q}\right)&\leq e_{[\xi_{k}]}\left(\Lambda^{(2)}U_{p},L^{q}\right)\ll e^{-\mathcal{C} \xi_{k}^{r/(d+r)}}\left\{
		\begin{array}{ll}
			q^{1/2},&\textbf{$2\leq p\leq \infty$, $1\leq q<\infty$},\\
			(\log_{2}k)^{1/2},&\textbf{$2\leq p\leq \infty$, $q=\infty$}.
		\end{array}
		\right.\\
	&\leq e^{-\mathcal{C} l^{r/(d+r)}}\left\{
		\begin{array}{ll}
			q^{1/2},&\textbf{$2\leq p\leq \infty$, $1\leq q<\infty$},\\
			(\log_{2}l)^{1/2},&\textbf{$2\leq p\leq \infty$, $q=\infty$},
		\end{array}
		\right.
	\end{align*}
	concluding the proof of (\ref{ecuacioninfentropia}).
\end{proof}

\begin{remark}\label{nuevaobser}
 Let $B_R^{*}=\{\textup{\textbf{x}}\in \mathbb{R}^d : \abs{\textup{\textbf{x}}}_{*} \leq R\}$ and  $N_d^{*}$  
the number of points of integer coordinates contained in the ball $B_R^{*}$. 
We have that $N_d^{*}(l)=(2l+1)^{d}$ and $\# A_l^{*}=(l+1)^{d}$. Then there is a constant $C$, such that, for all $l, N \in \mathbb{N}$
$$
dl^{d-1} \leq d_l^{*} \leq dl^{d-1} + Cl^{d-2}
\hspace{2ex}\textup{and}\hspace{2ex}
N^d \leq \dim{{\cal T}_N^*}\leq N^d + C N^{d-1}.
$$

Considerer the multiplier operators $\Lambda_{*}^{(1)}$ and $\Lambda_{*}^{(2)}$ defined in the introduction. We note that the Theorem \ref{teorema-media-1} was proved in \cite{Sergio} also for operators of this type. Through minor modifications in the proofs in this paper we can show that Theorem \ref{teoremainferioressentropia} and Theorem \ref{teoremasuperioresentropia} also hold if we change $\mathcal{H}_{l}$, $\mathcal{T}_{N}$, $d_{l}$ and $\lambda_{\textup{\textbf{k}}}$ by  $\mathcal{H}^{*}_{l}$, $\mathcal{T}^{*}_{N}$, $d^{*}_{l}$ and $\lambda^{*}_{\textup{\textbf{k}}}$ and Theorem \ref{aplicacionesentropiafinitas1} also hold if we change $\Lambda^{(1)}$ by $\Lambda_{*}^{(1)}$ and Theorem \ref{teoremainfinitasentropia}  hold if we change $\Lambda^{(2)}$ by $\Lambda_{*}^{(2)}$ and the constant $\mathcal{C}$ by the constant $\mathcal{C}_{*}$ given in the introduction. The Theorem \ref{teoremainfinitasentropia} is valid in this case for $0<r \leq 1$, for all $d\in \mathbb{N}$.
\end{remark}

\begin{remark} \label{examples}
\label{remark2} Consider the real $l$-dimensional unit sphere ${\mathbb{S}}
^{l}$ in $\mathbb{R}^{l+1}$, that is, the set of all $x\in \mathbb{R}^{l+1}$
such that $x_1^2 +\cdots + x_{l+1}^2=1$, endowed with the normalized
Lebesgue measure $dx$. Walsh functions on ${\mathbb{S}}^{l}$ were introduced
in \cite{Bordin}.
The set $\left\{\zeta_{n}\right\}_{n\in \mathbb{N}}$ of the Walsh functions on ${\mathbb{S}}^{l}$ is a complete
orthonormal subset of real-valued functions of $L^{2} \left(\mathbb{S}^{l}\right)$
and $|\zeta_{n}(x)|=1$ for all $n\in \mathbb{N}$ and $x \in \mathbb{S}^{l} $ (see \cite{Bordin,Sergio}).

Now, consider the interval $[0, 2\pi ] $ with the normalized Lebesgue measure $(1/(2\pi)) dt$ and for each $x \in [0, 2\pi]$ let
$\varphi_{0} (x) = 1$, $\varphi_{2k} (x) = {\sqrt{2}}\cos (kx)$, $\varphi_{2k-1}(x) = \sqrt{2}\sin (kx)$, for all $k \in \mathbb{N}$.
The trigonometric system $\{\varphi_{m}\}_{m\in \N_{0}}$ is a complete orthonormal subset of $L^{2}([0, 2\pi])$, of real-valued functions, uniformly bounded in \linebreak$L^{\infty}([0,2\pi])$.

For each $1\leq j\leq d$, let $\left\{\phi_{l}^{(j)}\right\}_{l\in \N_{0}}$ be a Vilenkin system 
$\left\{ Z_{n} \right\}_{n\in\N_{0}}$  of $L^{2}\left(\G\right)$, or the Walsh system of $L^{2} \left(\mathbb{S}^{l}\right)$ 
for some $l \geq 2$, or the trigonometric system $\{\varphi_{m}\}_{m\in \N_{0}}$ of $L^{2}([0, 2\pi])$. Consider the complete 
orthonormal system $\left\{\phi_{\textup{\textbf{m}}}\right\}_{\textup{\textbf{m}}\in \N_{0}^{d}}$ of $L^{2}(\Omega)$ 
as in the Proposition \ref{proposi1}. Then all the results in this paper are true for this system.
\end{remark}


\end{document}